\newcommand{\force}{\Vdash}
\title{The extent of saturation of induced ideals}
\author{Kenta Tsukuura}
\address{Doctoral Program in Mathematics, Degree Programs in Pure and Applied Sciences, Graduate School of Science and Technology, University of Tsukuba, Tsukuba, 305-8571, Japan}
\email{tukuura@math.tsukuba.ac.jp}
\subjclass[2020]{03E35, 03E40, 03E55}
\keywords{Saturated ideal, universal collapse, Levy collapse, almost-huge cardinal, huge cardinal}
\thanks{This research was supported by Grant-in-Aid for JSPS Research Fellow Number 20J21103. The author is grateful to Masahiro Shioya for helpful discussions.}
\date{}
\theoremstyle{plain}
\newtheorem{thm}{Theorem}[section]
\newtheorem{defi}[thm]{Definition}
\newtheorem{lem}[thm]{Lemma}
\newtheorem{coro}[thm]{Corollary}
\newtheorem{ques}[thm]{Question}
\newtheorem{clam}[thm]{Claim}
\newtheorem{rema}[thm]{Remark}
\newtheorem{prop}[thm]{Proposition}
\begin{document}
\maketitle

\begin{abstract}
 We construct a model with a saturated ideal ${I}$ over $\mathcal{P}_{\kappa}\lambda$ and study the extent of saturation of $I$.
\end{abstract}
\section{Introduction}
The existence of a saturated ideal on a successor cardinal is a kind of generic large cardinal axioms. The first model with a saturated ideal on $\aleph_1$ was constructed by Kunen~\cite{MR495118}. He established
\begin{thm}[Kunen]\label{kunen}
 Suppose that $j$ is a huge embedding with critical point $\kappa$. Then there is a poset $P$ such that $P \ast \dot{S}(\kappa,j(\kappa))$ forces that $\aleph_1$ carries a saturated ideal.
\end{thm}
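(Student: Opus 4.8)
The plan is to make $\kappa$ into $\aleph_1$ and $\lambda := j(\kappa)$ into $\aleph_2$ by collapsing, and then to lift $j$ to the extension so that the lifted embedding induces a saturated ideal on $\kappa$. First I would take $P$ to be a universal/Levy-style collapse making $\kappa=\aleph_1$, and let $\dot{S}(\kappa,\lambda)$ be the Silver collapse which, in $V^{P}$, turns every cardinal in $[\kappa,\lambda)$ into $\kappa$, so that $\lambda$ becomes $\aleph_2$. The whole forcing $P \ast \dot{S}(\kappa,\lambda)$ has size $\lambda$; since $\lambda = j(\kappa)$ and ${}^{\lambda}M\subseteq M$, the inner model $M$ computes $P \ast \dot{S}(\kappa,\lambda)$ correctly, and by elementarity $j(P \ast \dot{S}(\kappa,\lambda))$ factors as $(P \ast \dot{S}(\kappa,\lambda)) \ast \dot{\mathbb{R}}$ for a tail forcing $\dot{\mathbb{R}}$ (here $S$ is chosen precisely so that this factoring is available).

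Next, given $G \ast H$ generic for $P \ast \dot{S}(\kappa,\lambda)$ over $V$, I would lift $j$ by a master-condition argument. Because $j$ restricted to the first factor is essentially the identity and the tail $\mathbb{R}$ is sufficiently closed in $M[G \ast H]$ (this is exactly the design feature of the Silver collapse), the pointwise image $j{}''(G \ast H)$ generates a filter that extends to an $\mathbb{R}$-generic $K$ over $M[G \ast H]$ with $j{}''(G \ast H) \subseteq G \ast H \ast K$. Forcing with $\mathbb{R}$ over $V[G \ast H]$ then produces $\hat{\jmath}\colon V[G \ast H] \to M[G \ast H \ast K]$ extending $j$, with $\mathrm{crit}(\hat{\jmath}) = \kappa$ and $\hat{\jmath}(\kappa)=\lambda$. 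With this in hand I define, in $V[G \ast H]$, the induced ideal
$$ I = \{\, A \subseteq \kappa : \force_{\mathbb{R}}\ \kappa \notin \hat{\jmath}(A)\,\}, $$
whose $\kappa$-completeness and normality come from $\mathrm{crit}(\hat{\jmath})=\kappa$ and the fact that $\kappa$ is the seed.

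The heart of the matter is saturation: I must show $\mathcal{P}(\kappa)/I$ has no antichain of size $\lambda = \aleph_2$. Since forcing with $\mathbb{R}$ adds a $V[G \ast H]$-generic ultrafilter $U = \{A : \kappa \in \hat{\jmath}(A)\}$ for $\mathcal{P}(\kappa)/I$ and the generic ultrapower is (a restriction of) $\hat{\jmath}$, the quotient $\mathcal{P}(\kappa)/I$ regularly embeds into the tail forcing $\mathbb{R}$. Concretely, a purported antichain $\langle A_\alpha : \alpha<\lambda\rangle$ of $I$-positive sets lies in $M[G \ast H]$ by ${}^{\lambda}M \subseteq M$, so I can apply $\hat{\jmath}$ and run the standard counting: pairwise $I$-disjointness forces incompatibility of the $\hat{\jmath}$-images, and the chain condition of $\mathbb{R}$ (again read off from the Silver-collapse structure together with ${}^{\lambda}M\subseteq M$) caps the antichain below $\lambda$, giving $\aleph_2$-saturation.

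The main obstacle I anticipate is precisely this last step. Both the existence of the master condition for the lift and the chain-condition bound that yields saturation hinge on the fine structure of $\dot{S}(\kappa,\lambda)$; this is exactly why one uses the Silver collapse rather than the plain Levy collapse, whose closure and factoring behavior would not simultaneously capture $j{}''(G \ast H)$ and keep antichains in $\mathcal{P}(\kappa)/I$ short. I would therefore spend most of the effort verifying that $S(\kappa,\lambda)$ at once (i) preserves $\kappa=\aleph_1$, (ii) sends $\lambda$ to $\aleph_2$, (iii) admits the master condition for the lift, and (iv) carries the chain condition needed for the saturation count.
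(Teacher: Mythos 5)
Your outline has the right skeleton — collapse below $\kappa$, absorb $P \ast \dot{S}(\kappa,j(\kappa))$ into $j(P)$, lift $j$ with a master condition obtained from $j``H$ (this is where hugeness, i.e.\ $j``H \in M$, and the uniform boundedness built into the Silver collapse are used), and read off the induced ideal — but the saturation step as you have written it fails. You embed $\mathcal{P}(\kappa)/I$ into the Boolean completion of the \emph{full} tail $\mathbb{R}$, where $(P \ast \dot{S}(\kappa,\lambda)) \ast \dot{\mathbb{R}} \simeq j(P) \ast \dot{S}(j(\kappa),jj(\kappa))^{M}$. That tail contains $S(j(\kappa),jj(\kappa))$, which collapses $jj(\kappa)$ and has antichains of every size below $jj(\kappa)$; no chain condition of $\mathbb{R}$ ``caps antichains below $\lambda$,'' so your count gives nothing like $\aleph_2$-saturation. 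The idea you are missing — the one carried out in this paper's proof of Theorem~\ref{maintheorem} — is that the name $\dot{U}$ for the generic ultrafilter must be arranged to depend only on the $j(P)$-generic $\overline{G}$: below the master condition one builds, inside $M[\overline{G}]$, a descending $j(\kappa)$-sequence $\langle s_\alpha \rangle$ in $S(j(\kappa),jj(\kappa))^{M[\overline{G}]}$ deciding the statements ``$\kappa \in j(\dot{A}_\alpha)$'' for an enumeration of all names for subsets of $\kappa$ (using the closure of the Silver collapse together with the closure of $M[\overline{G}]$), and sets $U = \{A \mid \exists \alpha \, (s_\alpha \force \kappa \in j(\dot{A}))\}$. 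The Silver collapse at $j(\kappa)$ is never actually forced with; it is only quantified over inside $M[\overline{G}]$. Then $A \mapsto ||A \in \dot{U}||$ lands in $\mathcal{B}(j(P)/G \ast H)$, and it is the $j(\kappa)$-c.c.\ of $j(P)$ — coming from the inaccessibility of $j(\kappa)$ and a $\Delta$-system argument, not from ``the Silver-collapse structure'' — that bounds the antichain.

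A second, lesser gap: the factorization $P \ast \dot{S}(\kappa,j(\kappa)) \lessdot j(P)$ is not ``by elementarity.'' It is a substantive absorption lemma and is the entire reason Theorem~\ref{kunen} says ``there is a poset $P$'' rather than naming the Levy collapse: Kunen's $P$ is built as a universal collapse precisely so that $j(P)$, the universal collapse at $j(\kappa)$ in $M$, absorbs $P \ast \dot{S}(\kappa,j(\kappa))$ with the identity on the first coordinate (the paper's replacement for this is the term-forcing projection of Lemmas~\ref{levyprojection} and~\ref{mainprojection} for the diagonal product). You should isolate this as the defining property of $P$; without it, $j``G \subseteq \overline{G}$ and the lifting of $j$ on the first coordinate have no justification. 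Note also that the paper itself states Theorem~\ref{kunen} without proof, citing Kunen; the comparison above is with the parallel argument the paper gives for Theorem~\ref{maintheorem}.
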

See Section 2 for the definitions of a saturated ideal and its strengthenings below.
 $S(\kappa,j(\kappa))$ denotes Silver collapse. The poset $P$, called the universal collapse, is useful in constructing a model with a saturated ideal because of its nice absorption property. Indeed, using the method of universal collapses each of the following was shown to be consistent:
\begin{enumerate}
 \item (Laver~\cite{MR673792}) $\aleph_1$ carries a strongly saturated ideal.
 \item (Foreman--Magidor--Shelah~\cite{MR942519}) $\aleph_1$ carries a layered ideal.
 \item (Foreman--Laver~\cite{MR925267}) $\aleph_1$ carries a centered ideal.
\end{enumerate}
Kunen's proof has been improved in two ways. One is due to Magidor. He used a sequence of local master conditions, while Kunen used a single master condition. Incorporating this improvement, Foreman--Magidor--Shelah proved (2), only assuming the existence of an almost-huge cardinal with target Mahlo, rather than a huge cardinal. The improvement enables us to use the Levy collapse instead of the Silver collapse, and weaken the assumption of Theorem \ref{kunen} to the existence of an almost-huge cardinal. The other improvement is due to Shioya~\cite{shioya}. He pointed out that the diagonal product of Silver collapses has a nice absorption property and works as $P$ in Theorem \ref{kunen}.

In this paper, we construct a model with a saturated ideal on the successor of a given regular cardinal, combining the two improvements. We also study the extent of saturation of our ideal. We will show that 
 \begin{thm}\label{maintheorem} Suppose that $j$ is an almost-huge embedding with critical point $\kappa$ and $\mu < \kappa \leq \lambda < j(\kappa)$ are regular cardinals. Then $P(\mu,\kappa) \ast \dot{\mathrm{Coll}}(\lambda,<j(\kappa))$ forces that there is a saturated ideal ${I}$ over $\mathcal{P}_\kappa\lambda$ with the following properties:
 \begin{enumerate}
  \item ${I}$ is $(\lambda^{+},\lambda^{+},<\mu)$-saturated.
  \item $I$ is not $(\lambda^{+},\mu,\mu)$-saturated. In particular, $I$ is not strongly saturated. 
  \item $I$ is layered if and only if $j(\kappa)$ is Mahlo in $V$.
  \item $I$ is not centered. In particular, $I$ is not strongly layered.
\end{enumerate}
\end{thm}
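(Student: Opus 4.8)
The plan is to realize $I$ as the ideal induced by a generic elementary embedding obtained by lifting $j$, and then to extract each of the four properties from the combinatorics of the quotient forcing $\mathcal P(\mathcal P_\kappa\lambda)/I$, which I will identify with a tail of $j\bigl(P(\mu,\kappa)\ast\dot{\mathrm{Coll}}(\lambda,<j(\kappa))\bigr)$. Write $P=P(\mu,\kappa)\ast\dot{\mathrm{Coll}}(\lambda,<j(\kappa))$ and let $G\ast H$ be $P$-generic, so that $\kappa=\mu^{+}$ and $j(\kappa)=\lambda^{+}$ in $V[G\ast H]$. Computing $j(P)=P(\mu,j(\kappa))\ast\dot{\mathrm{Coll}}(j(\lambda),<j(j(\kappa)))$ in $M$ and using the absorption property of the universal collapse (the diagonal product of Silver collapses), $P$ completely embeds into $P(\mu,j(\kappa))$, so $j(P)\cong P\ast\dot R$ for a tail $\dot R$ whose leading factor is the universal collapse on $[\kappa,j(\kappa))$ — a $<\mu$-closed poset all of whose conditions have support of size $<\mu$ — having absorbed the Levy collapse. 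Building a generic for $R$ over $V[G\ast H]$ together with a master condition via Magidor's local master conditions, and using ${}^{<j(\kappa)}M\subseteq M$, I lift $j$ to $\tilde\jmath\colon V[G\ast H]\to M[\tilde G]$ and set
\[ I=\{A\subseteq\mathcal P_\kappa\lambda : {\Vdash_R}\ j[\lambda]\notin\tilde\jmath(A)\}. \]
A duality argument identifies $\mathcal P(\mathcal P_\kappa\lambda)/I$ with $R$ below the master condition, and ${}^{<j(\kappa)}M\subseteq M$ together with $j(\kappa)=\lambda^{+}$ yields the $\lambda^{+}$-chain condition of $R$, i.e. that $I$ is saturated. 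Everything below is read off from the support structure of the universal-collapse factor of $R$.

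For (1) and (2) the key is that a condition of $R$ fixes fewer than $\mu$ coordinates. Given positive sets $\langle A_{\xi}:\xi<\lambda^{+}\rangle$ with witnesses $r_{\xi}\Vdash_R j[\lambda]\in\tilde\jmath(A_{\xi})$, I first thin out, using a $\Delta$-system argument and a pigeonhole on the conditions' values on the root, to a subfamily of size $\lambda^{+}$ whose witnesses are pairwise compatible with small support unions; then for any $e$ of size $<\mu$ the witnesses $\{r_{\xi}:\xi\in e\}$ amalgamate to a common lower bound (their supports union to a set of size $<\mu$ since $\mu$ is regular, and compatible conditions of the collapse with small-support union have a common extension), which forces $j[\lambda]\in\bigcap_{\xi\in e}\tilde\jmath(A_{\xi})=\tilde\jmath(\bigcap_{\xi\in e}A_{\xi})$, the last equality because $|e|<\mu\le\kappa=\mathrm{crit}(\tilde\jmath)$. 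This gives the $(\lambda^{+},\lambda^{+},<\mu)$-saturation of (1). For (2) I turn the same feature against itself: enumerating $\lambda^{+}$ coordinates of $R$ and letting $A_{\xi}$ be the positive set whose positivity is decided by coordinate $\xi$, any $\mu$-sized subfamily has full intersection that could be forced nonempty only by a single condition fixing $\mu$ coordinates, which is impossible; hence no $\mu$-subfamily is $\mu$-linked, so $I$ is not $(\lambda^{+},\mu,\mu)$-saturated and in particular not strongly saturated.

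For (3) I analyze the Levy factor of $R$. Presenting it as the increasing, continuous union of its restrictions $R\restriction\beta$ for $\beta<j(\kappa)$, the restriction $R\restriction\beta$ is a complete (regular) subforcing of size $\le\lambda$ exactly when $\beta$ is inaccessible in $V$; hence a continuous chain of small regular subalgebras cofinal in $R$ — i.e. a layering — exists if and only if the inaccessibles are stationary below $j(\kappa)$, that is, if and only if $j(\kappa)$ is Mahlo in $V$. For (4) I show $R$ is not centered: the relevant centeredness partition relation, applied to the coordinate family of (2), would produce a positive $\mu$-linked subfamily, contradicting the failure established there; alternatively one argues directly that no coloring of $R$ by $\lambda$ colors keeps every color class amalgamable, the obstruction again being that $\mu$ coordinates cannot be simultaneously fixed. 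Since strongly layered implies centered, $I$ is also not strongly layered.

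The main obstacle I anticipate is not the positive results but the two negative ones: establishing the duality identification of $\mathcal P(\mathcal P_\kappa\lambda)/I$ with $R$ precisely enough, and then pinning down the support structure of the universal-collapse factor sharply enough to rule out \emph{every} candidate condition (for (2)) and \emph{every} candidate coloring (for (4)), rather than merely exhibiting one good object as in (1) and the forward direction of (3).
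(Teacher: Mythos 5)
Your overall architecture---lifting $j$ with local master conditions, taking $I$ to be the induced ideal, identifying $\mathcal{P}(\mathcal{P}_\kappa\lambda)/I$ with the quotient of $j(P(\mu,\kappa))$ by $G\ast H$, and reading (1), (2) and the forward half of (3) off the $<\mu$-support structure of the diagonal product---is the paper's. Two points you gloss over are real work there: in (1) the common lower bound of $<\mu$ many witnesses must itself be a condition of the quotient, i.e.\ its projection must lie in $G\ast H$, which is exactly why the paper proves the projection of Lemma \ref{mainprojection} is \emph{continuous} ($\pi(\prod Z)=\prod\pi``Z$) and combines this with $(<\mu,\lambda)$-distributivity of the base in Lemma \ref{ccquotient}; and in the negative half of (3) the failure of completeness of the filtration must be witnessed \emph{inside the quotient} (conditions and incompatibility witnesses whose supports avoid $\lambda+1$, Lemma \ref{singularcase}), and ruling out \emph{every} layering rather than one filtration uses Lemma \ref{layeredchar}.

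The genuine gap is item (4); both mechanisms you propose fail. Centeredness is a \emph{finitary} property: a $\lambda$-centering only guarantees lower bounds for finite subsets of each colour class, so it yields the $(\lambda^{+},\lambda^{+},<\omega)$-c.c.\ and says nothing about $\mu$-sized subfamilies; hence the failure of $(\lambda^{+},\mu,\mu)$-saturation established in (2) is perfectly compatible with centeredness, and your partition-relation derivation does not exist. Your direct variant fails for the same reason: finitely many conditions with supports of size $<\mu$ have supports unioning to a set of size $<\mu$ (as $\mu$ is an infinite regular cardinal), so the support structure places no obstruction on finite amalgamation---indeed the paper's closing Remark shows that the factor $P_{*}$ of the quotient carrying all coordinates $\le\lambda$ \emph{is} $\lambda$-centered in the extension. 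The true obstruction sits in the factors $\mathrm{Coll}(\alpha,<j(\kappa))$ for regular $\alpha>\lambda$ and requires the dedicated Lemma \ref{levycentered2}: after forcing with $\mathrm{Coll}(\lambda,<j(\kappa))$ the ground-model $\mathrm{Coll}(\alpha,<j(\kappa))$ is not $\lambda$-centered, proved by locating a suitable singular $\delta$ where a putative centering would restrict, in an intermediate model, to a centering of $\mathrm{Coll}^{V}(\alpha,<\delta)$, contradicting the existence there of an antichain of size $\delta^{\mathrm{cf}(\delta)}\ge(\delta^{+})^{V}$ while $\lambda$-centeredness implies the $\lambda^{+}$-c.c. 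Without an argument of this kind your proof of (4) does not go through.
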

Here, $P(\mu,\kappa)$ is the diagonal product of Levy collapses. See Section 4 for the definition of $P(\mu,\kappa)$. Note that an ideal over $\kappa$ can be seen as an ideal over $\mathcal{P}_{\kappa}\kappa$. 

Modifying our proof of Theorem~\ref{maintheorem} we can show that
\begin{thm}
Let $I$ be the saturated ideal on $\aleph_1$ in the model of Theorem \ref{kunen}. Then the following hold:
\begin{enumerate}
 \item $I$ is $(\aleph_2,\aleph_2,<\aleph_0)$-saturated.
 \item $I$ is not $(\aleph_2,\aleph_0,\aleph_0)$-saturated. In particular, $I$ is not strongly saturated. 
 \item (Foreman--Magidor--Shelah~\cite{MR942519}) $I$ is layered.
 \item (Foreman--Laver~\cite{MR925267}) $I$ is not centered.
\end{enumerate}
\end{thm}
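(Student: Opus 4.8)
The plan is to run the argument of Theorem~\ref{maintheorem} essentially verbatim, specialised to the parameters $\mu=\aleph_0$ and $\kappa=\lambda=\aleph_1$, after replacing the two Levy-collapse factors $P(\mu,\kappa)$ and $\mathrm{Coll}(\lambda,<j(\kappa))$ by Kunen's universal collapse $P$ and the Silver collapse $S(\kappa,j(\kappa))$. First I would record that, under a huge embedding, $j(\kappa)$ is Mahlo (indeed measurable) in $V$: since ${}^{j(\kappa)}M\subseteq M$ we have $\mathcal P(j(\kappa))^{V}=\mathcal P(j(\kappa))^{M}$, and $M$ believes $j(\kappa)$ to be measurable, so $V$ and $M$ agree that $j(\kappa)$ is Mahlo. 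This is the only place where the strengthening from almost-huge to huge is used, and it makes the Mahlo hypothesis of \ref{maintheorem}(3) automatic.

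For (1) I would invoke the analysis underlying Theorem~\ref{kunen}, which identifies the quotient $\mathcal P(\aleph_1)/I$ with (a dense subset of) the Silver collapse $S(\kappa,j(\kappa))$ computed in the appropriate inner model, and then upgrade $\aleph_2$-saturation to finite linkedness by a sunflower argument. Given $\aleph_2$ conditions, each a partial function of support $<\kappa$, the inaccessibility of $j(\kappa)$ in $V$ lets me thin to $\aleph_2$ of them forming a $\Delta$-system with a common root, and any finitely many such conditions have a common extension because a finite union of Silver conditions sharing a root is again a Silver condition. Translating back, the corresponding $\aleph_2$ positive sets are $<\aleph_0$-linked, which is (1). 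The verification that the Silver uniformity condition is preserved under finite unions is the only point where Silver differs from Levy, and it is routine.

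The heart of the matter is (2), the failure of $(\aleph_2,\aleph_0,\aleph_0)$-saturation, and this is where the substitution of collapses must be handled with care. Here I would reproduce the diagonalising family built in the proof of \ref{maintheorem}(2), whose whole point is to exploit the $\mu$-collapse factor; with $\mu=\aleph_0$ this is the part of $P$ that sends every $\delta<\kappa$ to $\aleph_0$. Using the generic embedding $j^{+}$ one manufactures a sequence $\langle A_\alpha:\alpha<\aleph_2\rangle$ of positive sets such that every countable subfamily already contains an infinite subfamily whose intersection is null, intuitively because a countable everywhere-positive subfamily would capture, inside a single positive set, a generic object that the collapse to $\aleph_0$ provably does not add. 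I expect this to be the main obstacle: one must check that the construction, designed in \ref{maintheorem} for the Levy collapse, still diagonalises correctly against the Silver collapse, and that no special behaviour at $\mu=\aleph_0$ (e.g.\ the appearance of reals in the $\aleph_0$-collapse) disrupts it. Properties (3) and (4) then require no new work: by the first paragraph $j(\kappa)$ is Mahlo in $V$, so \ref{maintheorem}(3) yields layeredness, recovering the theorem of Foreman--Magidor--Shelah~\cite{MR942519}, while \ref{maintheorem}(4) yields non-centeredness, recovering Foreman--Laver~\cite{MR925267}.
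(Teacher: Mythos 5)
The paper itself gives no proof of this theorem beyond the remark that it follows by ``modifying our proof of Theorem~\ref{maintheorem}'', so your overall strategy --- rerun that argument with Kunen's universal collapse in place of $P(\mu,\kappa)$ and the Silver collapse in place of $\dot{\mathrm{Coll}}(\lambda,<j(\kappa))$ --- is exactly the intended one, and your observation that hugeness makes $j(\kappa)$ Mahlo in $V$ (so that (3) comes for free from the layering machinery) is correct. However, your route to (1) contains a genuine error: the quotient $\mathcal{P}(\aleph_1)/I$ is \emph{not} (densely) the Silver collapse $S(\kappa,j(\kappa))$; by the analogue of Claim~\ref{duality} it is $j(P)/(G\ast H)$, where $j(P)$ is $j$ applied to the universal collapse, i.e.\ a tail of a finite-support iteration. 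If the quotient were densely the Silver collapse, its $(j(\kappa),j(\kappa),<\kappa)$-chain condition would make $I$ strongly saturated --- that is precisely Laver's theorem --- flatly contradicting the item (2) you prove next. The correct path to (1) is Lemma~\ref{ccquotient}: one must check that the universal collapse at $j(\kappa)$ has the $(j(\kappa),j(\kappa),<\aleph_0)$-c.c.\ (a $\Delta$-system argument on the finite supports of the iteration) and that Kunen's projection is $<\aleph_0$-continuous; your sunflower argument belongs there, applied to the universal collapse, not to Silver conditions.

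For (2), your heuristic (``a countable positive subfamily would capture a generic object the collapse does not add'') is not how the argument in Claim~\ref{ccconclusion}(ii) goes: it is a pure support-counting argument --- take $\aleph_2$ conditions $r_\alpha$, each nontrivial at a single coordinate above the absorbed part so that $\pi(r_\alpha)=1$, and note that a common lower bound of infinitely many of them would need infinite support. This does transfer to the universal collapse precisely because its conditions have finite support, but that is the point you need to make, and the ``$\aleph_0$-collapse factor of $P$'' you single out is not the relevant part of the poset. The genuinely unaddressed gap is (4): Claim~\ref{centeredconclusion} rests on the product factorization $j(P)\simeq P_*\times P^*$ and on Lemma~\ref{levycentered2} about Levy collapses, neither of which is available when $j(P)$ is an iteration rather than a diagonal product. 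Exhibiting a non-$\aleph_1$-centered poset onto which the quotient projects is exactly the content that Foreman--Laver left unproved, and ``Theorem~\ref{maintheorem}(4) yields non-centeredness'' does not supply it.
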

Foreman--Laver~\cite{MR925267} claimed (4) without proof.

The structure of this paper is as follows. In Section 2, we recall basic facts of forcing projections and some saturation properties. In Section 3, we introduce two properties of continuous and $(\lambda,\lambda,<\mu)$-nice for the projections. We also see these properties work when we study the saturation properties of quotient forcing. In Section 4, we introduce the diagonal product of Levy collapse $P(\mu,\kappa)$ and study the saturation property of $P(\mu,\kappa)$ and Levy collapses. In Section 5, we give the proof of Theorem \ref{maintheorem}.

 \section{Preliminaries}
In this section, we recall basic facts of forcing and saturated ideal. 
We use~\cite{MR1994835} as a reference for set theory in general. For more on the topic of saturated ideal, we refer to~\cite{MR2768692}.

Our notation is standard. We use $\kappa,\lambda$ to denote a regular cardinal unless otherwise stated. We also use $\mu$ to denote a cardinal, possibly finite, unless otherwise stated. For $\kappa < \lambda$, $E^{\lambda}_\kappa$, $E^{\lambda}_{>\kappa}$ and $E^{\lambda}_{\leq\kappa}$ denote the set of all ordinals below $\lambda$ of cofinality $\kappa$, $>\kappa$ and $\leq\kappa$, respectively. We also write $[\kappa,\lambda) = \{\xi \mid \kappa \leq \xi < \lambda\}$. By $\mathrm{Reg}$, we mean the class of regular cardinals.

Throughout this paper, we identify a poset $P$ with its separative quotient. Thus, $p \leq q\leftrightarrow \forall r \leq p(r {\parallel} q) \leftrightarrow p \force q \in \dot{G}$, where $\dot{G}$ is the canonical name of $(V,P)$-generic filter. A projection $\pi:Q \to P$ is an order-preserving mapping with the property that $q \leq_P \pi(p)$ implies $\exists r \leq_Q p(\pi(r) \leq_P q)$ and $\pi(1_P) = 1_{Q}$. We say that $P$ is a complete suborder of $Q$, denoted by $P \lessdot Q$ if the identity mapping $i:P \to Q$ is a complete embedding. 
Whenever a projection $\pi:Q \to P$ is given, for every dense $D$ in $P$, $\pi^{-1}D$ is also dense in $Q$. It follows that $Q \force \pi``\dot{H}$ generates a $(V,P)$-generic filter, where $\dot{H}$ is the canonical name of $(V,Q)$-generic filter. The quotient forcing is defined by $P \force Q / \dot{G} = \{q \in Q\mid \pi(q) \in \dot{G}\}$, ordered by $\leq_P$. In addition, we can define a dense embedding $\tau: Q \to P \ast Q / \dot{G}$ by $\tau(q) = \langle{\pi(q),\hat{q}}\rangle$ where $\hat{q}$ is a $P$-name with $P \force \pi(q) \in \dot{G} \to \hat{q} = q$ and $\pi(q) \not \in \dot{G} \to \hat{q} = 1$. Thus, $Q \simeq P \ast Q/\dot{G}$. 
The completion of $P$ is a complete Boolean algebra $B(P)$ such that $P \lessdot B(P)$ is a dense subset. $B(P)$ is unique up to isomorphism. 

Whenever a projection $\pi:Q \to P$ is given, there is a complete embedding $e:P \to \mathcal{B}(Q)$ which is defined by $e(p) = \sum\{q\mid \pi(q) \leq p\}$ . It is easy to see that $\pi(e(p)) = p$ and $e(\pi(q)) \geq q$. First, we check about basic properties of $e$ and $\pi$ for Section 3.
 
\begin{lem}
 If $e:P \to Q$ is a complete embedding between complete Boolean algebras, then the following holds:
\begin{enumerate}
 \item For every $A \subseteq P$, $e(\prod A) = \prod e``A$.
 \item If $e$ is defined by a projection $\pi:Q \to P$, $\pi(e(p) \cdot q) = p\cdot \pi(q)$.
\end{enumerate}
\end{lem}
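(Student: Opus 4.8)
The plan is to handle the two parts separately, exploiting throughout that a complete embedding between complete Boolean algebras is an injective complete homomorphism, together with the reduction property of the projection $\pi$ recast as an adjunction.

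For (1), I would first record that $e$ preserves arbitrary joins and, since $e(p)\cdot e(-p)=0$ while $e(p)+e(-p)=e(1)=1$, also complements; meets are then preserved by De Morgan's law. Explicitly, writing $\prod A=-\sum\{-a\mid a\in A\}$ in $P$ and pushing $e$ inside yields $e(\prod A)=-\sum\{-e(a)\mid a\in A\}=\prod e``A$. If one prefers to avoid complements, the inequality $e(\prod A)\leq\prod e``A$ is immediate from monotonicity, and the reverse inequality can be extracted from the reduction property of the associated projection exactly as in the second part below.

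For (2), the crucial preliminary is the adjunction $\pi(q')\leq p\iff q'\leq e(p)$, valid for all $p\in P$ and $q'\in Q$: if $\pi(q')\leq p$ then $q'$ is one of the joinands of $e(p)=\sum\{r\mid\pi(r)\leq p\}$, and conversely $q'\leq e(p)$ gives $\pi(q')\leq\pi(e(p))=p$ by monotonicity. In particular $e(0)=0$, so $\pi$ is faithful in the sense that $\pi(q')=0$ forces $q'=0$. The easy inequality $\pi(e(p)\cdot q)\leq p\cdot\pi(q)$ then follows by applying the monotone $\pi$ to $e(p)\cdot q\leq e(p)$ and to $e(p)\cdot q\leq q$, using $\pi(e(p))=p$. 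For the reverse inequality, write $s=\pi(e(p)\cdot q)$ and suppose toward a contradiction that $t:=(p\cdot\pi(q))\cdot(-s)\neq 0$. Since $t\leq\pi(q)$, the reduction property furnishes a nonzero $r\leq q$ with $\pi(r)\leq t$; as $t\leq p$, the adjunction gives $r\leq e(p)$, hence $r\leq e(p)\cdot q$ and $\pi(r)\leq s$. But $\pi(r)\leq t\leq -s$, so $\pi(r)=0$ and therefore $r=0$ by faithfulness, a contradiction. Thus $p\cdot\pi(q)\leq s$, which completes (2).

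The main obstacle is the reverse inequality in (2): it is the only place where the reduction property of $\pi$, rather than mere monotonicity and the adjunction, is genuinely used, and one must take care to produce a \emph{nonzero} reduction $r$ below $q$ and to invoke faithfulness of $\pi$ correctly. Everything else is formal manipulation of joins, meets, and complements in the two complete Boolean algebras.
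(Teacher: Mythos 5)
Your proposal is correct. Part (2) is essentially the paper's own argument: the paper verifies that every $r\leq p\cdot\pi(q)$ is compatible with $\pi(e(p)\cdot q)$ and concludes by separativity, using the projection property to find $s\leq q$ with $\pi(s)\leq r$ and the identity $q\cdot e(p)=\sum\{q\cdot x\mid \pi(x)\leq p\}$ to see $s\leq q\cdot e(p)$; your version phrases the same compatibility argument with complements ($t=(p\cdot\pi(q))\cdot(-s)$) and replaces the distributivity computation by the adjunction $\pi(r)\leq p\iff r\leq e(p)$, which is a clean and equivalent packaging. Part (1) is where you genuinely diverge: the paper argues directly that any nonzero $b\leq\prod e``A$ meets $e(\prod A)$ by taking a reduct $c$ of $b$ and showing $c\leq\prod A$, whereas you deduce meet-preservation from join-preservation and complement-preservation via De Morgan. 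That route is valid, but be aware of where the work lives: with the paper's (Kunen-style, order-theoretic) definition of complete embedding, the fact that $e$ preserves \emph{arbitrary} joins is itself proved by exactly the same reduct-and-compatibility argument the paper uses for meets, so you have relocated rather than eliminated the core step; if instead you take "complete embedding of complete Boolean algebras" to mean "injective complete homomorphism" the statement of (1) becomes nearly a tautology, which is presumably why the paper spells out the order-theoretic proof. One small caution: your derivation of faithfulness of $\pi$ from $e(0)=0$ is circular as written ($e(0)=\sum\{q\mid\pi(q)=0\}$, so $e(0)=0$ \emph{is} the assertion that $\pi$ kills only $0$); faithfulness should instead be read off from the fact that $\pi$ is a map of posets of nonzero conditions, which is part of the setup and is all your contradiction in (2) needs.
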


\begin{proof}
 (1) It is easy to see $e(\prod A) \leq \prod e``A$. Let us see $\prod e``A \leq e(\prod A)$. By separativity, it suffices to show that $\forall b \leq \prod e``A(b \cdot e(\prod A) \not= 0)$. Let $c \in P$ be a reduct of $b$, that is $\forall d \leq c(e(d)\cdot b \not= 0)$. For every $d \leq c$ and $a \in A$, $e(d) \cdot e(a)\geq e(d) \cdot b \not = 0$. Thus, $d \cdot a \not= 0$ for all $d \leq c$ in $P$, especially $c \leq a$. Therefore, $c \leq \prod A$ and $b \cdot e(\prod A) \geq b \cdot e(c) \not= 0$ 

 (2) Observe that $\pi(e(p) \cdot q) \leq \pi(e(p)) \cdot \pi(q) = p \cdot \pi(q)$. To show $p \cdot \pi(q) \leq \pi(e(p) \cdot q)$, we check $\forall r \leq \pi(q) \cdot p(r \cdot (p \cdot \pi(q)) \not= 0)$. For any $r \leq \pi(q) \cdot p$, there is an $s \leq q$ with $\pi(s) \leq r$. By $q \cdot e(p) = q \cdot \sum\{x \mid \pi(x) \leq p\} = \sum\{q \cdot x \mid \pi(x) \leq p\}$, $s = q \cdot s \leq q \cdot e(p)$. Therefore $\pi(s) \leq \pi(q \cdot e(p)) \cdot r$. 
\end{proof}

Next, we define saturation properties that we will deal with in this paper. For cardinals $\mu\leq \kappa \leq \lambda$, we say that $P$ has the $(\lambda,\kappa,<\mu)$-c.c. if, for every $X \in [P]^{\lambda}$, there is a $Y \in [X]^{\kappa}$ such that $Z$ has a lower bound for all $Z \in [Y]^{<\mu}$. By $(\lambda,\kappa,\mu)$-c.c., we mean $(\lambda,\kappa,<\mu^{+})$-c.c. Of course, $\lambda$-c.c. and $\lambda$-Knaster are the same as $(\lambda,2,2)$-c.c. and $(\lambda,\lambda,2)$-c.c., respectively. 

For a stationary subset $S \subseteq \lambda$, we say that $P$ is $S$-layered if there is an $\subseteq$-increasing sequence $\langle P_\delta \mid \delta < \lambda \rangle$ with the following properties:
	\begin{enumerate}
	 \item $P = \bigcup_{\delta < \lambda}P_\delta$.
	 \item $P_{\delta} \lessdot P$ and $|P_\delta| < \lambda$ for all $\delta < \lambda$.
	 \item There is a club $C \subseteq \lambda$ such that $\forall \delta \in S \cap C(P_\delta = \bigcup_{\zeta < \delta}P_{\zeta})$. 
	\end{enumerate}
We say such sequence a $S$-layering of $P$.
For a later purpose, we introduce the notion of filtration. We say that $\langle P_{\delta} \mid \delta < \lambda \rangle$ is a filtration of $P$ if it is an $\subseteq$-increasing continuous sequence with $P = \bigcup_{\delta < \lambda}P_\delta$ and $|P_{\delta}| < \lambda$ for all $\delta < \lambda$. 
\begin{lem}\label{layeredchar}
 The following are equivalent.
 \begin{enumerate}
  \item $P$ is $S$-layered.
  \item There are a filtration $\langle P_{\delta} \mid \delta < \lambda \rangle$ and a club $C \subseteq \lambda$ such that $P_\delta \lessdot P$ for all $\delta \in S \cap C$. 
  \item For any filtration $\langle P_{\delta} \mid \delta < \lambda \rangle$, there is a club $C \subseteq \lambda$ such that $P_\delta \lessdot P$ for all $\delta \in S \cap C$. 
 \end{enumerate}
\end{lem}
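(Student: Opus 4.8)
The plan is to prove the two cycles $(1)\Rightarrow(2)\Rightarrow(1)$ and $(2)\Rightarrow(3)\Rightarrow(2)$, which together yield all three equivalences. Throughout I would assume $|P|\le\lambda$, the setting in which these notions are used: otherwise no filtration exists, so $(1)$ and $(2)$ fail while $(3)$ holds vacuously, and the equivalence breaks. Under $|P|\le\lambda$ with $\lambda$ regular a filtration always exists (enumerate $P$ in order type $\le\lambda$ and take closures of initial segments). I would first set up two elementary tools. Tool one: from any $\subseteq$-increasing sequence $\langle P_\delta\rangle$ with union $P$ and $|P_\delta|<\lambda$ one manufactures a filtration by $P'_\delta=\bigcup_{\zeta<\delta}P_\zeta$; this is continuous and satisfies $P'_\delta=P_\delta$ at every limit $\delta$ where $\langle P_\delta\rangle$ happens to be continuous. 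Tool two: any two filtrations $\langle P_\delta\rangle,\langle Q_\delta\rangle$ of $P$ agree on a club, since if $f(\delta)$ is least with $P_\delta\subseteq Q_{f(\delta)}$ and $g(\delta)$ least with $Q_\delta\subseteq P_{g(\delta)}$, then every limit point closed under $f$ and $g$ gives $P_\delta=Q_\delta$.

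The easy directions I would dispatch with these tools. For $(1)\Rightarrow(2)$, given an $S$-layering $\langle P_\delta\rangle$ with continuity club $C$, the associated filtration $P'_\delta=\bigcup_{\zeta<\delta}P_\zeta$ satisfies $P'_\delta=P_\delta\lessdot P$ for all $\delta\in S\cap C$, which is exactly $(2)$. For $(3)\Rightarrow(2)$ I would apply $(3)$ to any fixed filtration. For $(2)\Rightarrow(3)$, if a filtration $\langle P_\delta\rangle$ and club $C$ witness $(2)$ and $\langle Q_\delta\rangle$ is arbitrary, take a club $C_0$ on which the two filtrations agree; then $Q_\delta=P_\delta\lessdot P$ for every $\delta\in S\cap C\cap C_0$.

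The substantive direction is $(2)\Rightarrow(1)$, where I must build one sequence that is complete at \emph{every} index and continuous on $S$. First I would record the richness property $(\ast)$ implied by $(2)$: every $A\in[P]^{<\lambda}$ lies in some complete suborder of size $<\lambda$. Indeed $A\subseteq P_{\gamma_0}$ for some $\gamma_0$ by regularity, and any $\delta\in S\cap C$ with $\delta\ge\gamma_0$ gives $A\subseteq P_\delta\lessdot P$. Fix a map $c$ on $[P]^{<\lambda}$ with $A\subseteq c(A)\lessdot P$, $|c(A)|<\lambda$, and crucially $c(A)=A$ whenever $A\lessdot P$. Then I would define $\langle R_\alpha\rangle$ recursively by $R_\alpha=c\bigl(P_\alpha\cup\bigcup_{\beta<\alpha}R_\beta\bigr)$ (the argument has size $<\lambda$ by regularity). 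This sequence is $\subseteq$-increasing, each $R_\alpha\lessdot P$ has size $<\lambda$, and $\bigcup_\alpha R_\alpha\supseteq\bigcup_\alpha P_\alpha=P$, so clauses $(1)$ and $(2)$ of the definition hold. For continuity, choose $f$ with $R_\beta\subseteq P_{f(\beta)}$ and let $C_1$ be the club of limit ordinals closed under $f$; since $P_\beta\subseteq R_\beta$, for $\alpha\in C_1$ one gets $\bigcup_{\beta<\alpha}R_\beta\subseteq\bigcup_{\beta<\alpha}P_{f(\beta)}\subseteq P_\alpha\subseteq\bigcup_{\beta<\alpha}R_\beta$, hence $\bigcup_{\beta<\alpha}R_\beta=P_\alpha$. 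Thus for $\alpha\in S\cap C\cap C_1$ the running union equals $P_\alpha$, which is a complete suborder, so $c$ fixes it and $R_\alpha=P_\alpha=\bigcup_{\beta<\alpha}R_\beta$, giving the desired continuity on $S\cap(C\cap C_1)$.

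The step I expect to be the real obstacle is precisely the completeness of the members at limit indices, and the reason I interleave $\langle R_\alpha\rangle$ with the good points of the filtration rather than taking a more naive route. One is tempted to argue that $\{\delta:P_\delta\lessdot P\}$ is closed and simply reindex it, but this fails: an $\subseteq$-increasing union of suborders each complete in $P$ need not be complete in $P$ (the familiar failure of direct limits, already visible for a length-$\omega$ tower of Cohen algebras inside a full-support product). So completeness of $\bigcup_{\beta<\alpha}R_\beta$ at a limit $\alpha$ is not automatic, and continuity cannot be arranged for free. What rescues the construction is that on the club $C_1$ the running union is \emph{forced} to coincide with the honestly complete suborder $P_\alpha$, so at the points of $S$ we never rely on a tower union being complete of its own accord.
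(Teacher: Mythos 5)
Your proof is correct, but it takes a genuinely different route from the paper's in the one substantive direction. For $(2)\Leftrightarrow(3)$ the paper simply cites Cox~\cite{MR3911105}; your ``two filtrations agree on a club'' lemma is the standard proof of that fact, so this is a reasonable self-contained substitute. For $(2)\Rightarrow(1)$ the paper does something much lighter than your closure-operator construction: from the filtration $\langle Q_\alpha \mid \alpha<\lambda\rangle$ and the club $C$ it defines $P_\alpha = Q_{\min(S\cap C\setminus\alpha)}$. Every member of this sequence is literally one of the $Q_\eta$ with $\eta\in S\cap C$, hence automatically a complete suborder of size $<\lambda$; and for $\delta\in S\cap C$ above $\min(S\cap C)$ the sequence is constant with value $Q_\delta$ on a final segment of $\delta$, so $\bigcup_{\zeta<\delta}P_\zeta=Q_\delta=P_\delta$. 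This sidesteps the direct-limit obstruction you correctly identify: at a limit not in $S\cap C$ the ``union'' degenerates to a single $Q_\eta$ (for $\eta$ the next element of $S\cap C$) rather than a genuinely increasing tower, so no closure map $c$ and no auxiliary sequence $\langle R_\alpha\rangle$ are needed. Your construction is nevertheless sound --- interleaving $P_\alpha$ into the recursion and passing to the club $C_1$ closed under $f$ does force $\bigcup_{\beta<\alpha}R_\beta=P_\alpha$ at the relevant points, and the fixed-point requirement $c(A)=A$ for $A\lessdot P$ then yields continuity on $S\cap C\cap C_1$ --- it is just more machinery than the statement requires. Your observation that the equivalence presupposes $|P|\le\lambda$ (so that filtrations exist at all, and so that $(3)$ is not vacuous) is a fair point that the paper leaves implicit.
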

\begin{proof}
 In \cite{MR3911105}, it is shown that (2) and (3) are equivalent. We check that (1) and (2) are equivalent. First, we assume (1). Let $\langle P_\alpha \mid \alpha < \lambda \rangle$ be a $S$-layering sequence of $P$. It is easy to see that $\langle \bigcup_{\beta < \alpha}P_{\beta} \mid \alpha < \lambda \rangle$ is a filtration which witnesses to (2). 

 Let us see the inverse direction. Let $\langle Q_{\alpha} \mid \alpha < \lambda \rangle$ be a filtration of $P$ and $C\subseteq \lambda$ be a club such that $Q_{\alpha} \lessdot P$ for all $\alpha \in S \cap C$. Define $P_{\alpha} = Q_{\min (S\cap C \setminus \alpha)}$ for all $\alpha < \lambda$. It is easy to see that $\langle P_{\alpha} \mid \alpha < \lambda \rangle$ is a $S$-layering sequence of $P$. 
\end{proof}

Shelah showed that $S$-layered implies the $\lambda$-c.c. Moreover, Cox~\cite{MR3911105} showed that $S$-layered implies $\lambda$-Knaster. 

 We say that $P$ is $\lambda$-centered if $P$ is a union of $\lambda$-many centered subset of $P$. A centered subset is $X \subseteq P$ such that $\forall Z \in [X]^{<\omega}(X$ has a lower bound$)$. We call such centered sets a centering of $P$. It is easy to see that every $\lambda$-centered poset has the $(\lambda^{+},\lambda^{+},<\omega)$-c.c., which in turn implies the $\lambda^{+}$-c.c. These properties are preserved by projection. Indeed,
\begin{lem}
 If $\pi:Q \to P$ is a projection. Then the following holds.
\begin{enumerate}
 \item If $Q$ has the $(\lambda,\lambda,<\mu)$-c.c., then so does $P$.
 \item If $Q$ is $S$-layered for some stationary $S \subseteq \lambda$, then so is $P$.
 \item If $Q$ is $\lambda$-centered, then so is $P$.
\end{enumerate} 
\end{lem}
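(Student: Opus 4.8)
The plan is to lean on the two features of a projection that survive into $P$: the image $\pi``Q$ is dense in $P$, and $\pi$ preserves compatibility downward, i.e.\ $t \le_Q q_1,\dots,q_n$ forces $\pi(t) \le_P \pi(q_i)$ for every $i$. For (1) and (3) this is essentially all that is required, while (2) needs the genuinely new ingredient of transporting complete suborders across $\pi$.

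For (1), given $X \in [P]^{\lambda}$ I would choose for each $p \in X$ some $q_p \in Q$ with $\pi(q_p) \le_P p$, using density of $\pi``Q$. If one $q$ serves as $q_p$ for $\lambda$-many $p$, then $\pi(q)$ is already a common lower bound of those $p$ and any $Y$ among them works trivially; otherwise the fibers are small, so $\{q_p \mid p \in X\}$ contains a set $X' \in [Q]^{\lambda}$ on which $p \mapsto q_p$ is injective. Applying the $(\lambda,\lambda,<\mu)$-c.c.\ of $Q$ to $X'$ produces $Y' \in [X']^{\lambda}$ every $<\mu$-subset of which has a lower bound; pulling back to $Y = \{p \mid q_p \in Y'\}$ and sending a lower bound $t$ of $\{q_p \mid p \in Z\}$ to $\pi(t)$ gives a lower bound of any $Z \in [Y]^{<\mu}$. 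Part (3) is the same idea: writing $Q = \bigcup_{\alpha<\lambda} C_{\alpha}$ with each $C_{\alpha}$ centered, I would set $D_{\alpha} = \{p \in P \mid \exists q \in C_{\alpha}\ (\pi(q) \le_P p)\}$. Downward preservation of compatibility makes each $D_{\alpha}$ centered, and density of $\pi``Q$ gives $\bigcup_{\alpha} D_{\alpha} = P$.

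The substance is in (2), and I expect the main obstacle to be that $\pi$ does \emph{not} preserve incompatibility, so the direct image of a complete suborder of $Q$ need not be a complete suborder of $P$. I would route around this with elementary submodels, isolating the following key lemma: if $M \prec H_{\theta}$ has $\pi, P, Q \in M$ and $Q \cap M \lessdot Q$, then $P \cap M \lessdot P$. To verify it, note first that for $p_1,p_2 \in P \cap M$ compatibility in $P$ reflects into $P \cap M$ by elementarity, so $P \cap M$ is a suborder with the correct incompatibility relation; it then suffices to produce a reduct of every $p \in P$ in $P \cap M$ (a reduct $\hat p$ being such that every $P\cap M$-extension of $\hat p$ is $P$-compatible with $p$). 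Given $p$, pick $q \in Q$ with $\pi(q) \le_P p$ and let $\hat q \in Q \cap M$ be a reduct of $q$; I claim $\pi(\hat q) \in P \cap M$ reduces $p$. For $r \in P \cap M$ with $r \le_P \pi(\hat q)$, the projection property gives $s \le_Q \hat q$ with $\pi(s) \le_P r$, and by elementarity such $s$ can be taken in $Q \cap M$; since $\hat q$ is a reduct of $q$, $s$ and $q$ have a common lower bound $t$, whence $\pi(t) \le_P r$ and $\pi(t) \le_P p$, witnessing $r \parallel_P p$.

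With the key lemma available I would finish using Lemma~\ref{layeredchar}. Building a continuous $\subseteq$-increasing chain $\langle M_{\delta} \mid \delta < \lambda\rangle$ of elementary submodels of $H_{\theta}$ with $\pi, P, Q \in M_0$, $|M_{\delta}| < \lambda$, $M_{\delta} \cap \lambda \in \lambda$, and $\bigcup_{\delta} M_{\delta} \supseteq P \cup Q$, the sequences $\langle Q \cap M_{\delta}\rangle$ and $\langle P \cap M_{\delta}\rangle$ are filtrations of $Q$ and $P$ respectively. Since $Q$ is $S$-layered, Lemma~\ref{layeredchar}(3) applied to $\langle Q \cap M_{\delta}\rangle$ yields a club $C$ with $Q \cap M_{\delta} \lessdot Q$ for all $\delta \in S \cap C$; the key lemma then gives $P \cap M_{\delta} \lessdot P$ for those same $\delta$, and Lemma~\ref{layeredchar}(2) concludes that $P$ is $S$-layered. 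The only point demanding care is the bookkeeping that makes $\langle Q \cap M_{\delta}\rangle$ and $\langle P \cap M_{\delta}\rangle$ genuine filtrations (continuity at limits and exhaustion of $P$ and $Q$), which is the routine closing-off argument.
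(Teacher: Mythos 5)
Your proof is correct. Note that the paper states this lemma without proof, so there is no argument of the author's to compare against: it is presented as a standard fact, with (1) and (3) being the routine transfer along a projection (density of $\pi``Q$ in $P$ plus order-preservation, exactly as you do it) and (2) being essentially the preservation result from Cox~\cite{MR3911105}, whose proof is the elementary-submodel argument you reconstruct. Your key lemma ($Q\cap M\lessdot Q$ implies $P\cap M\lessdot P$ when $\pi,P,Q\in M$) is the right isolation of the nontrivial content, and the reduct computation --- passing from a reduct $\hat q$ of $q$ in $Q\cap M$ to the reduct $\pi(\hat q)$ of $p$ in $P\cap M$, using elementarity to find the witness $s\le\hat q$ inside $M$ --- is sound; this is precisely where the failure of $\pi$ to preserve incompatibility is circumvented. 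Two small points of hygiene: in (1) the splitting into the ``constant fiber'' case versus the ``injective'' case needs the standing assumption that $\lambda$ is regular (which the paper makes globally), and in (2) the exhaustion of $P$ by the chain $\langle M_\delta\mid\delta<\lambda\rangle$ requires $|P|\le\lambda$; the latter is not literally implied by $|Q|\le\lambda$ for an abstract separative $P$, but it is forced by the conclusion (an $S$-layered poset has size at most $\lambda$) and holds in every application in the paper, so it is an implicit hypothesis rather than a gap in your argument.
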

It is easy to see that $(\lambda,\lambda,<\mu)$-c.c. and $\lambda$-centeredness are preserved under taking the completion. We need to be careful in the case of layeredness, but there is no harm in the current definition because the cardinalities remain the same after taking the completions in this paper.

In this paper, by ideal, we means a fine and normal ideal. 
For an ideal $I$ over $\mathcal{P}_{\kappa}\lambda$, $\mathcal{P}(\mathcal{P}_\kappa(\lambda)) / I$ denotes the poset $\mathcal{P}(\mathcal{P}_{\kappa}\lambda) \setminus I$ ordered by $A \leq B\leftrightarrow A \setminus B \in I$. We say that $I$ is $(\lambda',\kappa',<\mu')$-saturated if $\mathcal{P}(\mathcal{P}_{\kappa}\lambda) / I$ has the $(\lambda',\kappa',<\mu')$-c.c. Simply, we say $I$ is saturated if $I$ is $(\lambda^{+},2,2)$-saturated. 

Similarly, we say that $I$ is strongly saturated, layered and centered if $\mathcal{P}(\mathcal{P}_{\kappa}\lambda) / I$ is $(\lambda^{+},\lambda^{+},<\lambda)$-saturated, $S$-layered for some stationary subset $S \subseteq E^{\lambda^{+}}_{\lambda}$ and $\lambda$-centered, respectively.  The implications between these properties are as follows:

 \begin{center}
 \begin{tikzpicture}
  \node at (0,0) {{Dense}};
  \draw[->] (0,-0.3) to (-2,-0.5);
  \draw[->] (0,-0.3) to (0,-0.8);
  \draw[->] (0,-0.3) to (2,-0.8);
  \node[anchor = east] at (-2.2,-0.5) {Strongly layered};
  \node[anchor = east] at (-2.2,-2) {Layered};
  \node at (0,-1) {Centered};
  \node[anchor = west] at (2.2,-1) {Strongly saturated};
  \draw[->] (0,-1.2) to (0,-1.8); 
  \draw[->] (2,-1.2) to (1,-1.8);
  \node at (0,-2) {$(\lambda^{+},\lambda^{+},2)$-saturated};
  \draw[->] (0,-2.2) to (0,-2.8);
  \node at (0,-3) {Saturated};

  \draw[->] (-2.8,-0.8) to (-2.8,-1.7);
  \draw[->] (-2,-2) -- node[auto = right]{\tiny{Cox~\cite{MR3911105}}} (-1.7,-2);
  \draw[->] (-2.8,-0.8) --node[pos = 0.9, auto = right]{\tiny{Shelah~\cite{MR850051}}} (-1,-1);
 \end{tikzpicture}
\end{center}
Here, $I$ is dense and strongly layered if $\mathcal{P}(\mathcal{P}_{\kappa}\lambda)/I$ has a dense subset of size $\lambda$ and $\mathcal{P}(\mathcal{P}_{\kappa}\lambda)/I$ is $E^{\lambda^{+}}_{\lambda}$-layered, respectively.

\section{Continuity of projections}
Our study of ideals will be reduced to that of the quotient forcing induced by projections. In the first half of this section, we give sufficient conditions for the quotient to have the desired properties.
\begin{defi}
 Suppose $\pi:Q \to P$ is a projection between complete Boolean algebras. We say that $\pi$ is $<\mu$-continuous if $\pi(\prod Z) = \prod \pi``Z$ for all $Z \in [Q]^{<\mu}$ with $\prod^{Q}Z\not= 0$.

We also say that $\pi$ is continuous if $\pi$ is $<\mu$-continuous for all $\mu$. 
\end{defi}
For a projection $\pi:Q \to P$ between posets, we say that $P$ is $<\mu$-continuous if the lifting $\pi:\mathcal{B}(Q) \to \mathcal{B}(P)$ is $<\mu$-continuous. For the following lemma, recall that a Boolean algebra $P$ is $(<\mu,\lambda)$-distributive if $P$ adds no new sequences to $\lambda$ of length $<\mu$. 
\begin{lem}\label{ccquotient}
  Suppose that $P$ is $(<\mu,\lambda)$-distributive, $\pi:Q \to P$ is $<\mu$-continuous projection between complete Boolean algebras, and $Q$ has the $(\lambda,\lambda,<\mu)$-c.c. then $P \force Q/ \dot{G}$ has the $(\lambda,\lambda,<\mu)$-c.c.
\end{lem}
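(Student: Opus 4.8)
\emph{Proof proposal.} The plan is to take a name for a $\lambda$-sized subfamily of the quotient, convert it into a genuine subfamily of $Q$ through the isomorphism $Q \simeq P \ast Q/\dot{G}$, apply the $(\lambda,\lambda,<\mu)$-c.c.\ of $Q$ in $V$ to thin it out, read off lower bounds in the quotient using $<\mu$-continuity, and finally use $(<\mu,\lambda)$-distributivity to guarantee that no troublesome new small subfamilies appear in the extension.

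First I would work in $V[G]$ for $G$ a $(V,P)$-generic filter and fix a subset $X$ of $Q/\dot{G}$ of size $\lambda$. Back in $V$, choose a name $\langle \dot{q}_{\alpha} \mid \alpha < \lambda\rangle$ and a condition $p \in G$ forcing that $\alpha \mapsto \dot{q}_\alpha$ injectively enumerates $X$ with each $\dot{q}_\alpha \in Q/\dot{G}$. Using $Q \simeq P \ast Q/\dot{G}$, let $s_\alpha \in Q$ be the element corresponding to $\langle p, \dot{q}_\alpha\rangle$. Since $p$ forces each $\dot{q}_\alpha$ to be a nonzero condition of the quotient, I expect $\pi(s_\alpha) = p$, and since $p$ forces the enumeration to be injective the $s_\alpha$ are pairwise distinct; hence $\{s_\alpha \mid \alpha < \lambda\} \in [Q]^{\lambda}$ lies in $V$.

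Next I would apply the $(\lambda,\lambda,<\mu)$-c.c.\ of $Q$ in $V$ to obtain $Y \in [\lambda]^{\lambda} \cap V$ such that $\prod_{\alpha \in Z} s_\alpha \neq 0$ for every $Z \in [Y]^{<\mu} \cap V$. For such a $Z$, $<\mu$-continuity of $\pi$ yields $\pi(\prod_{\alpha \in Z} s_\alpha) = \prod_{\alpha \in Z}\pi(s_\alpha) = p$. Translating the nonzero element $\prod_{\alpha \in Z}s_\alpha$ back through $Q \simeq P \ast Q/\dot{G}$, it corresponds to a pair $\langle p, \dot{t}_Z\rangle$, so $p$ forces that $\dot{t}_Z$ is a nonzero lower bound of $\{\dot{q}_\alpha \mid \alpha \in Z\}$ in $Q/\dot{G}$. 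The point of computing the projection via continuity is exactly that it equals $p$, so that $p$ itself (not merely some further extension) forces a genuine lower bound to exist.

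The final and, I expect, crucial step is to pass from $V$-subsets $Z$ to arbitrary $<\mu$-subsets existing in $V[G]$. Here I would invoke distributivity: since $P$ is $(<\mu,\lambda)$-distributive it adds no new $<\mu$-sequences of ordinals below $\lambda$, hence no new $<\mu$-subsets of $Y$, so $[Y]^{<\mu}$ computed in $V[G]$ equals $[Y]^{<\mu}\cap V$. Consequently, in $V[G]$ every $Z \in [Y]^{<\mu}$ already lies in $V$ and, as $p \in G$, the previous paragraph provides a lower bound for the corresponding subfamily; thus $\{\dot{q}_\alpha^{G} \mid \alpha \in Y\}$ is a subfamily of $X$ of size $\lambda$ all of whose $<\mu$-subfamilies admit lower bounds, which is the $(\lambda,\lambda,<\mu)$-c.c.\ for $Q/\dot{G}$. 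The main obstacle is precisely this interaction of the two hypotheses: the c.c.\ of $Q$ only controls $<\mu$-subfamilies present in the ground model, and without $(<\mu,\lambda)$-distributivity the extension could introduce new $<\mu$-subfamilies of $Y$ for which nothing is guaranteed, so distributivity is exactly what closes this gap.
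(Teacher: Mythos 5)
Your overall strategy is the same as the paper's: pull the $\lambda$-sized family back to conditions of $Q$ in $V$, apply the $(\lambda,\lambda,<\mu)$-c.c.\ of $Q$ there, use $<\mu$-continuity to see that the product of a small subfamily still projects into the generic filter, and use $(<\mu,\lambda)$-distributivity so that every $<\mu$-sized subfamily appearing in $V[G]$ is already one of the ground-model subfamilies you controlled. The last two steps are fine and are exactly what the paper does.

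The genuine gap is the sentence ``let $s_\alpha\in Q$ be the element corresponding to $\langle p,\dot q_\alpha\rangle$ \dots I expect $\pi(s_\alpha)=p$.'' The map $\tau:Q\to P\ast Q/\dot G$, $\tau(q)=\langle\pi(q),\hat q\rangle$, is only a \emph{dense} embedding, so what you get directly from it is, for each $\alpha$, some $r_\alpha\in Q$ with $\pi(r_\alpha)\leq p$ and $\pi(r_\alpha)\Vdash r_\alpha\leq\dot q_\alpha$; there is no canonical preimage of $\langle p,\dot q_\alpha\rangle$ in $Q$ itself with projection exactly $p$. You can repair this by setting $s_\alpha=\sum\{q:\pi(q)\leq p\ \wedge\ \pi(q)\Vdash q\leq\dot q_\alpha\}$ in the complete Boolean algebra $Q$; then one must check that $\pi$ preserves suprema (so that $\pi(s_\alpha)$ is the sup of a set of conditions predense below $p$, hence equals $p$) and that $p$ still forces $s_\alpha\leq\dot q_\alpha$ in the separative quotient of $Q/\dot G$ -- neither is immediate, and both are absent from your write-up, yet they carry the whole argument: without $\pi(s_\alpha)=p$ the identity $\pi(\prod_{\alpha\in Z}s_\alpha)=\prod_{\alpha\in Z}\pi(s_\alpha)=p$ collapses. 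The paper avoids this normalization entirely: it keeps the conditions $r_\alpha$ with $\pi(r_\alpha)\leq p$ possibly varying, notes that $P$ inherits the $\lambda$-c.c.\ from $Q$ via the projection, and uses the Boolean value $b=\|\,|\{\alpha\in K:\pi(r_\alpha)\in\dot G\}|=\lambda\,\|\neq 0$ together with a name $\dot K$ for the surviving index set; the witness to the chain condition in the extension is then $\dot K^G$ rather than a ground-model set $Y$. Either route works, but as written your version asserts rather than proves its key step.
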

\begin{proof}
 Let $p$ and $\{\dot{q}_{\alpha} \mid \alpha < \lambda \}$ be arbitrary with $p \force \dot{q}_{\alpha} \in Q / \dot{G}$. For each $\alpha$, we can take $r_{\alpha}$ such that $\pi(r_\alpha) \leq p$ and $\pi(r_\alpha) \force r_{\alpha} \leq \dot{q}_{\alpha}$ in $Q / \dot{G}$. Since $Q$ has the $(\lambda,\lambda,<\mu)$-c.c., there is a $K \in [\lambda]^{\lambda}$ such that $\forall Z \in [K]^{<\mu}(\prod_{\alpha \in Z} q_{\alpha} \not= 0)$. Let $b = ||\{\alpha \in K \mid \pi(q_{\alpha}) \in \dot{G}\}| = \lambda||$. Since $P$ has the $\lambda$-c.c., $b \not=0$. 
Let $\dot{K}$ be a $P$-name for $\{\alpha \in K \mid \pi(q_{\alpha}) \in \dot{G}\}$. 

We claim that $b \leq p$ forces $\forall Z \in [\dot{K}]^{<\lambda}(\prod_{\alpha \in Z} q_{\alpha} \in Q / \dot{G})$. By $(<\mu,\lambda)$-distributivity, if $q \leq b$ forces  $\dot{Z} \in [\dot{K}]^{<\mu}$ for some $\dot{Z}$ then we may assume that $q \force \dot{Z} = Z$ for some $Z \in [K]^{<\mu}$. For each $\alpha \in Z$, we have $q \force \pi(r_{\alpha}) \in \dot{G}$ and thus $q \leq \pi(r_{\alpha})$. Because of $q \leq \prod_{\alpha \in Z} \pi(r_{\alpha}) = \pi(\prod_{\alpha \in Z}r_{\alpha})$, $q$ forces $\prod_{\alpha \in Z}r_{\alpha} \in Q / \dot{G}$. $q$ also forces $\prod_{\alpha \in Z}r_{\alpha} \leq r_{\alpha} \leq \dot{q}_{\alpha}$ for each $\alpha \in Z$. 
\end{proof}

Next, we consider the case of layeredness.
\begin{lem}\label{layeredquotient}
 Suppose that $Q$ is $S$-layered for some stationary subset $S \subseteq \lambda$ and $\pi:Q \to P$ is a $2$-continuous projection. Then $P \force Q / \dot{G}$ is $S$-layered.
\end{lem}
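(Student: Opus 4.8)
The plan is to apply the characterization in Lemma~\ref{layeredchar}(2): working in a generic extension $V[G]$ by $P$, I will produce a filtration of $Q/G$ together with a club on whose intersection with $S$ the initial segments are complete suborders. Since $Q$ is $S$-layered, Lemma~\ref{layeredchar} supplies a filtration $\langle Q_\delta \mid \delta < \lambda\rangle$ of $Q$ and a club $C \subseteq \lambda$ with $Q_\delta \lessdot Q$ for every $\delta \in S \cap C$. Replacing $Q$ and $P$ by their completions---harmless here, since the relevant cardinalities and the layering structure are preserved as noted in Section~2---I may assume that $Q$ and $P$ are complete Boolean algebras, that each $Q_\delta$ with $\delta \in S \cap C$ is a complete subalgebra of $Q$, and that $\pi$ is the lifted, $2$-continuous projection. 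For a $(V,P)$-generic $G$ I set $R_\delta = \{q \in Q_\delta \mid \pi(q) \in G\}$.

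First I would check that $\langle R_\delta \mid \delta < \lambda\rangle$ is a filtration of $Q/G$ in $V[G]$. Monotonicity and continuity are inherited from $\langle Q_\delta\rangle$: at a limit $\delta$, $Q_\delta = \bigcup_{\zeta<\delta}Q_\zeta$ gives $R_\delta = \bigcup_{\zeta<\delta}R_\zeta$; and $\bigcup_{\delta<\lambda}R_\delta = Q/G$ because $\bigcup_{\delta<\lambda}Q_\delta = Q$. Since $Q$ is $S$-layered, so is $P$, hence $P$ has the $\lambda$-c.c.; therefore $\lambda$ remains regular, $S$ stays stationary, $C$ stays club, and $|R_\delta| \le |Q_\delta| < \lambda$ in $V[G]$.

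The crux is to show $R_\delta \lessdot Q/G$ for $\delta \in S \cap C$. Let $\rho\colon Q \to Q_\delta$ be the reduct map attached to $Q_\delta \lessdot Q$, namely $\rho(a) = \prod\{r \in Q_\delta \mid r \ge a\}$, computed in $V$. Given $a \in Q/G$, I claim $\rho(a)$ is a reduction of $a$ into $R_\delta$: it lies in $R_\delta$ because $\rho(a) \ge a$ forces $\pi(\rho(a)) \ge \pi(a) \in G$. Now let $b \in R_\delta$ with $b \le \rho(a)$; by the defining property of the reduct (any nonzero element of $Q_\delta$ below $\rho(a)$ meets $a$) we have $a \cdot b \ne 0$ in $Q$. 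This is precisely where $2$-continuity enters: $\pi(a\cdot b) = \pi(a)\cdot\pi(b)$, and as $\pi(a),\pi(b) \in G$ this product lies in $G$, so $a \cdot b \in Q/G$ witnesses that $a$ and $b$ are compatible in $Q/G$. A companion use of $\rho$ shows that two elements of $R_\delta$ compatible in $Q/G$ are already compatible in $R_\delta$ (if $c \in Q/G$ lies below both, then so does $\rho(c) \in R_\delta$), so the inclusion preserves incompatibility. Together these give $R_\delta \lessdot Q/G$, and Lemma~\ref{layeredchar}(2), now applied in $V[G]$, yields that $Q/G$ is $S$-layered.

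I expect the genuine mathematical content to be the one-line compatibility computation driven by $2$-continuity; the main nuisance will be the bookkeeping around completions---verifying that $Q_\delta \lessdot Q$ passes to the completions and that the reduct map is available there---together with the routine check that the $\lambda$-c.c.\ of $P$ keeps $S$ stationary and $C$ club in $V[G]$.
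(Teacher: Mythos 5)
Your key computation is exactly the paper's: a reduct $b\in Q_\delta$ of $a$ with $b\geq a$ survives into the quotient because $2$-continuity gives $\pi(a\cdot c)=\pi(a)\cdot\pi(c)\in G$ for every $c\leq b$ in $Q_\delta$ with $\pi(c)\in G$. The difference, and the gap, is in how you produce such a $b$. You take the canonical reduct map $\rho(a)=\prod\{r\in Q_\delta\mid r\geq a\}$, which only lands in $Q_\delta$ when $Q_\delta$ is a \emph{complete} subalgebra of $Q$, and you justify this by ``replacing $Q$ and $P$ by their completions.'' That replacement is not harmless for this lemma: completing $Q_\delta$ inflates it to $\mathcal{B}(Q_\delta)$, whose cardinality can be as large as $2^{|Q_\delta|}$ and hence can reach or exceed $\lambda$ (e.g.\ $\lambda=\kappa^{+}$, $|Q_\delta|=\kappa$), destroying the requirement $|R_\delta|<\lambda$ in the definition of a filtration. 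If instead you keep $R_\delta=Q_\delta/G$ and compute $\rho$ in $\mathcal{B}(Q)$, then $\rho(a)$ lies in $\mathcal{B}(Q_\delta)$ but generally not in $Q_\delta$, so it is not a member of your $R_\delta$, and pulling it down into $Q_\delta$ while keeping $\pi(b)\in G$ requires a further argument you have not supplied. The paper's remark in Section~2 that completions are harmless refers to the particular posets used later, not to the general statement being proved here.

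The paper's fix avoids completions entirely: every $a\in Q$ has \emph{some} reduct $b_0\in Q_\delta$ (this needs only $Q_\delta\lessdot Q$, no infima), and $b_0$ is then a reduct of $a\cdot b_0$ with $b_0\geq a\cdot b_0$; hence the set $D=\{q\mid \exists b\in Q_\delta\,(b\geq q$ and $b$ is a reduct of $q)\}$ is dense in $Q$, and it suffices to find reducts in $R_\delta$ for the elements of $D\cap(Q/G)$, which is dense in $Q/G$. Your companion claim (compatibility in $Q/G$ of elements of $R_\delta$ reflects to $R_\delta$) does not actually need $\rho$ either: if $c\leq b_1,b_2$ with $c\in Q/G$, then $b_1\cdot b_2\in Q_\delta$ is nonzero and $\pi(b_1\cdot b_2)=\pi(b_1)\cdot\pi(b_2)\in G$ by $2$-continuity, so $b_1\cdot b_2\in R_\delta$. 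With the density device in place of $\rho$, the rest of your argument (preservation of the filtration, stationarity of $S$ and clubness of $C$ via the $\lambda$-c.c.\ of $P$, and the appeal to Lemma~\ref{layeredchar}) goes through as you describe.
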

\begin{proof}
 We may assume that $P$ and $Q$ are Boolean algebras. Remark that they need not be complete. Let $\langle \mathcal{B}_\delta \mid \delta < \lambda \rangle$ be a filtration of $Q$ with each $\mathcal{B}_\delta$ is a Boolean subalgebra of $Q$. Because $P$ has the $\lambda$-c.c., $S$ remains stationary in the extension by $P$. It is enough to prove that $\mathcal{B}_\delta \lessdot Q$ implies $P \force \mathcal{B}_\delta / \dot{G} \lessdot Q/\dot{G}$ for each $\delta$.

 Let $D = \{q \in Q \mid \exists b \in \mathcal{B}_\delta(b \geq q$ and $b$ is a reduct of $q)\}$. $D$ is dense in $Q$. For each $q \in Q$, $q$ has a reduct $b \in \mathcal{B}_\delta$. It is easy to see that $b$ is a reduct of $q \cdot b$. Thus, $q \cdot b \in D$ and this extends $q$.

 To show $P \force \mathcal{B}_\delta / \dot{G} \lessdot Q/\dot{G}$, take an arbitrary $p \in P$ and $q \in Q$ with $p \force q \in Q / \dot{G}$. We may assume $q \in D$. Thus, $q$ has a reduct $b \geq q$. Because of $p \leq \pi(q) \leq \pi(b)$, $p \force b \in \mathcal{B}_\delta / \dot{G}$. It suffices to show that $p \force \forall c \in \mathcal{B}_\delta / \dot{G}(c \leq b \to c\cdot q \in Q / \dot{G})$. For any $p' \leq p$ and $c \leq b$ with $p' \force c \in \mathcal{B}_\delta / \dot{G}$, Since $b$ is a reduct of $q$ and $\pi$ is $2$-continuous, $p' \leq \pi(c)\cdot \pi(q) = \pi(c \cdot q)$. Thus, $p' \force c \cdot q \in Q / \dot{G}$.
\end{proof}
We get an analogous result for centeredness, although we do not use this in the proof of Theorem~\ref{maintheorem}.
\begin{prop}\label{centeredquotient}
 For a projection $\pi:Q \to P$, suppose that $\pi$ is $<\omega$-continuous and $P \force Q$ is $\lambda$-centered. Then $P \force Q / \dot{G}$ is $\lambda$-centered.
\end{prop}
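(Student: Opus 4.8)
The plan is to mimic the proofs of Lemma~\ref{ccquotient} and Lemma~\ref{layeredquotient}: fix, in the generic extension, a centering of $Q$, and show that intersecting each centered piece with the quotient $Q/\dot{G}$ yields a centering of $Q/\dot{G}$. The one place where $<\omega$-continuity is needed is in checking that the witnessing lower bound of a finite subset of a piece survives into the quotient. As in the proof of Lemma~\ref{layeredquotient}, I would first reduce to the case where $P$ and $Q$ are Boolean algebras; this is harmless since $\lambda$-centeredness is preserved under taking completions and passes to dense suborders, and since $<\omega$-continuity only involves finite meets, which agree with the meets computed in $\mathcal{B}(Q)$ and $\mathcal{B}(P)$. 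Note also that for a finite $Z\subseteq Q$ the meet $\prod Z$ and the value $\pi(\prod Z)$ are computed by the ground-model Boolean operations, so these objects are absolute between $V$ and $V[G]$.

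So let $G$ be $(V,P)$-generic and work in $V[G]$. By hypothesis there is a centering $\langle C_\alpha \mid \alpha < \lambda \rangle$ of $Q$, i.e.\ $Q = \bigcup_{\alpha<\lambda} C_\alpha$ with each $C_\alpha$ centered. Recalling $Q/G = \{q \in Q \mid \pi(q) \in G\}$, I would set $D_\alpha = C_\alpha \cap (Q/G)$ for each $\alpha<\lambda$. Since $Q/G \subseteq Q = \bigcup_\alpha C_\alpha$, we get $Q/G = \bigcup_{\alpha<\lambda} D_\alpha$ at once, so it remains only to verify that each $D_\alpha$ is centered in $Q/G$. Fix $\alpha$ and a finite $Z \subseteq D_\alpha$. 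As $Z \subseteq C_\alpha$ and $C_\alpha$ is centered, $Z$ has a lower bound, equivalently $\prod^{Q} Z \neq 0$. Applying $<\omega$-continuity to $Z$ gives $\pi(\prod Z) = \prod \pi``Z = \prod_{q\in Z}\pi(q)$. Since each $q \in Z$ lies in $Q/G$ we have $\pi(q) \in G$, and as $G$ is a filter and $Z$ is finite, $\prod_{q\in Z}\pi(q) \in G$; hence $\pi(\prod Z) \in G$, that is $\prod Z \in Q/G$. Thus $\prod Z$ is a lower bound of $Z$ lying in $Q/G$, so $Z$ has a lower bound in $Q/G$. Therefore each $D_\alpha$ is centered, $\langle D_\alpha \mid \alpha<\lambda\rangle$ witnesses that $Q/G$ is $\lambda$-centered, and since $G$ was arbitrary, $P \force Q/\dot{G}$ is $\lambda$-centered.

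The only genuinely delicate point is the one just isolated: without $<\omega$-continuity a finite meet of conditions from the quotient need not remain in the quotient, because $\pi(\prod Z)$ could drop strictly below $\prod_{q\in Z}\pi(q)$ and thereby fall outside $G$; continuity is exactly what rules this out. Everything else is bookkeeping. In particular, and in contrast to Lemmas~\ref{ccquotient} and~\ref{layeredquotient}, no chain condition on $P$ is required here, since we never need to argue that an index set stays large or that a stationary set is preserved; we simply exhibit the $\lambda$-indexed centering $\langle D_\alpha \mid \alpha<\lambda\rangle$ directly.
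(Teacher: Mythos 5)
Your proof is correct and follows essentially the same route as the paper's: reduce to Boolean algebras, work in $V[G]$, intersect each centered piece of the given centering with $Q/G$, and use $<\omega$-continuity to see that the finite meet $\prod Z$ of a finite $Z\subseteq Q/G$ from one piece satisfies $\pi(\prod Z)=\prod\pi``Z\in G$ and hence stays in the quotient. Your explicit remarks on absoluteness of finite meets and on not needing any chain condition are correct but only make explicit what the paper leaves implicit.
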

\begin{proof}
 We may assume that $P$ and $Q$ are Boolean algebras. Let $G$ be an arbitrary $(V,P)$-generic filter. We discuss in $V[G]$. Let $\langle F_{\xi} \mid \xi < \lambda \rangle$ be a centering of $Q$. It is enough to prove that $p_{0},...,p_{n-1} \in F_{\xi}$ implies $p_0 \cdot p_1  \cdots  p_{n-1} \in Q / G \ast H$ for every $p_i \in Q/ G \ast H$. 

 Note that $\pi(p_i) \in G$ for each $i$. Since $F_{\xi}$ is a centered subset, $p_0 \cdot p_1 \cdots p_{n-1} \not= 0$ in $Q$. The $<\omega$-continuity implies $\pi(p_0 \cdot p_1 \cdots p_{n-1})= \pi(p_0) \cdot \pi(p_1) \cdots \pi(p_{n-1}) \in G$, as desired.
\end{proof}

The rest of this section is not used in the proof of Theorem \ref{maintheorem}, but is of independent interest. Refining Lemma \ref{ccquotient}, we characterize the $(\lambda,\lambda,<\mu)$-c.c. of the quotient forcing in terms of the following notion.
\begin{defi}
 For a projection $\pi:Q \to P$ between complete Boolean algebras, we say that $\pi$ is $(\lambda,\lambda,<\mu)$-nice if, for every $X \in [Q]^{\lambda}$, there is a $Y \in [Q]^{\lambda}$ with the following properties:
\begin{itemize}
 \item There is an injection $f:Y \to X$ such that $y \leq f(y)$ for all $y \in Y$.
 \item $\prod Z \not= 0$ and $\pi(\prod Z) = \prod \pi``Z$ for all $Z \in [Y]^{<\mu}$.
\end{itemize} 
\end{defi}

\begin{thm}\label{characterization}
 Suppose that $P$ is $(<\mu,\lambda)$-distributive, $\pi:Q \to P$ is a projection between complete Boolean algebras, and $Q$ has the $(\lambda,\lambda,<\mu)$-c.c. Then the following are equivalent.
\begin{enumerate}
 \item $\pi$ is $(\lambda,\lambda,<\mu)$-nice.
 \item $P \force Q/ \dot{G}$ has the $(\lambda,\lambda,<\mu)$-c.c.
\end{enumerate}
\end{thm}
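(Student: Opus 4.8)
The plan is to prove the two implications by rather different means. Throughout write $e\colon P\to Q$ for the complete embedding induced by $\pi$, and recall the two identities $e(\prod A)=\prod e``A$ and $\pi(e(p)\cdot q)=p\cdot\pi(q)$ proved at the start of Section~2, as well as the fact that the $(\lambda,\lambda,<\mu)$-c.c.\ passes from $Q$ to $P$ along $\pi$; in particular $P$ has the $(\lambda,\lambda,<\mu)$-c.c. The organizing observation is a local reformulation of continuity: since $\pi(\prod Z)\le\prod\pi``Z$ always holds, for $Z\in[Q]^{<\mu}$ with $\prod^{Q}Z\ne0$ the equality $\pi(\prod^{Q}Z)=\prod\pi``Z$ is equivalent to $\prod\pi``Z\force\prod^{Q}Z\in Q/\dot G$, i.e.\ to the assertion that $\prod\pi``Z$ forces $Z$ to have a lower bound in $Q/\dot G$. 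Using that $P$ is $(<\mu,\lambda)$-distributive I will also use that $\dot G$ is forced to be closed under products (meets in $P$) of $<\mu$ many of its members: if $\langle c_i\mid i<\delta\rangle$ with $\delta<\mu$ all lie in $\dot G$ then so does $\prod_i c_i$, because the set of conditions deciding every $c_i$ is dense (a generic would otherwise add a new function $\delta\to 2$). Granting these, continuity of $\pi$ on a family $Y$ for all $Z\in[Y]^{<\mu}$ becomes equivalent to the conjunction: $Y$ is $<\mu$-linked in $Q$, and $1_P$ forces that every $Z\in[Y]^{<\mu}$ with $Z\subseteq Q/\dot G$ has a lower bound in $Q/\dot G$.

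For (1)$\Rightarrow$(2) I would rerun the proof of Lemma~\ref{ccquotient}, replacing its single use of global $<\mu$-continuity by an application of niceness. Given $p$ and names $\dot q_\alpha$ with $p\force\dot q_\alpha\in Q/\dot G$, choose $r_\alpha$ with $\pi(r_\alpha)\le p$ and $\pi(r_\alpha)\force r_\alpha\le\dot q_\alpha$, and apply $(\lambda,\lambda,<\mu)$-niceness to $X=\{r_\alpha\mid\alpha<\lambda\}$ (the case $|X|<\lambda$ is trivial, since then a single $r$ witnesses $r\le\dot q_\alpha$ for $\lambda$ many $\alpha$, making that subfamily $<\mu$-linked in the quotient). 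This yields a refined family $\{y_\alpha\}$ with $y_\alpha\le r_\alpha$ that is $<\mu$-linked in $Q$ and on which $\pi$ is continuous. The $\lambda$-c.c.\ of $P$ produces a positive $b\le p$ forcing $\dot K=\{\alpha\mid\pi(y_\alpha)\in\dot G\}$ to have size $\lambda$, and then, exactly as in Lemma~\ref{ccquotient}, $b$ forces that for every $Z\in[\dot K]^{<\mu}$ the product $\prod_{\alpha\in Z}y_\alpha$ is a lower bound in $Q/\dot G$ of $\{\dot q_\alpha\mid\alpha\in Z\}$; one reduces to a ground-model $Z\in[K]^{<\mu}$ by $(<\mu,\lambda)$-distributivity, and the continuity $\pi(\prod_{\alpha\in Z}y_\alpha)=\prod_{\alpha\in Z}\pi(y_\alpha)$ furnished by niceness is exactly what places this product into the quotient.

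The substance is (2)$\Rightarrow$(1). Fix $X=\{x_\alpha\mid\alpha<\lambda\}\in[Q]^\lambda$. If $\lambda$ many $x_\alpha$ satisfy $\pi(x_\alpha)=0$, these lie in the part of $Q$ killed by $P$, where $\pi$ is vacuously continuous, and a $<\mu$-linked subfamily of size $\lambda$ supplied by the $(\lambda,\lambda,<\mu)$-c.c.\ of $Q$ already serves as $Y$; so assume $\pi(x_\alpha)\ne0$ for all $\alpha$. A $\lambda$-c.c.\ computation in $P$ (identical to the one producing $b$ above) yields $p_0$ forcing $|\{\alpha\mid\pi(x_\alpha)\in\dot G\}|=\lambda$, hence $p_0\force|X\cap(Q/\dot G)|=\lambda$. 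By (2) there are $p_1\le p_0$ and a name $\dot Y'$ with $p_1\force \dot Y'\in[X\cap Q/\dot G]^{\lambda}$ and $\dot Y'$ $<\mu$-linked in $Q/\dot G$. Set $b_\alpha=p_1\cdot||x_\alpha\in\dot Y'||$ and $K=\{\alpha\mid b_\alpha\ne0\}$; since $p_1$ forces $\dot Y'$ to have size $\lambda$ and the $x_\alpha$ are distinct, $|K|=\lambda$. Using that $P$ inherits the $(\lambda,\lambda,<\mu)$-c.c., thin $K$ to $K'\in[K]^{\lambda}$ so that $\{b_\alpha\mid\alpha\in K'\}$ is $<\mu$-linked in $P$. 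Finally put $y_\alpha=e(b_\alpha)\cdot x_\alpha$ for $\alpha\in K'$ and $Y=\{y_\alpha\mid\alpha\in K'\}$ with $f(y_\alpha)=x_\alpha$. Since $\pi(e(b_\alpha)\cdot x_\alpha)=b_\alpha\cdot\pi(x_\alpha)=b_\alpha\ne0$ (as $b_\alpha\le\pi(x_\alpha)$), each $y_\alpha\ne0$ and the $y_\alpha$ are distinct, so $f$ is a legitimate refinement.

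It remains to verify the two clauses of niceness for $Y$, and this is where the argument is most delicate. For continuity, fix $Z_0\in[K']^{<\mu}$ and a generic $G$ with $b_\alpha\in G$ for all $\alpha\in Z_0$ (so each $y_\alpha\in Q/G$); then $p_1\in G$, $\dot Y'$ is realized and $<\mu$-linked, and each $x_\alpha$ ($\alpha\in Z_0$) lies in $Y'$, so $\{x_\alpha\mid\alpha\in Z_0\}$ has a lower bound $w\in Q/G$. The candidate lower bound for $\{y_\alpha\mid\alpha\in Z_0\}$ is $w'=w\cdot e(\prod_{\alpha\in Z_0}b_\alpha)$: from $e(\prod A)=\prod e``A$ and $\pi(e(p)\cdot q)=p\cdot\pi(q)$ one gets $w'\le y_\alpha$ for every $\alpha\in Z_0$ and $\pi(w')=\pi(w)\cdot\prod_{\alpha\in Z_0}b_\alpha$, which lies in $G$ by the $<\mu$-closure of $\dot G$; hence $w'\in Q/G$ is the required lower bound, giving continuity on $Y$. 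For the clause $\prod^{Q}Z\ne0$, note that $\prod_{\alpha\in Z_0}b_\alpha\ne0$ by the choice of $K'$, so a generic $G$ containing this product witnesses, via the lower bound $w'$ just produced, that $\prod_{\alpha\in Z_0}^{Q}y_\alpha\ge w'\ne0$. The main obstacle is precisely this reverse direction: the $<\mu$-linked family $Y'$ handed to us by the quotient c.c.\ lives in $V[G]$ and depends on $G$, whereas niceness demands a single $Y\in V$ that is simultaneously $<\mu$-linked in $Q$ \emph{and} $\pi$-continuous. The device overcoming this is to read off the Boolean value $b_\alpha$ measuring how much of the generic puts $x_\alpha$ into $Y'$, renormalize $x_\alpha$ to $y_\alpha=e(b_\alpha)\cdot x_\alpha$, and pre-thin the $b_\alpha$ by the $(\lambda,\lambda,<\mu)$-c.c.\ of $P$; the two identities for $e$ and $\pi$, together with the $<\mu$-closure of the generic filter coming from $(<\mu,\lambda)$-distributivity, are what convert the quotient-theoretic lower bound $w$ into an honest lower bound $w'$ for the renormalized family, yielding both clauses at once.
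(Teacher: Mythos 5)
Your proposal is correct and follows essentially the same route as the paper: the forward direction by rerunning Lemma \ref{ccquotient} with niceness in place of continuity, and the reverse direction by reading off the Boolean values $b_\alpha$ of membership in the generic linked subfamily, renormalizing to $e(b_\alpha)\cdot x_\alpha$, thinning via the $(\lambda,\lambda,<\mu)$-c.c.\ inherited by $P$, and verifying continuity through the identities $e(\prod A)=\prod e``A$ and $\pi(e(p)\cdot q)=p\cdot\pi(q)$. The only (harmless) differences are presentational: you verify continuity semantically over a generic filter where the paper runs a purely algebraic chain of equalities, and your appeal to ``$<\mu$-closure of $\dot G$'' at the step $\pi(w)\cdot\prod_{\alpha}b_\alpha\in G$ is unnecessary, since a generic filter is closed under the finite meets actually used there.
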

\begin{proof}
The forward direction can be shown as in the proof of Lemma \ref{ccquotient}. We should check the inverse direction. Let $\{q_{\alpha}\mid \alpha < \lambda\} \subseteq Q$ be arbitrary. We let $b = || |\{\alpha < \lambda \mid \pi(q_{\alpha}) \in \dot{G}\}| = \lambda ||$ and $\dot{K}$ be a $P$-name for $\{\alpha < \lambda \mid \pi(q_{\alpha}) \in \dot{G}\}$. Since $P$ has the $\lambda$-c.c., $b \not= 0$. 
 By the definition of quotient forcing, $b \force \{q_{\alpha} \mid \alpha \in \dot{K}\} \subseteq Q/\dot{G}$. Because $P$ forces that $Q/\dot{G}$ has the $(\lambda,\lambda,<\mu)$-c.c., we can choose $\dot{K}'$ such that $b \force \dot{K}' \in [\dot{K}]^{\lambda}$ and $\prod_{\alpha \in Z}q_\alpha\not= 0$ for all $Z \in [\dot{K}']^{<\mu}$. 

 By the $\lambda$-c.c. of $P$, $K = \{\alpha< \lambda \mid b \cdot ||\alpha \in \dot{K}'||\not=0\}$ is of size $\lambda$. Define $p_{\alpha} = b \cdot ||\alpha \in \dot{K}'||$ for each $\alpha \in K$. There is a $K' \in [K]^{\lambda}$ with $\forall Z \in [K']^{<\omega} (\prod_{\alpha \in K'} p_{\alpha} \not = 0)$. 
 Observe that for every $Z \in [K']^{<\mu}$, $\prod_{\alpha \in Z} p_{\alpha}$ forces $\prod_{\alpha \in Z}q_{\alpha} \in Q / \dot{G}$, and thus, $\prod_{\alpha \in Z}p_\alpha = \prod_{\alpha \in Z} p_{\alpha} \cdot \pi(\prod_{\alpha \in Z}q_{\alpha})$.

 Let $r_{\alpha} = q_{\alpha} \cdot e(p_{\alpha})$, where $e$ is a complete embedding induced by $\pi$. We claim that $\prod_{\alpha \in Z}\pi(q_{\alpha}) = \pi(\prod_{\alpha \in Z} q_{\alpha})$ for every $Z \in [K']^{<\mu}$. This follows by:
 \begin{align*}
  \textstyle{\prod_{\alpha \in Z} \pi(r_{\alpha})} &= \textstyle{\prod_{\alpha \in Z} p_{\alpha}} \\
&= \textstyle{\prod_{\alpha \in Z}p_{\alpha} \cdot \pi(\prod_{\alpha \in Z}q_{\alpha})} \\ &= \textstyle{\pi(\prod_{\alpha \in Z}q_{\alpha} \cdot \prod_{\alpha \in Z}e(p_{\alpha}))} \\
&= \textstyle{\pi(\prod_{\alpha \in Z}q_{\alpha} \cdot e(p_{\alpha}))} = \textstyle{\pi(\prod_{\alpha \in Z}r_\alpha)}.
 \end{align*}
 Thus, $\{r_{\alpha} \mid \alpha \in K'\}$ witnesses to $(\lambda,\lambda,<\mu)$-nice.
\end{proof}

In particular, Knasterness of the quotient forcing can be characterized in term of projections as follows.
\begin{coro}\label{knasterchar}
 Suppose that $\pi:Q \to P$ is a projection between complete Boolean algebras and $Q$ is $\lambda$-Knaster. Then the following are equivalent.
\begin{enumerate}
 \item $\pi$ is $(\lambda,\lambda,2)$-nice.
 \item $P \force Q / \dot{G}$ is $\lambda$-Knaster.
\end{enumerate}
\end{coro}
We will show that Corollary \ref{knasterchar} is not meaningless, that is, (2) does not hold unconditionally. To see this, we use Todor\v{c}evi\'c's construction of a Suslin tree from a Cohen real.
\begin{lem}[Todor\v{c}evi\'c]
 There is an $\langle{e_\alpha:\alpha \to \omega \mid \alpha < \omega_1}\rangle$ with the following properties:
\begin{enumerate}
 \item $\{\xi < \alpha\mid e_\alpha(\xi) \not= e_\beta(\xi)\}$ is finite for all $\alpha < \beta$.
 \item $\{\xi < \alpha \mid e_\alpha(\xi) \leq n\}$ is finite for all $n < \omega$.
\end{enumerate}
\end{lem}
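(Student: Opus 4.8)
The plan is to construct the sequence $\langle e_\alpha \mid \alpha < \omega_1\rangle$ by transfinite recursion on $\alpha$, maintaining throughout two invariants: (i) for all $\alpha' < \beta' \le \alpha$ the set $\{\xi < \alpha' \mid e_{\alpha'}(\xi) \neq e_{\beta'}(\xi)\}$ is finite (coherence, condition (1)), and (ii) for every $\alpha' \le \alpha$ and every $n < \omega$ the set $\{\xi < \alpha' \mid e_{\alpha'}(\xi) \le n\}$ is finite (condition (2)). Write $f =^* g$ to abbreviate that $f$ and $g$ agree off a finite set. At the base I set $e_0 = \emptyset$, and at a successor I let $e_{\alpha+1} = e_\alpha \cup \{(\alpha,0)\}$; both invariants are trivially preserved, since appending a single value to a finite-to-one function keeps it finite-to-one, and restriction does not disturb coherence.

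The only real work is at a limit ordinal $\beta$. First I would fix a strictly increasing sequence $\langle \beta_n \mid n < \omega\rangle$ cofinal in $\beta$ with $\beta_0 = 0$, so that the half-open intervals $[\beta_n, \beta_{n+1})$ partition $\beta$. I then define, for $\xi \in [\beta_n, \beta_{n+1})$,
\[
 e_\beta(\xi) = \max\bigl(n,\, e_{\beta_{n+1}}(\xi)\bigr).
\]
The summand $n$ is what drives condition (2): if $e_\beta(\xi) \le m$ then $\xi \in \beta_{m+1}$, and on each of the finitely many blocks $[\beta_n, \beta_{n+1})$ with $n \le m$ the inductive finite-to-one property of $e_{\beta_{n+1}}$ leaves only finitely many $\xi$ with $e_{\beta_{n+1}}(\xi) \le m$; a finite union of finite sets is finite.

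For coherence I would first prove the auxiliary claim that $e_\beta \upharpoonright \beta_m =^* e_{\beta_m}$ for each $m$. On a block $[\beta_n, \beta_{n+1})$ with $n < m$, the inductive coherence gives $e_{\beta_{n+1}} =^* e_{\beta_m}\upharpoonright \beta_{n+1}$, while $\max(n, e_{\beta_{n+1}}(\xi)) = e_{\beta_{n+1}}(\xi)$ except on the finite set where $e_{\beta_{n+1}}(\xi) < n$ (finite by condition (2) for $e_{\beta_{n+1}}$); summing the finitely many blocks below $\beta_m$ proves the claim. The full coherence invariant then follows by transitivity of $=^*$: given any $\alpha < \beta$, choose $m$ with $\alpha \le \beta_m$, so that $e_\beta\upharpoonright \alpha =^* e_{\beta_m}\upharpoonright\alpha =^* e_\alpha$, the last step being the inductive hypothesis for $\alpha < \beta_m < \beta$ (or trivial when $\alpha = \beta_m$). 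This closes the recursion.

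I expect the limit stage to be the main obstacle, precisely because conditions (1) and (2) pull in opposite directions: condition (2) forces the values of $e_\beta$ to grow without bound along $\beta$, which is exactly why the raw gluing of the $e_{\beta_{n+1}}$ must be overwritten by the $\max(n,\cdot)$ correction, and the whole point is that this correction alters each relevant block in only finitely many places, so that coherence modulo finite survives. No genericity is needed for the lemma itself; the Cohen real enters only later, in the application building the Suslin tree.
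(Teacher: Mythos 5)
Your construction is correct. Note, however, that the paper itself gives no proof of this lemma: it is stated as a cited result of Todor\v{c}evi\'c, with the reader directed to the literature for details, so there is no in-paper argument to compare against. Your recursion is the standard one: the successor and base steps are trivial, and at a limit $\beta$ the gluing along a cofinal $\omega$-sequence $\langle\beta_n\rangle$ corrected by $\max(n,\cdot)$ does exactly what is needed --- the correction enforces condition (2) because $e_\beta(\xi)\le m$ forces $\xi<\beta_{m+1}$, while it perturbs each block $[\beta_n,\beta_{n+1})$ in only the finitely many places where $e_{\beta_{n+1}}(\xi)<n$ (finite by the inductive instance of (2)), so coherence modulo finite survives by transitivity of $=^*$ across the finitely many blocks below any fixed $\beta_m$. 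Both invariants are verified correctly and the argument closes.
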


\begin{prop}
 There is a projection $\pi:Q \to P$ between $\aleph_1$-Knaster posets such that $P \force Q / \dot{G}$ is not $\aleph_1$-Knaster. In particular, $\pi$ is not $(\aleph_1,\aleph_1,2)$-nice.
\end{prop}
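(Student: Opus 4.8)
The plan is to recognize the classical fact that a Cohen real adds a Suslin tree as an instance of Corollary \ref{knasterchar}. Let $P = \mathrm{Fn}(\omega,2)$ be the Cohen forcing; being countable it is $\sigma$-centered, hence $\aleph_1$-Knaster. Using the coherent sequence $\langle e_\alpha : \alpha < \omega_1 \rangle$ of the previous lemma together with the Cohen real, I would produce (following Todor\v{c}evi\'c) a $P$-name $\dot T$ for a Suslin tree, regarded as a notion of forcing in which two nodes are compatible exactly when they are comparable. Setting $Q = P \ast \dot T$ and letting $\pi : Q \to P$ be the projection to the first coordinate, we have $P \force Q/\dot G = \dot T$. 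In $V^{P}$ the tree $\dot T$ has $\aleph_1$ nodes while each of its chains is countable, so $\dot T$ is an uncountable set of conditions with no uncountable pairwise compatible subset (compatibility in $\dot T$ being comparability); thus $P$ forces that $Q/\dot G$ is not $\aleph_1$-Knaster. Granting that $Q$ is $\aleph_1$-Knaster, passing to completions (which preserves Knasterness and the relevant cardinalities) and applying the equivalence of Corollary \ref{knasterchar} then yields the failure of clause (1) there, i.e.\ $\pi$ is not $(\aleph_1,\aleph_1,2)$-nice, as claimed.

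For the tree I would start from the finite-modification tree attached to $\langle e_\alpha\rangle$, whose nodes at level $\beta$ are the functions $t : \beta \to \omega$ with $\{\xi < \beta : t(\xi) \neq e_\beta(\xi)\}$ finite: clause (1) of the lemma guarantees that this is a tree with countable levels, and clause (2), which forces each $e_\alpha$ to tend to infinity, is what allows the Cohen real to separate incomparable nodes cofinally. The Cohen real is then used, as in Todor\v{c}evi\'c's construction, to define from $\langle e_\alpha \rangle$ the name $\dot T$ and to seal antichains: a density argument shows that every $P$-name for a maximal antichain of $\dot T$ is forced to be countable, so $P$ forces that $\dot T$ is Suslin. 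I would cite this part rather than reprove it, as it is exactly Todor\v{c}evi\'c's theorem that a Cohen real adds a Suslin tree.

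The one place that needs real work, and the main obstacle, is to check that $Q = P \ast \dot T$ is $\aleph_1$-Knaster in $V$. I would argue directly on a family $\{(p_\alpha,\dot t_\alpha) : \alpha < \omega_1\}$. Since $P$ is countable, I may first thin out to an uncountable set on which $p_\alpha$ equals a fixed condition $p$ and on which $p$ decides the finite data describing $\dot t_\alpha$. A $\Delta$-system argument then normalizes this data so that the finite supports form a $\Delta$-system with constant behaviour on the root and mutually isomorphic configurations off the root. The crux is that, for any two indices, genericity of the Cohen part lets one extend $p$ to some $p' \leq p$ forcing $\dot t_\alpha$ and $\dot t_\beta$ to be compatible in $\dot T$; as this extension is found separately for each pair, it never amalgamates uncountably many nodes into a single branch and so is fully consistent with $\dot T$ being Suslin. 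Carrying this out therefore exhibits the normalized family as pairwise compatible, which is Knasterness. Making the amalgamation step precise --- arranging the tree so that compatibility of two normalized nodes is a dense, hence generically attainable, phenomenon while antichains remain countable --- is the heart of Todor\v{c}evi\'c's construction, and it is precisely here that coherence (1) and the finiteness property (2) of $\langle e_\alpha\rangle$ are used.
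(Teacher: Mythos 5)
Your proposal follows the paper's proof essentially verbatim: both take $P$ to be Cohen forcing, $Q = P \ast \dot{T}$ for Todor\v{c}evi\'c's Cohen-real Suslin tree built from the coherent sequence $\langle e_\alpha \mid \alpha<\omega_1\rangle$, observe that the quotient is the non-Knaster tree $\dot{T}$ itself, and reduce the whole proposition to checking that $Q$ is $\aleph_1$-Knaster. The one step you leave as a sketch --- thinning to a constant Cohen condition $p$, running a $\Delta$-system on the finite sets $\{\xi \mid e_{\alpha_i}(\xi)\leq |p|\}$, and then explicitly building a common extension $q$ of $p$ that forces two normalized nodes $\dot{c}\circ e_{\alpha_i}\upharpoonright\beta_i$ and $\dot{c}\circ e_{\alpha_j}\upharpoonright\beta_j$ to be comparable (using coherence and the finite-to-one property exactly where you say they are used) --- is precisely the amalgamation the paper's proof carries out in detail, so your outline matches its argument step for step.
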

\begin{proof}

Let $C$ be a Cohen forcing, that is, $C = {^{<\omega} \omega}$. Let $\dot{c}$ be a $C$-name such that $C \force \dot{c} = \bigcup \dot{G}$. Todor\v{c}evi\'c showed that $C$ forces that the poset $\dot{T} = \{\dot{c} \circ e_\alpha \upharpoonright \beta \mid \beta \leq \alpha <\omega_1\}$, ordered by reverse inclusion, has the $\aleph_1$-c.c. and is not $\aleph_1$-Knaster. We refer to~\cite{MR2355670} for more details.

 Let $P = C$, $Q = C \ast \dot{T}$ and $\pi:Q \to P$ be a natural projection. Of course, $P \force Q / \dot{G} \simeq \dot{T}$ is not $\aleph_1$-Knaster. 

 It remains to show that $Q$ is $\aleph_1$-Knaster. Let $X = \{ \langle p_i,\dot{c} \circ e_{\alpha_i} \upharpoonright \beta_i\rangle \mid i < \omega_1 \}$ be arbitrary. Shrinking $X$, there are $K \in [\omega_1]^{\omega_1}$ and $p$ such that $p_i = p$ for all $i\in K$. For each $i \in K$, $a_i = \{\xi < \alpha_i \mid e_{\alpha_i}(\xi) \leq |p|\}$ is finite. The usual $\Delta$-system argument takes $K' \in [\omega_1]^{\omega_1}$ and $r$ such that $a_i \cap a_j = r$ for each $i<j$ in $K'$. Note that the number of functions that has a form of $e_\alpha \upharpoonright r$ is $\omega$ at most. There is a $K'' \in [K']^{\omega_1}$ such that $e_{\alpha_i} \upharpoonright a = e_{\alpha_j} \upharpoonright a$ for each $i < j$ in $K''$. We claim that any two elements in $Y = \{\langle p_i,\dot{c} \circ e_{\alpha_i} \upharpoonright \beta_i\rangle \mid i \in K''\}$ are compatible.

 Fix a pair $i < j$ in $K''$. For every $\xi$, if $e_{\alpha_i}(\xi), e_{\alpha_j}(\xi) < |p|$ then $\xi \in r$, which in turn implies $e_{\alpha_i}(\xi) = e_{\alpha_j}(\xi)$. This ensures us, for every $\xi$ with $e_{\alpha_i}(\xi) \not= e_{\alpha_j}(\xi)$, one of the following holds:
\begin{itemize}
 \item $e_{\alpha_i}(\xi),e_{\alpha_j}(\xi) \geq |p|$.
 \item $e_{\alpha_i}(\xi) \geq |p|$ and $e_{\alpha_j}(\xi) < |p|$.
 \item $e_{\alpha_j}(\xi) \geq |p|$ and $e_{\alpha_i}(\xi) < |p|$.
\end{itemize}
 Since $\Delta = \{\xi \mid e_{\alpha_{i}}(\xi) \not= e_{\alpha_{i}}(\xi)\}$ is finite, $m = \max (e_{\alpha_i}``\Delta) \cup (e_{\alpha_{j}}``\Delta) + 1$ is a natural number.
 Define $q \in {^{m}\omega}$ by 
\begin{center}
 $q(n) = \begin{cases}p(n) & n < |p|\\
	    p(e_{\alpha_j}(\xi)) & \text{there is a }\xi\text{ such that }n = e_{\alpha_i}(\xi)\text{ and }e_{\alpha_j}(\xi)<|p|\\
	    p(e_{\alpha_i}(\xi)) & \text{there is a }\xi\text{ such that }n = e_{\alpha_j}(\xi)\text{ and }e_{\alpha_i}(\xi)<|p|\\
	  0 & \text{otherwise}
	   \end{cases}$
\end{center}
 It is easy to see that $\langle q,\dot{c} \circ e_{\alpha_k} \upharpoonright \beta_k\rangle$ is a common extension of $\langle p,\dot{c} \circ e_{\alpha_i} \upharpoonright \beta_i\rangle$ and $\langle p,\dot{c} \circ e_{\alpha_j} \upharpoonright \beta_j\rangle$, here $k$ is $i$ or $j$ such that $\beta_i,\beta_j \leq \beta_k$.
\end{proof}

\section{Diagonal product of Levy collapses}

In this paper, we use a slight modification of Levy collapse. First, we write $[\kappa,\lambda)_{\mu\text{-cl}}$ for the set of all $\mu$-closed cardinal in $[\kappa,\lambda)$. Here, a $\mu$-closed cardinal is a cardinal $\gamma$ with ${\gamma}^{<\mu} = \gamma$. For regular cardinals $\mu < \kappa$, our Levy collapse $\mathrm{Coll}(\mu,<\kappa)$ is the $<\mu$-support product $\prod_{\gamma \in [\mu^{+},\kappa)_{\mu\text{-cl}}}^{<\mu} {^{<\mu}\gamma}$. $\mathrm{Coll}(\mu,<\kappa)$ is $\mu$-directed closed. We remark that our Levy collapse $\mathrm{Coll}(\mu,<\kappa)$ is forcing equivalent to the usual one and $\mathrm{Coll}(\mu,<\kappa)$ has the $\kappa$-c.c. if $\kappa$ is inaccessible.

For regular cardinals $\mu < \kappa$, the diagonal product of Levy collapses is $P(\mu,\kappa) = \prod_{\alpha \in [\mu,\kappa) \cap \mathrm{Reg}}^{<\mu}\mathrm{Coll}(\alpha,<\kappa)$. It is easy to see that $P$ is $<\mu$-directed closed. 
\begin{lem}
 If $\kappa$ is inaccessible, then $P(\mu,\kappa)$ has the $(\kappa,\kappa,<\mu)$-c.c.
\end{lem}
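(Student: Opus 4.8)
The plan is to prove the $(\kappa,\kappa,<\mu)$-c.c. by a two-step $\Delta$-system argument followed by a pigeonhole, using the inaccessibility of $\kappa$ (regularity together with the strong limit property) at every step. Fix $X=\{p_i \mid i<\kappa\}\subseteq P(\mu,\kappa)$; I must extract $Y\in[X]^{\kappa}$ all of whose $<\mu$-sized subsets have lower bounds. Write $s_i=\mathrm{supp}(p_i)\subseteq[\mu,\kappa)\cap\mathrm{Reg}$, so $|s_i|<\mu$; for $\alpha\in s_i$ write $t_{i,\alpha}=\mathrm{supp}(p_i(\alpha))$, so $|t_{i,\alpha}|<\alpha$, and record the fine data $p_i(\alpha)(\gamma)\in {}^{<\alpha}\gamma$.

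First I would thin out the $\alpha$-supports. Since $\kappa$ is inaccessible, $\beta^{<\mu}<\kappa$ for all $\beta<\kappa$, so the $\Delta$-system lemma yields $I_0\in[\kappa]^{\kappa}$ and a root $R_0$ with $s_i\cap s_j=R_0$ for distinct $i,j\in I_0$. Crucially $R_0$ is a \emph{fixed} set with $|R_0|<\mu<\kappa$, so by regularity $\alpha^{*}:=\sup R_0<\kappa$. Off the root the $\alpha$-supports are disjoint, so the only possible conflicts between $p_i$ and $p_j$ (for $i,j\in I_0$) occur at coordinates $\alpha\in R_0$. I therefore localize to the ``dangerous'' part: let $F_i$ collect the triples $(\alpha,\gamma,\xi)$ with $\alpha\in R_0$, $\gamma\in t_{i,\alpha}$, and $\xi\in\mathrm{dom}(p_i(\alpha)(\gamma))$. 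For each $\alpha\in R_0$ there are $<\alpha\le\alpha^{*}$ relevant $\gamma$'s and $<\alpha$ relevant $\xi$'s, and $|R_0|<\mu$, so $|F_i|\le\mu\cdot\alpha^{*}=:\nu^{*}<\kappa$ is bounded by a single cardinal $<\kappa$. Applying the $\Delta$-system lemma once more (valid since $\beta^{\nu^{*}}\le 2^{\max(\beta,\nu^{*})}<\kappa$ by strong limitness) gives $I_1\in[I_0]^{\kappa}$ and a root $D$ with $F_i\cap F_j=D$ for distinct $i,j\in I_1$.

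The step I expect to be the main obstacle is forcing the conditions to actually \emph{agree} on the shared coordinates in $D$: a priori the values $p_i(\alpha)(\gamma)(\xi)$ range over $\gamma$, which may be cofinal in $\kappa$, so there could be $\kappa$-many value-patterns on $D$ and a naive pigeonhole would fail. The resolution is that $D$ is a fixed set of size $\le\nu^{*}<\kappa$; hence the set of ordinals $\gamma$ occurring in $D$ is bounded below $\kappa$ by regularity, all relevant values lie below some $\gamma^{*}<\kappa$, and there are only $(\gamma^{*})^{\nu^{*}}<\kappa$ possible assignments of values on $D$. Pigeonholing the $\kappa$ indices of $I_1$ into these $<\kappa$ classes and using the regularity of $\kappa$ produces $I_2\in[I_1]^{\kappa}$ on which all $p_i$ agree on $D$. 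I claim $Y=\{p_i\mid i\in I_2\}$ works. For any $Z\in[I_2]^{<\mu}$, the coordinatewise union $\bigcup_{i\in Z}p_i$ has no conflicts (supports are disjoint off $R_0$, and on $R_0$ any fine coordinate shared by two distinct $p_i,p_j$ lies in $D$, where they agree), so it is a function; and it is a legitimate condition because the support-size constraints survive the union: the $\alpha$-support $R_0\cup\bigcup_{i\in Z}(s_i\setminus R_0)$ has size $<\mu$ since $\mu$ is regular and $|Z|<\mu$, while for each $\alpha\in R_0$ the $\gamma$-support $\bigcup_{i\in Z}t_{i,\alpha}$ and each domain $\bigcup_{i\in Z}\mathrm{dom}(p_i(\alpha)(\gamma))$ are unions of $<\alpha$-sized sets indexed by $|Z|<\mu\le\alpha$, hence of size $<\alpha$ by the regularity of $\alpha$. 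Thus $\bigcup_{i\in Z}p_i$ is a lower bound for $\{p_i\mid i\in Z\}$, completing the argument.
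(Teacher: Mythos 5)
Your proof is correct and follows essentially the same route as the paper's: a $\Delta$-system on the outer supports, a second $\Delta$-system on the flattened domains over the first root, and then agreement on the second root (which the paper packages into the condition $p(\alpha)\upharpoonright\{\xi\mid\langle\alpha,\xi\rangle\in r'\}=q(\alpha)$ and you obtain by an explicit pigeonhole). The only difference is that you spell out the counting and the verification that the coordinatewise union is a condition, which the paper leaves as ``easy to see.''
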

\begin{proof}
 For any $X \in [P(\mu,\kappa)]^{\kappa}$, the usual $\Delta$-system argument takes $Y \in [X]^{\kappa}$ and $r$ with the following properties:
\begin{itemize}
 \item $\{\mathrm{supp}(p) \mid p \in Y\}$ is a $\Delta$-system with root $r$.
 \item $r \subseteq \kappa$ is bounded by some regular cardinal $\eta < \kappa$. 
\end{itemize}
 For each $\alpha \in r$, we can see that $p(\alpha)$ is a partial function from  $\alpha \times [\alpha,\kappa)_{\alpha^{+}\text{-cl}}$ to $\kappa$. Note that $|\bigcup_{\alpha \in r}\{\alpha\} \times \mathrm{dom}(p(\alpha))| < \eta$. Again, the usual $\Delta$-system argument takes $Y' \in [Y]^{\kappa}$, $r'$ and $q$ such that
\begin{itemize}
 \item $\{\{\alpha\} \times \mathrm{dom}(p(\alpha)) \mid \alpha \in Y'\}$ is a $\Delta$-system with root $r'$.
 \item $q \in P(\mu,\kappa)$. 
 \item For all $p \in Y'$ and $\alpha \in r$, $p(\alpha) \upharpoonright \{\xi \mid \langle{\alpha,\xi}\rangle \in r'\} = q(\alpha)$.
\end{itemize}
It is easy to see that $Y'$ works.
\end{proof}
From this, $P(\mu,\kappa)$ forces $\mu^{+} = \kappa$ if $\kappa$ is inaccessible. 
\begin{lem}\label{diagonallayered}
Suppose $\kappa$ is inaccessible.
\begin{enumerate}
 \item If $\kappa$ is Mahlo, then $P(\mu,\kappa)$ is $[\mu,\kappa) \cap \mathrm{Reg}$-layered. 
 \item If $\kappa$ is not Mahlo, then $P(\mu,\kappa)$ is not $S$-layered for all stationary subsets $S \subseteq \kappa$.
\end{enumerate}
\end{lem}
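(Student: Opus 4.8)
The plan is to work with the natural filtration $\langle P_\delta \mid \delta<\kappa\rangle$ given by $P_\delta = P(\mu,\kappa)\cap V_\delta$, i.e. the set of conditions all of whose \emph{content} (outer coordinates $\alpha$, inner coordinates $\gamma$, and values) lies below $\delta$. This is a filtration in the sense of Section 2: it is $\subseteq$-increasing and continuous since $V_\delta=\bigcup_{\zeta<\delta}V_\zeta$ at limits, its union is $P(\mu,\kappa)$ because every condition is a set, and $|P_\delta|<\kappa$ by inaccessibility. By Lemma~\ref{layeredchar} both clauses reduce to deciding, for $\delta$ in a club (say of limit cardinals above $\mu$), whether $P_\delta\lessdot P(\mu,\kappa)$. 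The heart of the argument is the dichotomy: for such $\delta$, $P_\delta\lessdot P(\mu,\kappa)$ holds if and only if $\delta$ is regular.

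In the regular case, each condition $p$ has outer support of size $<\mu$ and, at every outer coordinate $\alpha$, inner support of size $<\alpha$; when $\delta$ is regular all of this data that lies below $\delta$ is bounded below $\delta$. Hence $p$ factors as a product $p^{A}\cdot p^{B}$, with $p^{A}\in P_\delta$ collecting the coordinates and values $<\delta$ and $p^{B}$ the rest, yielding a poset isomorphism $P(\mu,\kappa)\cong P_\delta\times R$. Since a factor of a product is a complete suborder, $P_\delta\lessdot P(\mu,\kappa)$.

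For the singular case, which I expect to be the main obstacle, the point is that cofinal content is produced not by the $<\mu$ outer support but by the $<\alpha$ \emph{inner} support at outer coordinates $\alpha$ exceeding $\mathrm{cf}(\delta)$. Given singular $\delta$ with $\mathrm{cf}(\delta)=\rho$, I would fix a regular $\alpha$ with $\rho<\alpha<\delta$ together with a cofinal sequence $\langle\gamma_j\mid j<\rho\rangle$ of $\alpha$-closed cardinals in $[\alpha^{+},\delta)$, and take $p$ to be the single-outer-coordinate condition with $p(\alpha)(\gamma_j)=\langle 0\rangle$ for all $j$; as $\rho<\alpha$ this is legitimate and its content is cofinal in $\delta$. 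Then $p$ has no reduction into $P_\delta$: any $r\in P_\delta$ has inner content at $\alpha$ bounded by some $\beta<\delta$, and choosing $j$ with $\gamma_j>\beta$ one extends $r$ inside $P_\delta$ by setting the value $\langle 1\rangle$ at $(\alpha,\gamma_j)$, obtaining $q\le r$ in $P_\delta$ incompatible with $p$ in the tree ${}^{<\alpha}\gamma_j$. Thus no $r$ reduces $p$, so $P_\delta\not\lessdot P(\mu,\kappa)$.

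Granting the dichotomy, both parts follow from the standard fact that the regular cardinals below $\kappa$ form a stationary set exactly when $\kappa$ is Mahlo (pass between regular cardinals and inaccessibles by intersecting with the club of strong-limit cardinals). For (1): if $\kappa$ is Mahlo then $S=[\mu,\kappa)\cap\mathrm{Reg}$ is stationary, $P_\delta\lessdot P(\mu,\kappa)$ for every $\delta\in S$ by the regular case, and the filtration is continuous, so Lemma~\ref{layeredchar} gives that $P(\mu,\kappa)$ is $S$-layered. For (2): were $P(\mu,\kappa)$ to be $S$-layered for some stationary $S$, applying Lemma~\ref{layeredchar} to $\langle P_\delta\rangle$ would yield a club $C$ with $P_\delta\lessdot P(\mu,\kappa)$ for all $\delta\in S\cap C$; refining $C$ to limit cardinals, the dichotomy forces every such $\delta$ to be regular, so $S\cap C$ is a stationary set of regular cardinals and $\kappa$ is Mahlo, contrary to hypothesis.
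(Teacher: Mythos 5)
Your proof is correct and takes essentially the same route as the paper: the dichotomy you isolate is exactly Lemma~\ref{regularchar} (the paper's filtration $\bigcup_{\eta<\delta}P(\mu,\eta)$ agrees with your $P(\mu,\kappa)\cap V_\delta$ on a club, a condition lying in $V_\delta$ precisely when its content is \emph{bounded} below $\delta$), the singular-case witness is the same single-outer-coordinate condition with inner domain cofinal in $\delta$, and the passage to Lemma~\ref{diagonallayered} via Lemma~\ref{layeredchar} and the Mahlo characterization is identical. Two cosmetic touch-ups: your gloss of $P_\delta$ as the conditions whose content ``lies below $\delta$'' should read ``is bounded below $\delta$'' (otherwise your $p$ would itself belong to $P_\delta$), and the club should be refined to ordinals closed under $\eta\mapsto\eta^{<\eta}$ so that the $\alpha$-closed cardinals are in fact cofinal in the singular $\delta$.
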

To prove Lemma \ref{diagonallayered}, we need the following lemma.
\begin{lem}\label{regularchar}Suppose $\kappa$ is inaccessible.
 \begin{enumerate}
  \item $P(\mu,\delta) \lessdot P(\mu,\kappa)$ for all $\delta < \kappa$.
  \item There is a club $C$ such that $\bigcup_{\eta < \delta} P(\mu,\eta) \lessdot P(\mu,\kappa)$ if and only if $\delta$ is regular for all $\delta \in C$.
 \end{enumerate}
\end{lem}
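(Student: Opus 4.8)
\section*{Proof plan for Lemma \ref{regularchar}}

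The plan is to handle the two parts separately, exploiting the product structure of $P(\mu,\kappa)$ together with the absoluteness of $\alpha$-closedness: a cardinal $\gamma$ lies in $[\alpha^{+},\delta)_{\alpha\text{-cl}}$ iff it lies in $[\alpha^{+},\kappa)_{\alpha\text{-cl}}\cap\delta$, so the coordinates available to $P(\mu,\delta)$ are exactly the coordinates of $P(\mu,\kappa)$ whose collapsed cardinal is below $\delta$. For (1) I would realize $P(\mu,\delta)$ as the range of the restriction map $\pi\colon P(\mu,\kappa)\to P(\mu,\delta)$ defined by $\pi(p)(\alpha)=p(\alpha)\upharpoonright\delta$, i.e.\ discard every outer coordinate $\alpha\geq\delta$ and, within each $\alpha<\delta$, every collapsed cardinal $\geq\delta$. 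This $\pi$ is order preserving, fixes $P(\mu,\delta)$ pointwise, and satisfies $p\leq\pi(p)$. To see that $P(\mu,\delta)\lessdot P(\mu,\kappa)$ I would check the two defining properties of a complete embedding via a gluing argument: given $q\in P(\mu,\kappa)$ and $p'\leq\pi(q)$ in $P(\mu,\delta)$, let $r$ agree with $p'$ on coordinates below $\delta$ and with $q$ on coordinates $\geq\delta$. The two pieces have disjoint domains; the outer support of $r$ is contained in $\mathrm{supp}(p')\cup\mathrm{supp}(q)$ (of size $<\mu$ since $\mu$ is regular) and each inner support is a union of two sets of size $<\alpha$, so $r\in P(\mu,\kappa)$ with $r\leq p'$ and $r\leq q$. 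Thus $\pi(q)$ is a reduction of $q$; the same restriction of a common extension shows incompatibility is absolute between the two posets, which is all that is needed.

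For (2) I would fix once and for all the club $C=\{\delta<\kappa : \delta=\beth_\delta\}$, which is closed and unbounded because $\kappa$ is inaccessible. Each $\delta\in C$ is a strong limit cardinal, so for every regular $\alpha<\delta$ the $\alpha$-closed cardinals in $(\alpha,\delta)$ are cofinal in $\delta$ (for instance $\beth_\xi$ with $\xi<\delta$ of cofinality $\geq\alpha$, since then $\beth_\xi^{\,<\alpha}=\beth_\xi$). Writing $U=\bigcup_{\eta<\delta}P(\mu,\eta)$, I first note $U\subseteq P(\mu,\delta)$, so the question is whether the inclusion exhausts $P(\mu,\delta)$. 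If $\delta$ is regular, a short cardinality computation shows $U=P(\mu,\delta)$: a condition $p\in P(\mu,\delta)$ has $<\mu$ outer coordinates, each carrying $<\alpha<\delta$ collapsed cardinals, so the set of collapsed cardinals occurring in $p$ has size $<\delta$ and, by regularity of $\delta$, supremum some $\eta<\delta$; hence $p\in P(\mu,\eta)\subseteq U$. Combined with part (1) this gives $U=P(\mu,\delta)\lessdot P(\mu,\kappa)$, establishing the ``regular'' direction.

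The heart of the matter, and the step I expect to be the main obstacle, is the singular case, where I must produce a single condition with \emph{no} reduction in $U$. Let $\nu=\mathrm{cf}(\delta)<\delta$, choose a regular $\alpha$ with $\nu<\alpha<\delta$, and pick $\alpha$-closed cardinals $\langle\delta_i : i<\nu\rangle$ cofinal in $\delta$ (available since $\delta\in C$). Let $q$ have outer support $\{\alpha\}$ with $q(\alpha)(\delta_i)=\langle 0\rangle$ for all $i<\nu$; this lies in $P(\mu,\delta)\subseteq P(\mu,\kappa)$ precisely because $\nu<\alpha$ keeps the inner support small, and since the $\delta_i$ are cofinal we have $q\notin U$. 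I claim no $p\in U$ reduces $q$: any such $p$ lies in some $P(\mu,\eta)$ with $\eta<\delta$, so I may choose $i$ with $\delta_i\geq\eta$ and extend $p$ to $p'$ by setting $p'(\alpha)(\delta_i)=\langle 1\rangle$. Then $p'\in P(\mu,\delta_i+1)\subseteq U$ with $p'\leq p$, yet $p'\perp q$ because they assign incompatible values at the coordinate $(\alpha,\delta_i)$. Hence every candidate reduction has a $U$-extension incompatible with $q$, so $q$ has no reduction in $U$ and $U\not\lessdot P(\mu,\kappa)$. The only delicate points are arranging cofinally many $\alpha$-closed cardinals below $\delta$ and verifying $q\in P(\mu,\delta)$; both are secured by the choice of $C$ and by $\nu<\alpha$, so the same club $C$ simultaneously witnesses both directions of the equivalence.
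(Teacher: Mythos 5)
Your proposal is correct and follows essentially the same route as the paper: the same kind of club of (strong) limit cardinals closed under $\eta\mapsto\eta^{<\eta}$, the regular case via $\bigcup_{\eta<\delta}P(\mu,\eta)=P(\mu,\delta)$ together with part (1), and the singular case via a condition with outer support a single regular $\alpha\in(\mathrm{cf}(\delta),\delta)$ whose inner support is a cofinal sequence of $\alpha$-closed cardinals below $\delta$. The only differences are cosmetic (the paper's witnessing condition takes the values $\{\langle 0,\delta_{i-1}\rangle\}$ rather than your constant value, a choice made so the same condition can be reused for Lemma \ref{singularcase}), and you spell out the ``no reduct'' verification that the paper leaves implicit.
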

\begin{proof}
(1) is easy. Let us see $(2)$. It is easy to see $P(\mu,\delta) \supseteq \bigcup_{\eta < \delta}P(\mu,\eta)$. 

Let $C = \{\delta < \kappa \mid \forall \eta < \delta(\eta^{<\eta} < \delta)$ and $\delta$ is a limit cardinal$\}$. $C$ is a club in $\kappa$. Note that $\sup [\alpha^{+},\delta)_{\alpha\text{-cl}} = \delta$ for each $\delta \in C$ and $\alpha < \delta$. 

In the case of $\delta \in C$ regular, $P(\mu,\delta) = \bigcup_{\eta < \delta}P(\mu,\eta) \lessdot P(\mu,\kappa)$, by (1). If $\delta \in C$ is singular, there is a regular cardinal $\alpha$ with $\mathrm{cf}(\delta) < \alpha < \delta$. Then $\sup[\alpha^{+},\delta)_{\alpha\text{-cl}} = \delta$. Let $\{\delta_i \mid i < \mathrm{cf}(\delta)\} \subseteq [\alpha^{+},\delta)_{\alpha\text{-cl}}$ be a sequence which converges to $\delta$. Define $p \in P(\mu,\delta)$ by, 
\begin{itemize} 
 \item $\mathrm{supp}(p) = \{\alpha\}$.
 \item $p(\alpha) \in \mathrm{Coll}(\alpha,<\delta)$ is such that 
       \begin{itemize}
	\item $\mathrm{dom}(p(\alpha)) = \{\delta_i \mid i < \mathrm{cf}(\delta)\}$, and
	\item $p(\alpha)(\delta_i) = \begin{cases} \{\langle 0,\delta_{i-1}\rangle\} &i\text{ is successor ordinal} \\ \{\langle 0,0\rangle\} &\text{otherwise}  \end{cases}$
       \end{itemize}
\end{itemize}
It is easy to see $p(\alpha) \in \mathrm{Coll}(\alpha,<\delta) \setminus \bigcup_{\eta < \delta}\mathrm{Coll}(\alpha,<\eta)$. In particular, $p$ does not have a reduct in $\bigcup_{\eta < \delta} P(\mu,\eta)$.
\end{proof}

\begin{proof}[Proof of Lemma \ref{diagonallayered}]
 Let $C$ be a club in Lemma~\ref{regularchar}. For (1), by Lemma \ref{regularchar}, $P(\mu,\kappa)$ is $[\mu,\kappa)\cap \mathrm{Reg}$-layered witnessed by $\langle P(\mu,\delta) \mid \delta < \kappa \rangle$. 

 For (2), by the assumption, there is a club $D \subseteq C$ such that every element in $D$ are singular. Define $Q_\delta = \bigcup_{\eta<\delta}P(\mu,\eta)$. $\langle Q_{\delta} \mid \delta < \kappa\rangle$ is a filtration of $P(\mu,\kappa)$. By Lemma \ref{singularcase}, $Q_\delta \not\mathrel{\lessdot} P(\mu,\kappa)$ for all $\delta \in D$. By Lemma \ref{layeredchar}, $P(\mu,\kappa)$ is not $S$-layered for all stationary subsets $S \subseteq \kappa$. 
\end{proof}

The following lemma is contained in the proof of Lemma \ref{regularchar} (2), and is used in the proof of Claim \ref{layeredconclusion}.
\begin{lem}\label{singularcase}
 For inaccessible $\kappa$, let $C$ be a club in Lemma \ref{regularchar}.(2). For every singular $\delta \in C$, there is a $p \in P(\mu,\delta) \setminus \bigcup_{\eta < \delta} P(\mu,\eta)$ with the following properties:
\begin{itemize}
 \item $\mathrm{supp}(p) \cap (\lambda + 1) = \emptyset$, and, 
 \item For every $q \in \bigcup_{\eta < \delta} P(\mu,\eta)$, there is an $r \in \bigcup_{\eta < \delta} P(\mu,\eta)$ such that $\mathrm{dom}(r) \cap (\lambda + 1) = \emptyset$, $r \perp p$ in $P(\mu,\delta)$ and $r \cdot q \in \bigcup_{\eta < \delta} P(\mu,\eta)$.
\end{itemize}
\end{lem}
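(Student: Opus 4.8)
The plan is to reuse, almost verbatim, the condition $p$ built in the proof of Lemma~\ref{regularchar}(2), taking a little extra care that its support lies above $\lambda$, and then to produce the witnessing condition $r$ by exploiting that the inner domain of $p(\alpha)$ is cofinal in $\delta$. Concretely, since $\delta \in C$ is a singular limit cardinal (which I may assume is $>\lambda$), I would first fix a regular cardinal $\alpha$ with $\max(\mathrm{cf}(\delta),\lambda) < \alpha < \delta$; this is possible because $\delta$ is singular and $>\lambda$, and it forces $\mathrm{cf}(\delta) < \alpha$ as needed for the $<\alpha$-support requirement. As in Lemma~\ref{regularchar}(2), $\sup[\alpha^{+},\delta)_{\alpha\text{-cl}} = \delta$, so I can choose an increasing sequence $\langle \delta_i \mid i < \mathrm{cf}(\delta)\rangle \subseteq [\alpha^{+},\delta)_{\alpha\text{-cl}}$ cofinal in $\delta$ and define $p$ with $\mathrm{supp}(p) = \{\alpha\}$ and $p(\alpha)$ exactly as there. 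Since $\alpha > \lambda$ we get $\mathrm{supp}(p)\cap(\lambda+1)=\emptyset$, and since $\mathrm{dom}(p(\alpha))$ is cofinal in $\delta$ we get $p \notin \bigcup_{\eta<\delta}P(\mu,\eta)$, which gives the first requirement.

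For the second requirement, given $q \in \bigcup_{\eta<\delta}P(\mu,\eta)$, I would fix $\eta < \delta$ with $q \in P(\mu,\eta)$ and use cofinality to pick some $\delta_i \geq \eta$. The key observation is that the coordinate $\delta_i$ lies beyond the reach of $q$: whether or not $\alpha \in \mathrm{supp}(q)$, the domain of $q(\alpha)$ is contained in $[\alpha^{+},\eta)$ and hence misses $\delta_i$. I would then let $r$ have $\mathrm{supp}(r) = \{\alpha\}$ with $r(\alpha)$ a condition supported at $\delta_i$ only, choosing $r(\alpha)(\delta_i) \in {}^{<\alpha}\delta_{i}$ defined at $0$ with $r(\alpha)(\delta_i)(0) \neq p(\alpha)(\delta_i)(0)$. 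This makes $p(\alpha)(\delta_i)$ and $r(\alpha)(\delta_i)$ incompatible in ${}^{<\alpha}\delta_{i}$, hence $p \perp r$ in $P(\mu,\delta)$; at the same time $\mathrm{dom}(r(\alpha))=\{\delta_i\}$ is disjoint from $\mathrm{dom}(q(\alpha))$, so $r$ and $q$ are compatible. Because $\mathrm{supp}(r)=\{\alpha\}$ and $\mathrm{dom}(r(\alpha))=\{\delta_i\}$ all lie in $(\lambda,\delta)$, both $\mathrm{dom}(r)\cap(\lambda+1)=\emptyset$ and $r,\,r\cdot q \in \bigcup_{\eta'<\delta}P(\mu,\eta')$ follow, the meet $r\cdot q$ being the coordinatewise union, whose support and inner domains are bounded below $\max(\eta,\delta_i)^{+} < \delta$.

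The routine verifications — that $p$ and $r$ are genuine conditions, that incompatibility in a single coordinate propagates to incompatibility in the product, and that the union of two compatible conditions with bounded data again has bounded data — are immediate from the definition of $P(\mu,\delta)$ as the $<\mu$-support product of the Levy collapses $\mathrm{Coll}(\alpha,<\delta)$. The only real idea, and the step I expect to carry the weight, is the simultaneous arrangement of incompatibility of $r$ with $p$ together with compatibility of $r$ with $q$. This is forced to succeed precisely because $\mathrm{dom}(p(\alpha))$ is cofinal in $\delta$ while $q$, living in some $P(\mu,\eta)$, uses collapse coordinates only below $\eta$: a single coordinate $\delta_i \geq \eta$ therefore separates them. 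The $\lambda$-constraint is handled automatically, since $\alpha$ was chosen above $\lambda$ and every relevant coordinate exceeds $\alpha$.
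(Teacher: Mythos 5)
Your proposal is correct and takes essentially the same route as the paper: the paper's proof just asserts that the condition $p$ from the proof of Lemma~\ref{regularchar}(2) works (with the choice $\alpha > \lambda$ left implicit), and you rebuild that same $p$ and supply the omitted verification of the second bullet by separating $p$ from $q$ at a coordinate $\delta_i \geq \eta$ in $\mathrm{dom}(p(\alpha))$.
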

\begin{proof}
 The condition $p$ which was defined in the proof of Lemma~\ref{regularchar} works.
\end{proof}

The following property of Levy collapses is used in the proof of Claim \ref{centeredconclusion}.
\begin{lem}\label{levycentered2}
 For inaccessible $\lambda$ and regular $\kappa < \alpha < \lambda$, $\mathrm{Coll}(\kappa,<\lambda)$ forces $\mathrm{Coll}^{V}(\alpha,<\lambda)$ is not $\kappa$-centered.
\end{lem}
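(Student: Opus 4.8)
The plan is to argue entirely in $V[G]$, where $G$ is $\mathrm{Coll}(\kappa,<\lambda)$-generic, and to write $\mathbb{Q}=\mathrm{Coll}^{V}(\alpha,<\lambda)$. The first thing I would record is a reduction that is special to this poset. Each coordinate ${}^{<\alpha}\gamma$ is a tree, so two conditions of ${}^{<\alpha}\gamma$ are compatible iff comparable; and since the support of finitely many conditions is again of size $<\alpha$, any finitely many \emph{pairwise} compatible conditions of $\mathbb{Q}$ have a greatest lower bound, computed coordinatewise (the pointwise maximum). Hence for $\mathbb{Q}$ a set is centered iff it is linked, so $\kappa$-centered and $\kappa$-linked coincide, and it suffices to show that $\mathbb{Q}$ is not a union of $\kappa$ linked sets in $V[G]$. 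I emphasize that the cruder routes are unavailable here: over $V[G]$ one has $\lambda=\kappa^{+}$, and the usual arguments show $\mathbb{Q}$ remains $\kappa^{+}$-c.c.\ and collapses no cardinals, so a $\kappa^{+}$-sized antichain or a collapse of $\kappa^{+}$ (which would contradict $\kappa$-centered $\Rightarrow\kappa^{+}$-c.c.) cannot be the obstruction.

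Next I would recast $\kappa$-linkedness as a covering property. If $\langle L_{\eta}:\eta<\kappa\rangle$ were a linked cover, then at each coordinate $\nu$ the image of $L_{\eta}$ is a chain in the tree ${}^{<\alpha}\gamma_{\nu}$; letting $F_{\eta}(\nu)$ be the union of that chain (a branch through the tree), every condition $p$ of $\mathbb{Q}$ satisfies $p(\nu)\subseteq F_{\eta}(\nu)$ for all $\nu\in\mathrm{supp}(p)$, for some single $\eta$. Thus a $\kappa$-linked decomposition is the same as a family $\langle F_{\eta}:\eta<\kappa\rangle$ of functions in $V[G]$, defined on the $\lambda$-many coordinates, such that every ground model condition is coordinatewise an initial segment of some $F_{\eta}$. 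The goal is to refute the existence of such a family.

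To get the contradiction I would return to $V$ and force: with $\mathbb{P}=\mathrm{Coll}(\kappa,<\lambda)$ and names $\dot F_{\eta}$, I would try to build a \emph{single ground model} condition $p\in\mathbb{Q}$ together with a condition of $\mathbb{P}$ forcing $p\not\subseteq\dot F_{\eta}$ for every $\eta$. Because $\mathbb{Q}$ is $<\alpha$-closed in $V$ and $\kappa<\alpha$, the condition $p$ can be assembled by a fusion of length $\kappa$: at step $\eta$ pick a fresh coordinate $\nu_{\eta}$ (available, since there are $\lambda$ coordinates), shrink the side condition in $\mathbb{P}$ so as to decide the value $\dot F_{\eta}(\nu_{\eta})=v$, and then put $p(\nu_{\eta})$ equal to some ground model value different from $v$. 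The coordinatewise union is a legitimate condition of $\mathbb{Q}$, since its support has size $\kappa<\alpha$, it lies in $V$ because all its values are chosen from ground model value sets, and by construction it disagrees with each $F_{\eta}$ at $\nu_{\eta}$, hence is covered by none of them.

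The main obstacle is exactly the side condition. Our $\mathrm{Coll}(\kappa,<\lambda)$ is only $<\kappa$-directed closed, so the decreasing $\kappa$-sequence of side conditions produced by the fusion need not have a lower bound, and without a single condition of $\mathbb{P}$ lying below all of them one cannot guarantee that the values $v$ used to diagonalize are the ones realized in $G$. (The difficulty is genuine rather than cosmetic: on any fixed ground model domain of size $\kappa$ the ground model patterns number only $\kappa$ in $V[G]$, so the tension between keeping $p$ in $V$ and escaping $V[G]$-branches cannot be resolved by counting.) Overcoming this is the heart of the argument, and I would expect to handle it not by a single master condition but by reading the finitely many relevant values of each $\dot F_{\eta}$ off from bounded, $<\kappa$-sized initial pieces of the generic — for which $\mathbb{P}$ \emph{is} closed — choosing the coordinates $\nu_{\eta}$ adaptively so that each avoidance depends only on such a bounded piece; this keeps the diagonal condition in the ground model while letting the information needed to defeat the cover come from $G$.
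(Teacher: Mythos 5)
Your setup is sound: the reduction of centeredness to linkedness for $\mathrm{Coll}^{V}(\alpha,<\lambda)$ is correct, the reformulation of a $\kappa$-linked cover as a family of coordinatewise branches $\langle F_\eta \mid \eta<\kappa\rangle$ is correct, and you are right that in $V[G]$ itself neither an antichain of size $\kappa^{+}$ nor a cardinal collapse is available as the obstruction. But the argument then stops exactly where the proof has to begin. The diagonalization you describe requires a single condition $p\in\mathrm{Coll}^{V}(\alpha,<\lambda)$, fixed in the ground model, together with a guarantee valid of the actual generic $G$ that $p(\nu_\eta)\not\subseteq F_\eta(\nu_\eta)$ for every $\eta<\kappa$. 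Since $\mathrm{Coll}(\kappa,<\lambda)$ is only $\kappa$-directed closed, no single condition can decide the $\kappa$-many values $\dot F_\eta(\nu_\eta)$, and the proposed repair --- ``reading the relevant values off bounded pieces of the generic and choosing the $\nu_\eta$ adaptively'' --- does not cohere: anything read off the generic is not available when $p$ is chosen in $V$, and $\kappa$-many separate density arguments (one per $\eta$, each over a bounded subcollapse) do not combine into a statement true of one fixed $p$ in the one extension $V[G]$ unless some condition forces all of them simultaneously, which is the closure problem again. Indeed the fixed-coordinate version of your scheme is genuinely refutable: by the usual density argument $\mathrm{Coll}(\kappa,<\lambda)$ adds a function on $\kappa$ that agrees unboundedly often with every ground-model function on any prescribed sequence of coordinates, so an adversary can defeat any diagonal condition chosen in advance. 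What is missing is an idea, not a technicality.

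The paper's proof takes a different route and, interestingly, rehabilitates the chain-condition strategy you discarded --- just not in $V[G]$. Fix a club $D\subseteq\lambda$ such that for $\delta\in D$, all antichains deciding ``$q\in\dot F_\xi$'' for $q\in\mathrm{Coll}(\alpha,<\delta)$, $\xi<\kappa$, lie in $\mathrm{Coll}(\kappa,<\delta)$, and pick $\delta\in D$ with $\kappa\le\mathrm{cf}(\delta)<\alpha$. Factor $G=G_0\times G_1$ with $G_0$ generic for $\mathrm{Coll}(\kappa,<\delta)$. In $V[G_0]$ the traces $F_\xi\cap\bigcup_{\zeta<\delta}\mathrm{Coll}^{V}(\alpha,<\zeta)$ are already computable, and since membership of $q\in\mathrm{Coll}^{V}(\alpha,<\delta)$ in the centering is detected by its bounded restrictions $q\upharpoonright(\mathrm{supp}(q)\cap\alpha)$, these traces regenerate a $\kappa$-centering of all of $\mathrm{Coll}^{V}(\alpha,<\delta)$ \emph{in $V[G_0]$}. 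But $|\mathrm{Coll}(\kappa,<\delta)|=\delta$, so $(\delta^{+})^{V}$ survives in $V[G_0]$ and is $\ge\kappa^{+}$ there, while $\mathrm{Coll}^{V}(\alpha,<\delta)$ has an antichain of size $\delta^{\mathrm{cf}(\delta)}\ge\delta^{+}$ built from an unbounded sequence in $[\alpha^{+},\delta)$ (here $\mathrm{cf}(\delta)<\alpha$ is what makes the support legal). This contradicts $\kappa$-centered $\Rightarrow\kappa^{+}$-c.c. In short: instead of diagonalizing against the cover, reflect it to a singular $\delta$ of the right cofinality and pass to the intermediate model where the large antichain has not yet been collapsed. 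I would encourage you to rework your attempt along these lines rather than trying to salvage the fusion.
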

\begin{proof}
 We show by contradiction. We may assume that $\mathrm{Coll}(\kappa,\lambda)$ forces that $\mathrm{Coll}^{V}(\alpha,<\lambda)$ is $\kappa$-centered. Let $\langle \dot{F}_{\xi} \mid \xi < \kappa \rangle$ be a $\mathrm{Coll}(\kappa,<\lambda)$-name for a centering. We may assume that it is forced that each $\dot{F}_{\xi}$ is a filter because $\prod X \in \mathrm{Coll}(\kappa,<\lambda)$ for every $X \subseteq \mathrm{Coll}(\kappa,<\lambda)$ with $X$ has a lower bound.

For each $\xi < \kappa$ and $q \in \mathrm{Coll}(\alpha,<\lambda)$, let $\rho(q,\xi)$ be the least cardinal $\delta$ such that there is a maximal anti-chain $\mathcal{A} \subseteq \mathrm{Coll}(\kappa,<\delta)$ with $\forall p \in \mathcal{A}(p$ decides $q \in \dot{F}_{\xi})$. Let $D \subseteq \lambda$ be a club generated by a mapping $\delta \mapsto \sup\{\rho(q,\xi) \mid \xi < \kappa \land q \in {\mathrm{Coll}(\alpha,<\delta)}\}\cup\{\delta^{<\delta}\}$. 

Fix a $\delta \in D \cap E^{\lambda}_{\geq \kappa} \cap E^{\lambda}_{<\alpha}$. The following holds now.
 \begin{itemize}
  \item $|\mathrm{Coll}(\kappa,<\delta)| = \delta$, in particular, $\mathrm{Coll}(\kappa,<\delta) \force (\delta^{+})^V \geq \kappa^{+}$. 
  \item $\mathrm{Coll}(\alpha,<\delta)$ has an anti-chain of size $\delta^{\mathrm{cf}(\delta)} \geq \delta^{+}$. 
 \end{itemize}
 The first item follows by the standard cardinal arithmetic. Let us define an anti-chain for $\mathrm{Coll}(\alpha,<\delta)$ of size $\delta^{\mathrm{cf}(\delta)}$. Note that we can choose a sequence $\langle\delta_{i} \mid i < \mathrm{cf}(\delta)\rangle \subseteq [\alpha^{+},\delta)_{\alpha\text{-cl}}$ which converges to $\delta$. For each $i < \mathrm{cf}(\delta)$, ${^{<\alpha}\alpha_{i}}$ has an anti-chain $\{p^{i}_{\xi} \mid \xi < \alpha_{i}\}$ of size $\alpha_i$. For each $f \in \prod_{i < \mathrm{cf}(\delta)}\alpha_{i}$, define $p_f \in \mathrm{Coll}(\alpha,<\lambda)$ as follows:
\begin{itemize}
 \item $\mathrm{supp}(p_f) = \{\alpha_i \mid i < \mathrm{cf}(\delta)\}$. 
 \item $p_f(\alpha_i) = p_{f(i)}^i$. 
\end{itemize}
It is easy to see that $\{p_f \mid f \in  \prod_{i < \mathrm{cf}(\delta)}\alpha_{i}\}$ witnesses.

Let $G$ be an arbitrary $(V,\mathrm{Coll}(\kappa,<\lambda))$-generic. $G$ can be factored as $G = G_0 \times G_1$ where $G_0$ is a $(V,\mathrm{Coll}(\kappa,<\delta))$-generic. Let us discuss in $V[G_0]$. Letting $Q = (\bigcup_{\zeta < \delta}\mathrm{Coll}(\alpha,<\zeta))^{V}$. Let $F_\xi = \dot{F}_{\xi}^{G}$ in $V[G]$, note that $F_\xi \cap Q \in V[G_0]$ by $\delta \in D$. In particular, $Q$ has a centering $\langle {F}_\xi \cap Q \mid \xi < \kappa \rangle$ in $V[G_0]$. Define $H_\xi = \{q \in \mathrm{Coll}^V(\alpha,<\delta) \mid \forall \alpha \in \mathrm{supp}(q)(q \upharpoonright (\mathrm{supp}(q) \cap \alpha) \in F_{\xi} \cap Q)\}$. We claim that $\langle H_\xi \mid \xi < \kappa \rangle$ is a centering for $\mathrm{Coll}^V(\alpha,<\delta)$. It is easy to see that each $H_\xi$ is a filter. For each $q \in \mathrm{Coll}^V(\alpha,<\delta)$, in $V[G]$, there is a $\xi$ such that $q \in F_\xi$. For every $\alpha < \mathrm{supp}(q)$, $q \upharpoonright (\mathrm{supp}(q) \cap \alpha) \in Q \cap F_{\xi}$. This has held in $V[G_0]$ yet, and thus, $q \in H_{\xi}$ in $V[G_0]$. 

 We showed that $\mathrm{Coll}^V(\alpha,<\delta)$ is $\kappa$-centered, which in turn implies the $\kappa^{+}$-c.c. But $\mathrm{Coll}^V(\alpha,<\delta)$ has a maximal anti-chain of size $(\delta^{\mathrm{cf}(\delta)})^V \geq \kappa^{+}$ as we have seen. This is a contradiction.
\end{proof}

\begin{rema}\label{remarklevy}
 For inaccessible $\lambda$ and regular $\alpha \leq \kappa$, $\mathrm{Coll}(\kappa,<\lambda)$ forces $\mathrm{Coll}^{V}(\alpha,<\lambda)$ is $\kappa$-centered.
\end{rema}
\begin{proof}
 We discuss in the extension by $\mathrm{Coll}(\kappa,<\lambda)$. For all $\gamma \in [\alpha^{+},\lambda)_{\alpha\text{-cl}}$, because of $|({^{<\alpha}\gamma})^V| \leq \kappa$, $({^{<\alpha}\gamma})^{V}$ is $\kappa$-centered. By Lemma 4 in~\cite{MR925267}, it follows that $\prod_{\gamma \in [\alpha^{+},\lambda)}^{<\alpha}({^{<\alpha}\gamma})^{V}$ is $\kappa$-centered. In particular, $\mathrm{Coll}^{V}(\alpha,<j(\kappa))$ is $\kappa$-centered. 
\end{proof}
By Lemma \ref{levycentered2} and Remark \ref{remarklevy}, we have there is no complete embedding from $\mathrm{Coll}(\alpha,<\lambda)$ to $\mathrm{Coll}(\kappa,<\lambda)$ for all $\alpha \in [\kappa,\lambda)\cap \mathrm{Reg}$. On the other hand, the diagonal product of Levy collapses have a nice absorption properties as follows. Lemma \ref{mainprojection} plays an important role in the proof of Theorem \ref{maintheorem}.
\begin{lem}\label{mainprojection}
 Suppose that $\mu < \kappa \leq \lambda < \nu$ are regular and $\kappa$ and $\nu$ are inaccessible. Then there is a projection $\pi:P(\mu,\nu) \to P(\mu,\kappa) \ast \dot{\mathrm{Coll}}(\lambda,<\nu)$ which is continuous. In addition, $\pi(p) = \langle p,\emptyset\rangle$ for all $p \in P(\mu,\kappa)$.
\end{lem}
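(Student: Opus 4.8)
The plan is to realize $P(\mu,\nu)$ as a product $P(\mu,\kappa)\times R$ and then absorb the second coordinate onto the desired Levy collapse. First I would use the inaccessibility of $\kappa$ to cut cleanly at $\kappa$: every $\gamma<\kappa$ is $\alpha$-closed for each regular $\alpha<\kappa$, so the set of pairs $(\alpha,\gamma)$ defining $P(\mu,\nu)$ splits into the block $\{(\alpha,\gamma):\alpha,\gamma<\kappa\}$, which reassembles exactly to $P(\mu,\kappa)$, and its complement $R\in V$. Since both the outer $<\mu$-support and the inner $<\alpha$-supports factor over this partition, one gets a canonical isomorphism $P(\mu,\nu)\cong P(\mu,\kappa)\times R$. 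The projection onto the first factor is the restriction map $p\mapsto p\upharpoonright\kappa$; it is a projection, it fixes each $p\in P(\mu,\kappa)$ (so that $\pi(p)=\langle p,\emptyset\rangle$ as required), and it is continuous because infima in a product are computed coordinatewise and restriction commutes with them. By the correspondence between products and two-step iterations, it then suffices to produce, in the generic extension $V[G]$ by $P(\mu,\kappa)$, a continuous projection $\rho\colon R\to\mathrm{Coll}(\lambda,<\nu)^{V[G]}$; the map $\pi=\mathrm{id}\ast\dot\rho$ is then the one we want.

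Next I would construct $\rho$ inside $V[G]$, where $\kappa=\mu^{+}$. Inside $R$ sit the level-$\lambda$ collapse $\mathrm{Coll}(\lambda,<\nu)^{V}$, the lower collapses $\mathrm{Coll}(\alpha,<\nu)$ for $\kappa\le\alpha<\lambda$, and the tails (at coordinates $\geq\kappa$) of the collapses indexed by $\alpha<\kappa$. The map $\rho$ reads off the level-$\lambda$ data of a condition and interprets its $<\lambda$-sequences in $V[G]$. The content of the construction is the \emph{absorption claim}: over $V[G]$ the poset $R$ projects onto $\mathrm{Coll}(\lambda,<\nu)^{V[G]}$. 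The subtlety is that $\mathrm{Coll}(\lambda,<\nu)^{V[G]}$ uses $<\lambda$-sequences that need not lie in $V$; these extra short sequences are exactly those coded by the $G$-collapses below $\kappa$ and by the diagonal factors $\mathrm{Coll}(\alpha,<\nu)$ with $\kappa\le\alpha\le\lambda$, which collapse the intervals in $[\kappa,\nu)$ and realise the $V[G]$-collapse as a quotient of $R$. This is precisely the absorption property of the diagonal product exploited by Shioya~\cite{shioya}. The lifting property of $\rho$ I would verify by a density argument: given a name for a condition of $\mathrm{Coll}(\lambda,<\nu)^{V[G]}$ below $\rho(r)$, use the mutual genericity of the factors of $R$ and the $<\lambda$-closure of the collapses to find $r'\le r$ in $R$ with $\rho(r')$ below it.

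Finally I would confirm continuity of $\rho$, hence of $\pi$. Since $R$ is a product and $\rho$ is meant to act coordinatewise -- by restriction on the coordinates away from level $\lambda$ and by the canonical collapse map on the level-$\lambda$ coordinate -- it should preserve all existing infima, so it is continuous in the sense of this paper; composing with the continuous first-coordinate projection keeps $\pi$ continuous, and the normalisation $\pi(p)=\langle p,\emptyset\rangle$ for $p\in P(\mu,\kappa)$ is immediate since such $p$ lives entirely in the $<\kappa$ block and has trivial $R$-part. I expect the main obstacle to be the absorption claim together with its compatibility with continuity: one must show not merely that $R$ maps onto $\mathrm{Coll}(\lambda,<\nu)^{V[G]}$, but that the map can be arranged to preserve arbitrary infima, which forces a coordinatewise definition of $\rho$ and requires precise bookkeeping of which $V$-coordinates of $R$ supply the $V[G]$-computed $<\lambda$-sequences. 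Carrying out the lifting property at this level of explicitness, while keeping the map continuous, is the technical heart of the proof.
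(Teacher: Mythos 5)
Your first step---splitting $P(\mu,\nu)$ along the block $\{(\alpha,\gamma):\alpha,\gamma<\kappa\}$ into $P(\mu,\kappa)\times R$ and projecting by restriction---is fine and is essentially what the paper does. The genuine gap is in the second step, the passage from a \emph{product} with $P(\mu,\kappa)$ to the \emph{iteration} $P(\mu,\kappa)\ast\dot{\mathrm{Coll}}(\lambda,<\nu)$. You locate the difficulty correctly (the collapse in $V[G]$ is built from $<\lambda$-sequences not in $V$), but the mechanism you propose for resolving it is wrong: the new $<\lambda$-sequences are not ``coded by'' the remaining factors of $R$ (the tails of the lower collapses and the levels $\mathrm{Coll}(\alpha,<\nu)$ for $\kappa\le\alpha\le\lambda$). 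Those factors are ground-model posets and are simply discarded by the projection; they contribute nothing to computing $\mathrm{Coll}(\lambda,<\nu)^{V[G]}$. The new sequences are supplied by the generic $G$ itself, and the correct tool is Laver's term-forcing lemma together with a counting argument: because $P(\mu,\kappa)$ has the $\kappa$-c.c.\ and size $\le\lambda$, and each index cardinal $\gamma$ is $\lambda$-closed, every $P$-name for an element of ${}^{<\lambda}\gamma$ is equivalent to one of only $\gamma$ many canonical names, so the single ground-model factor $\mathrm{Coll}(\lambda,<\nu)^{V}$ embeds densely into the term forcing $T(P(\mu,\kappa),\dot{\mathrm{Coll}}(\lambda,<\nu))$, and $P\times T(P,\dot Q)\to P\ast\dot Q$ is a projection. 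Without this counting step there is no reason the map you describe should have the lifting property, and the ``mutual genericity of the factors of $R$'' cannot substitute for it.

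This same omission undermines the continuity claim, which you defer as ``the technical heart.'' In the paper continuity is not an abstract consequence of a coordinatewise definition: one writes $\pi_0(p,q)=\langle p,\dot q\rangle$ where $\dot q$ is the canonical name obtained by replacing each entry $q(\xi,\zeta)$ with the canonical name $\dot\tau^{\zeta}_{q(\xi,\zeta)}$, and then checks directly that the name of a union of compatible conditions is forced equal to the union of their names, so that $\pi_0(\prod Z)=\prod\pi_0``Z$. That computation is only available once the term-forcing description of the projection is in place. So the proposal identifies the right target but is missing the one idea (ground-model collapse $=$ term forcing, via the chain-condition counting) that actually makes both the projection and its continuity go through.
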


To prove this, we need
\begin{lem}\label{levyprojection}
 Suppose that $P$ has the $\kappa$-c.c. and $|P| \leq \kappa$. For inaccessible $\lambda > \kappa$, there is a projection $P \times \mathrm{Coll}(\kappa,<\lambda) \to P \ast \dot{\mathrm{Coll}}(\kappa,<\lambda)$ which is identity on the first coordinate and continuous.
\end{lem}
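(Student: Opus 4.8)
The plan is to factor the problem through the $P$-extension and to reduce the second coordinate to a single-cardinal absorption statement. Fix a $(V,P)$-generic $G$ and write $C^{V}=\mathrm{Coll}(\kappa,<\lambda)$ and $C^{V[G]}=\mathrm{Coll}^{V[G]}(\kappa,<\lambda)$. To build a projection $\pi\colon P\times C^{V}\to P\ast\dot{\mathrm{Coll}}(\kappa,<\lambda)$ that is the identity on the $P$-coordinate it suffices to produce a $P$-name $\dot\rho$ forced to be a projection from $\check{C^{V}}$ onto $\dot{\mathrm{Coll}}(\kappa,<\lambda)$, and then set $\pi(p,q)=\langle p,\dot\rho(\check q)\rangle$; indeed, if $\langle p',\dot t\rangle\le\pi(p,q)$ then $p'$ forces the existence of a witness in $\check{C^{V}}$ below $\check q$, and since $C^{V}\in V$ we may refine $p'$ to some $p''$ deciding that witness to be $\check{q'}$ for a fixed $q'\le q$, whence $\pi(p'',q')\le\langle p',\dot t\rangle$. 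Note $\pi(p,1)=\langle p,1\rangle$, so $\pi$ is the identity on $P$ as required. So I would work in $V[G]$ and construct a continuous projection $\rho\colon C^{V}\to C^{V[G]}$.

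Before that I would check that the two collapses are products over a common index set. Since $\lambda$ is inaccessible and $|P|\le\kappa$, $\lambda$ stays inaccessible in $V[G]$, and for $\gamma\in[\kappa^{+},\lambda)$ a nice-name count using the $\kappa$-c.c.\ and $|P|\le\kappa$ shows that $\gamma$ is $\kappa$-closed in $V[G]$ if and only if it is $\kappa$-closed in $V$; hence the index set $I=[\kappa^{+},\lambda)_{\kappa\text{-cl}}$ is computed the same way in $V$ and $V[G]$. Thus $C^{V}=\prod^{<\kappa}_{\gamma\in I}({}^{<\kappa}\gamma)^{V}$ and $C^{V[G]}=\prod^{<\kappa}_{\gamma\in I}({}^{<\kappa}\gamma)^{V[G]}$, and since a $<\kappa$-support product of projections is again a projection, it is enough to produce, for each $\gamma\in I$, a projection $\rho_{\gamma}\colon({}^{<\kappa}\gamma)^{V}\to({}^{<\kappa}\gamma)^{V[G]}$ in $V[G]$ and let $\rho$ act coordinatewise.

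The heart is this coordinate projection, which is an absorption: over $V[G]$ the ground-model collapse of $\gamma$ must add a generic for the genuine collapse $({}^{<\kappa}\gamma)^{V[G]}$. The naive inclusion cannot serve here, because $({}^{<\kappa}\gamma)^{V}$ is \emph{not} a complete suborder of $({}^{<\kappa}\gamma)^{V[G]}$ in $V[G]$: a new function of some length $\delta<\kappa$ is incompatible with every ground-model function of length $\delta$, so ground-model maximal antichains are no longer maximal, and a $({}^{<\kappa}\gamma)^{V}$-generic branch has all its proper initial segments in $V$ and therefore misses the dense set of new conditions. Instead I would use that $|P|\le\kappa<\gamma$ and $\gamma^{<\kappa}=\gamma$ to fix, in $V$, a coding of all nice $P$-names for elements of $({}^{<\kappa}\gamma)^{V[G]}$ by a set of size $\gamma$, and then use the generic surjection $\kappa\to\gamma$ supplied by $({}^{<\kappa}\gamma)^{V}$ to decode these names into a generic for $({}^{<\kappa}\gamma)^{V[G]}$; reading this decoding off a condition defines $\rho_{\gamma}$. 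The main obstacle is exactly the verification that the decoded object is $({}^{<\kappa}\gamma)^{V[G]}$-generic over $V[G]$, i.e.\ the lifting property of $\rho_{\gamma}$: this is a density argument in which the $\kappa$-c.c.\ of $P$ (names are decided on antichains of size $<\kappa$) and the mutual genericity of the collapse generic with $G$ are used to meet an arbitrary $V[G]$-dense subset of the genuine collapse. The hypothesis $|P|\le\kappa$ is what confines the coding inside $\gamma$ and is essential here.

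Finally I would verify continuity, which is why the projection is built explicitly rather than cited abstractly. Passing to the completions and using the product structure, it reduces to $\rho_{\gamma}(\prod Z)=\prod\rho_{\gamma}``Z$ whenever $\prod Z\ne 0$. In the tree ${}^{<\kappa}\gamma$ a family $Z$ with $\prod Z\ne 0$ must be a chain admitting a common ground-model lower bound, so its union is a genuine condition $t\in({}^{<\kappa}\gamma)^{V}$ of length $<\kappa$ with $\prod Z=t$ (a strictly increasing chain of length $\kappa$, or one whose union leaves $V$, has product $0$); on such genuine conditions the explicit decoding commutes with meets, and arbitrary nonzero products reduce to this case. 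Since this holds for families of every size, $\rho_{\gamma}$ is continuous, hence so are $\rho$ and $\pi$, giving the desired continuous projection that is the identity on $P$.
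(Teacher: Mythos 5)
Your high-level reduction is sensible and your diagnosis that the naive inclusion $({}^{<\kappa}\gamma)^{V}\subseteq({}^{<\kappa}\gamma)^{V[G]}$ cannot work is correct, but the two steps you defer are exactly where the lemma lives, and as written they have genuine gaps. The most serious one is the coordinate map $\rho_\gamma$ together with its continuity. You never actually define the ``decoding'': a map that uses the generic surjection $\kappa\to\gamma$ to decode codes of $P$-names into a $({}^{<\kappa}\gamma)^{V[G]}$-generic is the classical absorption argument, and such maps are in general only projections (or isomorphisms) between \emph{completions}; they do unspecified work at limit lengths and when decoded values conflict, and there is no reason they should satisfy $\rho_\gamma(\prod Z)=\prod\rho_\gamma``Z$. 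Since continuity is the entire point of the lemma (it is what makes Lemmas \ref{ccquotient} and \ref{layeredquotient} applicable later), the sentence ``on such genuine conditions the explicit decoding commutes with meets'' is precisely the claim that needs proof, and it cannot be assessed until the decoding is written down. The decoding machinery is also overkill: the map that works is far more rigid, namely $q\mapsto\dot q$ where $\dot q$ has the \emph{same domain} as $q$ and value the canonical name $\dot\tau^{\zeta}_{q(\xi,\zeta)}$ at each point; domain preservation is what makes continuity a one-line computation. This is the paper's route: identify $\mathrm{Coll}(\kappa,<\lambda)$ with a dense subset of the term forcing $T(P,\dot{\mathrm{Coll}}(\kappa,<\lambda))$ by counting canonical names for ordinals (Lemma \ref{termproduct}, which is where $|P|\le\kappa$, the $\kappa$-c.c.\ and $\gamma^{<\kappa}=\gamma$ enter), and then apply Laver's Lemma \ref{termforcing}, all computed in $V$.

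The second gap is the product step. You invoke ``a $<\kappa$-support product of projections is again a projection,'' but the domain of your $\rho$ is $C^{V}$, the product \emph{as computed in $V$}, viewed inside $V[G]$ where $P$ may have added new $<\kappa$-sized sets. To verify the projection property you must, given $\vec q\in C^{V}$ and $\vec s\le\rho(\vec q)$ with $\vec s\in V[G]$, produce a witness $\vec q'$ lying in $V$: both its support (which must cover $\mathrm{supp}(\vec s)$, a set of $V[G]$) and the assignment $\gamma\mapsto q'_{\gamma}$ (which encodes a choice of name for each coordinate $s_{\gamma}$) have to be ground-model objects. Choosing $q'_{\gamma}$ coordinatewise in $V[G]$ does not produce such a $\vec q'$; one needs a $\kappa$-c.c.\ covering of the support and a single ground-model name for $\vec s$ from which the coordinate names are read off uniformly. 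This is fixable, but it is not an instance of the abstract fact you cite, and it is another place where the paper's decision to stay in $V$ and only cross into the extension once, via Lemma \ref{termforcing}, pays off.
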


In the proof of Lemma \ref{levyprojection}, we use knowledge of the term forcing. For a poset $P$ and a $P$-name $\dot{Q}$ for a poset, the term forcing $T(P,\dot{Q})$ is a complete set of representatives from $\{\dot{q} \mid \force \dot{q} \in \dot{Q}\}$ with respect to the canonical equivalence relation. $T(P,\dot{Q})$ is ordered by $\dot{q} \leq \dot{q}'\leftrightarrow \force \dot{q} \leq \dot{q}'$. The following lemma is due to Laver. 
\begin{lem}[Laver]\label{termforcing}
 ${\mathrm{id}}:P \times T(P,\dot{Q}) \to P \ast \dot{Q}$ is a projection.
\end{lem}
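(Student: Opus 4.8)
The plan is to check the three clauses in the definition of a projection directly, the only substantive one being the lifting clause. Writing the domain as $R = P \times T(P,\dot{Q})$ and the codomain as $S = P \ast \dot{Q}$, recall that a pair $\langle p,\dot{q}\rangle$ with $1_P \force \dot{q} \in \dot{Q}$ is ordered in $R$ by $\langle p_0,\dot{q}_0\rangle \leq \langle p_1,\dot{q}_1\rangle$ iff $p_0 \leq_P p_1$ and $1_P \force \dot{q}_0 \leq \dot{q}_1$, while the same pair lives in $S$ (since $1_P \force \dot{q} \in \dot{Q}$ gives $p \force \dot{q} \in \dot{Q}$) and is there ordered by $p_0 \leq_P p_1$ and $p_0 \force \dot{q}_0 \leq \dot{q}_1$. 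Because forcing with $1_P$ implies forcing with any $p_0$, the product order refines the iteration order on these pairs; hence $\mathrm{id}$ is well defined, order-preserving, and maps the top $\langle 1_P,\dot{1}_{\dot{Q}}\rangle$ of $R$ to the top of $S$.

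For the lifting clause I would take an arbitrary $\langle p,\dot{q}\rangle \in R$ and $\langle p',\dot{q}'\rangle \leq_S \langle p,\dot{q}\rangle$, so that $p' \leq_P p$ and $p' \force \dot{q}' \leq \dot{q}$, and produce a condition in $R$ below $\langle p,\dot{q}\rangle$ whose image lies below $\langle p',\dot{q}'\rangle$ in $S$. The key idea is to keep the first coordinate $p'$ and to \emph{mix} the two term names along $p'$: let $\dot{q}''$ be the name forced by $p'$ to equal $\dot{q}'$ and forced by every condition incompatible with $p'$ to equal $\dot{q}$ (in the Boolean setting, $\|\dot{q}'' = \dot{q}'\| \geq p'$ and $\|\dot{q}'' = \dot{q}\| \geq \neg p'$).

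With this choice everything falls out. Off $p'$ we have $\dot{q}'' = \dot{q}$, and on $p'$ we have $\dot{q}'' = \dot{q}' \leq \dot{q}$ by hypothesis, so $1_P \force \dot{q}'' \leq \dot{q}$ and, combining the two cases, $1_P \force \dot{q}'' \in \dot{Q}$; hence $\dot{q}'' \in T(P,\dot{Q})$ and $\langle p',\dot{q}''\rangle \leq_R \langle p,\dot{q}\rangle$. On the other hand $p' \force \dot{q}'' = \dot{q}' \leq \dot{q}'$, so $\mathrm{id}(\langle p',\dot{q}''\rangle) = \langle p',\dot{q}''\rangle \leq_S \langle p',\dot{q}'\rangle$, as required. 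I do not expect a genuine obstacle: the lemma is pure name-combinatorics with no chain-condition or large-cardinal input, and the only point demanding care is the mixing step, namely checking that the case-defined $\dot{q}''$ is (equivalent to) a legitimate element of the term forcing, i.e. that $1_P$ forces it into $\dot{Q}$ — which is exactly what mixing $\dot{q}'$ and $\dot{q}$ guarantees.
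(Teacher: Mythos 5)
The paper states this lemma without proof, citing it as Laver's; there is nothing in the source to compare your argument against. Your proof is correct and is the standard one: the only nontrivial clause is the lifting property, and your mixing of $\dot{q}'$ with $\dot{q}$ along $p'$ (together with the observation that the mixed name is forced by $1_P$ into $\dot{Q}$, hence is equivalent to a representative in $T(P,\dot{Q})$) handles it exactly as intended.
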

 The proof of the following lemma is based on Shioya~\cite{MR4159767}. 
\begin{lem}\label{termproduct}
 Suppose $P$ has the $\kappa$-c.c. and $|P| \leq \kappa$. Then the following holds:
\begin{enumerate}
 \item If $\gamma$ is $\kappa$-closed, then there is a dense embedding from ${^{<\kappa}\gamma}$ onto $T(P,\dot{{^{<\kappa}\gamma}})$.
 \item If $\langle\dot{Q}_{\gamma} \mid \gamma \in I\rangle$ is a sequence of $P$-names of a poset. Then there is a dense embedding from $\prod_{\gamma \in I}^{<\kappa} T(P,\dot{Q}_{\gamma})$ onto $T(P,\prod_{\gamma \in I}^{<\kappa}\dot{Q}_{\gamma})$
\end{enumerate}
\end{lem}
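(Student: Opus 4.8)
The plan is to exhibit in each case an explicit map and verify that it is a dense embedding, the whole point being that the hypotheses $|P|\le\kappa$ and the $\kappa$-c.c. let us replace the relevant $P$-names---for the length of a condition, for its values, and for its support---by honest ground-model objects. Throughout I use that $\kappa$ is regular (so that a name for an ordinal $<\kappa$, being decided by an antichain of size $<\kappa$, is bounded below $\kappa$).

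For (1) I would first record that below every term the domain may be taken to be a fixed ground-model ordinal. Given $\dot{s}$ with $\force \dot{s}\in {^{<\kappa}\gamma}$, the name $\mathrm{dom}(\dot{s})$ for an ordinal $<\kappa$ is bounded by some fixed $\beta<\kappa$ as above; padding $\dot{s}$ by $0$ on $[\mathrm{dom}(\dot{s}),\beta)$ produces a term $\dot{t}$ with $\force \dot{t}\supseteq \dot{s}$, i.e. $\dot{t}\le \dot{s}$ in $T(P,\dot{{^{<\kappa}\gamma}})$, and with $\force \mathrm{dom}(\dot{t})=\beta$. Hence the fixed-domain terms are dense. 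Next I would replace each coordinate by a nice name: let $N$ be a complete set of representatives, modulo forced equality, of nice $P$-names for ordinals $<\gamma$. Since $|P|\le\kappa$ there are at most $\kappa^{<\kappa}$ antichains, each of size $<\kappa$, and $\gamma^{<\kappa}=\gamma$ bounds the number of labelings of each by ordinals $<\gamma$, so $|N|\le \kappa^{<\kappa}\cdot\gamma=\gamma$ (using $\kappa\le\gamma$, whence $\kappa^{<\kappa}\le\gamma^{<\kappa}=\gamma$); the names $\check\eta$ for $\eta<\gamma$ give $|N|=\gamma$. A fixed-domain term is equivalent to one of the form $\xi\mapsto$ (the evaluation of) $s(\xi)$ for some $s\in {^{<\kappa}N}$, and $\dot{t}_s\le \dot{t}_{s'}$ iff $s\supseteq s'$, while $\dot{t}_s\perp \dot{t}_{s'}$ exactly when $s,s'$ disagree on a common coordinate, because distinct members of $N$ are not forced equal. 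Thus $s\mapsto \dot{t}_s$ identifies ${^{<\kappa}N}$ with a dense subposet of $T(P,\dot{{^{<\kappa}\gamma}})$, and any bijection between $N$ and $\gamma$ converts this into the desired dense embedding from ${^{<\kappa}\gamma}$.

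For (2) I would define, for $\bar{s}=\langle \dot{s}_\gamma\mid \gamma\in a\rangle$ in $\prod_{\gamma\in I}^{<\kappa}T(P,\dot{Q}_\gamma)$ with $a=\mathrm{supp}(\bar{s})\in [I]^{<\kappa}$, the image $\Phi(\bar{s})$ to be a name for the function on $\check{a}$ sending $\gamma$ to $\dot{s}_\gamma$; since $a$ is a ground-model set of size $<\kappa$, this lies in $T(P,\prod_{\gamma\in I}^{<\kappa}\dot{Q}_\gamma)$. Order-preservation and preservation of incompatibility are immediate from the coordinatewise description of the product orders and of $T$. The content is density: given a term $\dot{s}$ with $\force \dot{s}\in \prod_{\gamma\in I}^{<\kappa}\dot{Q}_\gamma$, its support is a name for an element of $[I]^{<\kappa}$, and by the $\kappa$-c.c. (enumerate it by a name $\dot{f}\colon\beta\to I$ with $\beta<\kappa$ fixed as above, and bound each $\dot{f}(\xi)$ by its $<\kappa$ many possible values) there is a single $b\in [I]^{<\kappa}\cap V$ with $\force \mathrm{supp}(\dot{s})\subseteq \check{b}$. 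I would then take $\bar{t}$ with support $b$ and with $\bar{t}(\gamma)$ the name ``$\dot{s}(\gamma)$ if $\gamma\in\mathrm{supp}(\dot{s})$, else $1_{\dot{Q}_\gamma}$'', so that each $\bar{t}(\gamma)\in T(P,\dot{Q}_\gamma)$ and $\force \Phi(\bar{t})\supseteq \dot{s}$, giving $\Phi(\bar{t})\le \dot{s}$.

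The main obstacle in both parts is precisely that the data defining a term---the length of a sequence in (1) and the support of a condition in (2)---is a $P$-name rather than a ground-model object; everything hinges on the uniform bounding of such names supplied by the $\kappa$-c.c. together with the regularity of $\kappa$, after which the padding arguments and the counting $\gamma^{<\kappa}=\gamma$ finish the job. Laver's term-forcing identity in Lemma \ref{termforcing} is not needed here beyond fixing the notion of $T(P,\dot{Q})$; it is invoked downstream in Lemma \ref{levyprojection}.
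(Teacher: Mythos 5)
Your proposal is correct and follows essentially the same route as the paper: first show the terms with ground-model domain (resp.\ support) are dense via the $\kappa$-c.c.\ and regularity of $\kappa$, then identify that dense set coordinatewise with ${}^{<\kappa}\gamma$ (resp.\ the product of term forcings) using a complete set of representatives of $P$-names for ordinals below $\gamma$. The only difference is that you spell out the cardinality computation $|N|=\gamma$ from $|P|\le\kappa$ and $\gamma^{<\kappa}=\gamma$, which the paper asserts without proof.
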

\begin{proof}
 (1) Note that $D = \{\dot{q} \in T(P,\dot{{^{<\kappa}}\gamma}) \mid \exists \delta(\force \mathrm{dom}(\dot{q}) = \delta)\}$ is dense in $T(P,\dot{{^{<\kappa}}\gamma})$. For each $\dot{p} \in T(P,\dot{{^{<\kappa}}\gamma})$, by the $\kappa$-c.c. of $P$, there is a $\delta < \kappa$ with $\force \mathrm{dom}(\dot{p}) < \delta$. The usual density argument takes $\dot{q} \in D$ with $\force \dot{q} \leq \dot{p}$.

By the assumption, there is a sequence $\langle \dot{\tau}_\alpha \mid \alpha < \gamma \rangle$ of $P$-names for ordinals below $\gamma$ with the following properties:
\begin{itemize}
 \item $\force \dot{\tau} \in \gamma$ implies $\force \dot{\tau} = \dot{\tau}_\alpha$ for some $\alpha$.
 \item $\not\force \dot{\tau}_\alpha = \dot{\tau}_\beta$ for all $\alpha < \beta$.
\end{itemize}
 For each $p \in {^{<\kappa}\gamma}$, the mapping which sends $p$ onto $\langle{\dot{\tau}_{p(\xi)} \mid \xi \in \mathrm{dom}(p)}\rangle$ is an isomorphism between ${^{<\kappa}\gamma}$ and $D$. This is a required embedding.

(2) Note that $E = \{\dot{q} \in T(P,\prod_{\gamma \in I}^{<\kappa}\dot{Q}_{\gamma}) \mid \exists d \subseteq I( \force \mathrm{supp}(\dot{q}) = d)\}$ is dense in $T(P,\prod_{\gamma \in I}^{<\kappa}\dot{Q}_{\gamma})$, as we have seen in (1). The natural isomorphism from $\prod_{\gamma \in I}^{<\kappa} T(P,\dot{Q}_{\gamma})$ onto $E$ works. 
\end{proof}
From this, we have
\begin{proof}[Proof of Lemma~\ref{levyprojection}]
 We remark that $P$ does not change the class of all $\kappa$-closed cardinals. The required projection follows by,
 \begin{align*}
  P \times \mathrm{Coll}(\kappa,<\lambda) &= P \times \textstyle{\prod_{\gamma \in [\kappa^{+},<\lambda)_{\kappa\text{-cl}}}^{<\kappa} {^{<\kappa}\gamma}} \\
  &\to P \times \textstyle{\prod_{\gamma \in [\kappa^{+},<\lambda)_{\kappa\text{-cl}}}^{<\kappa} T(P,\dot{{^{<\kappa}\gamma}})} \\ 
  &\to P \times T(P,\textstyle{\prod_{\gamma \in [\kappa^{+},<\lambda)_{\kappa\text{-cl}}}^{<\kappa} \dot{{^{<\kappa}\gamma}}}) \\ 
  &= P \times T(P,\dot{\mathrm{Coll}}(\kappa,<\lambda)) \to P \ast \dot{\mathrm{Coll}}(\kappa,<\lambda).
 \end{align*}
 The second line and the third line follow by Lemma~\ref{termproduct} (1) and (2), respectively. The last line follows from Lemma~\ref{termforcing}. 

For continuity, we may assume that $P$ is a complete Boolean algebra. For $q\in \mathrm{Coll}(\kappa,<\lambda)$, let $\dot{q}$ be a $P$-name such that $\pi_0(\emptyset,q) = \langle \emptyset, \dot{q} \rangle$. Then $\dot{q}$ is a $P$-name such that
\begin{itemize}
 \item $P \force \mathrm{dom}(\dot{q}) = \mathrm{dom}(q) \subseteq \lambda \times [\kappa^{+},\lambda)_{\kappa\text{-cl}}$.
 \item $P \force \dot{p}(\xi,\zeta) = \dot{\tau}_{p(\xi,\zeta)}^{\zeta}$ for all $\langle \xi,\zeta \rangle \in \mathrm{dom}(q))$.
\end{itemize}
Here, $\langle{\dot{\tau}^{\zeta}_\alpha \mid \alpha < \zeta}\rangle$ is a sequence of $P$-names defined in Lemma~\ref{termproduct} and we identify an element of $\mathrm{Coll}(\kappa,<\lambda)$ with a partial function from $\kappa \times [\kappa^{+},\lambda)_{\kappa\text{-cl}}$ to $\lambda$. We have $\pi_0(p,q) = \langle p,\dot{q} \rangle$ for all $\langle p,q \rangle \in P \times \mathrm{Coll}(\kappa,<\lambda)$.

 Consider a set $Z = \{ \langle p_i,q_i\rangle \mid i < \nu \} \subseteq P \times \mathrm{Coll}(\kappa,<\lambda)$ such that $\prod Z \in P \times \mathrm{Coll}(\kappa,<\lambda)$. We remark that $\prod Z = \langle \prod_i p_i,\bigcup_i q_i \rangle$. We let $p = \prod_i p_i$ and $q = \bigcup_i q_i$. Our goal is showing $\pi_0(p,q) = \prod \pi_0``Z$. Note that
\begin{itemize}
 \item $P \force \dot{q}_i(\xi,\zeta) = \dot{\tau}^{\zeta}_{q_i(\xi,\zeta)} = \dot{\tau}^{\zeta}_{q(\xi,\zeta)} = \dot{q}(\xi,\zeta)$ for each $\langle\xi,\zeta\rangle \in \mathrm{dom}(p_i)$. 
 \item $P \force \mathrm{dom}(\dot{q}) = \mathrm{dom}(q) = \bigcup_{i < \nu}\mathrm{dom}({p}_i) = \bigcup_{i < \nu}\mathrm{dom}(\dot{p}_i)$.
\end{itemize}
 Therefore, $p \force \dot{q} = \bigcup_{i}\dot{p}_i = \prod_i \dot{p}_i$. In particular, we have $\pi_0(p,q) = \langle p,\dot{q} \rangle = \prod \pi_0``Z$.
\end{proof}

\begin{proof}[Proof of Lemma \ref{mainprojection}]
$\pi$ is defined as 
\begin{align*}
 P(\mu,\nu) & = \textstyle{\prod^{<\mu}_{\alpha \in [\mu,\nu)\cap \mathrm{Reg}} \mathrm{Coll}(\alpha,<\nu)} \\ &\to \textstyle{\prod^{<\mu}_{\alpha \in [\mu,\kappa)\cap \mathrm{Reg}} \mathrm{Coll}(\alpha,<\nu) \times \mathrm{Coll}(\lambda,<\nu)} \\ 
&\to \textstyle{\prod^{<\mu}_{\alpha \in [\mu,\kappa)\cap \mathrm{Reg}} \mathrm{Coll}(\alpha,<\kappa) \times \mathrm{Coll}(\lambda,<\nu)} \\
&= P(\mu,\kappa) \times \mathrm{Coll}(\lambda,<\nu) \\
&\to P(\mu,\kappa) \ast \dot{\mathrm{Coll}}(\lambda,<\nu).
\end{align*} 
The last line follows by Lemma \ref{levyprojection}. By the definition of $\pi$, $\pi(p) = \langle p,\emptyset \rangle$ for all $p \in P(\mu,\kappa)$.
\end{proof}

\section{Proof of Theorem \ref{maintheorem}}
In this section is devoted to 
\begin{proof}[Proof of Theorem \ref{maintheorem}]

We let $P = P(\mu,\kappa)$. We remark that $j(P) = P(\mu,j(\kappa))$ has the $j(\kappa)$-c.c. by the almost-hugeness. By Lemma \ref{mainprojection}, we have a continuous projection from $\pi:j(P) \to P \ast \dot{\mathrm{Coll}}(\lambda,<j(\kappa))$. 

Note that $P \subseteq V_{\kappa}$, $P \lessdot j(P)$, and, $\pi$, $\pi(p) = \langle p,\emptyset \rangle$ for all $p \in P$.

Let $G \ast H$ be an arbitrary $(V,P \ast \dot{\mathrm{Coll}}(\lambda,<j(\kappa)))$-generic filter. First, we give a saturated ideal on $\mathcal{P}_{\kappa}\lambda$ in $V[G][H]$. Let $\overline{G}$ be an arbitrary $(V,j(P))$-generic with $\pi``\overline{G} \subseteq G \ast H$. Note that $j``G = G \subseteq \overline{G}$, which in turn implies that $j$ lifts to $j:V[G] \to M[\overline{G}]$ in $V[\overline{G}]$ such that $j(G) = \overline{G}$. By the $j(\kappa)$-c.c. of $j(P)$, ${^{<j(\kappa)}M[\overline{G}]} \subseteq M[\overline{G}]$. Let $m_{\alpha}$ be the coordinate-wise union of $j``(H \cap \mathrm{Coll}^{M[\overline{G}]}(j(\lambda),<\alpha))$. $m_{\alpha} \in \mathrm{Coll}^{M[\overline{G}]}(j(\lambda),<jj(\kappa))$ for all $\alpha < j(\kappa)$ by the closure property of $M[G]$ and the directed closedness of $\mathrm{Coll}^{M[\overline{G}]}(j(\lambda),<jj(\kappa))$. By the $j(\kappa)$-c.c., we can choose a list $\langle X_{\alpha} \mid \alpha < j(\kappa) \rangle$ of $P \ast \dot{\mathrm{Coll}}(\lambda,<j(\kappa))$-names of all subset in $\mathcal{P}_\kappa\lambda$. There is a descending chain $\langle s_{\alpha} \mid \alpha < j(\kappa) \rangle$ with the following properties:
\begin{itemize}
 \item $s_{\alpha}\leq m_{\alpha}$. 
 \item $s_{\alpha}$ decides $j``\lambda \in j(\dot{X}_{\alpha})$.
\end{itemize}
 Let $U = \{\dot{X}^{G \ast H} \mid \exists \beta(s_\beta \force j``\lambda \in j(\dot{X}))\}$. $U$ is a $V[G][H]$-normal $V[G][H]$-ultrafilter over $\mathcal{P}_\kappa\lambda$. Because $\overline{G}$ was an arbitrary $(V,j(P))$-generic with $\pi``\overline{G} \subseteq G \ast H $, we can take a $j(P) / G \ast H$-name $\dot{U}$ for such ultrafilter. Let $I$ be define by 
 \begin{center}
  $X \in I$ if and only if $j(P) / G \ast H \force \mathcal{P}_{\kappa} \lambda \setminus X \in \dot{U}$. 
 \end{center}
The standard argument shows that $I$ is a normal and fine ideal over $\mathcal{P}_{\kappa}\lambda$. Towards a showing $j(\kappa)$-saturation of $I$, let $\langle{X_\xi \mid \xi \in K}\rangle$ be an anti-chain in $\mathcal{P}(\mathcal{P}_{\kappa}\lambda)$, we have the following:
\begin{itemize}
 \item $||X_\xi \in \dot{U}||\cdot ||X_\zeta \in \dot{U}|| = ||X_\xi \cap X_\zeta \in \dot{U}|| = 0$ for each $\xi \not= \zeta$ in $K$.
 \item $||X_{\xi} \in \dot{U}|| \not= 0$ for each $\xi \in K$.
\end{itemize}
 It follows that $\{||X_{\xi} \in \dot{U}|| \mid \xi \in K\}$ is an anti-chain in $\mathcal{B}(j(P) / G \ast H)$. Note that $j(P) = P(\mu,j(\kappa))$ has the $j(\kappa)$-c.c., and thus $j(P) / G \ast H$ has the $j(\kappa)$-c.c. Therefore $|K| < j(\kappa)$, as desired.
 
\begin{clam}\label{duality}
 $j(P)/G \ast H \simeq \mathcal{P}(\mathcal{P}_\kappa\lambda) / I$.
\end{clam}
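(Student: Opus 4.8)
The plan is to establish a dense embedding between the quotient forcing $j(P)/G \ast H$ and the ideal quotient $\mathcal{P}(\mathcal{P}_\kappa\lambda)/I$, which by the standard correspondence yields the isomorphism after passing to separative quotients (recall the convention that posets are identified with their separative quotients). The two maps go in opposite directions: from a condition in the quotient forcing one reads off the set of $\mathcal{P}_\kappa\lambda$ it forces into the generic ultrafilter, and from a positive set $X$ one recovers the Boolean value $\|X \in \dot U\|$ inside $\mathcal{B}(j(P)/G\ast H)$.

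First I would recall the machinery already set up: for any $X \in \mathcal{P}(\mathcal{P}_\kappa\lambda)$ computed in $V[G][H]$, the value $\|X \in \dot U\|$ lives in $\mathcal{B}(j(P)/G\ast H)$, and the proof of $j(\kappa)$-saturation above already showed that the map $X \mapsto \|X \in \dot U\|$ sends antichains to antichains and positive sets to nonzero conditions. The key algebraic facts to verify are that this assignment is a homomorphism for the relevant operations: $\|X \cap Y \in \dot U\| = \|X \in \dot U\| \cdot \|Y \in \dot U\|$ because $\dot U$ is forced to be an ultrafilter, and $X \in I$ (equivalently $X \leq_I \emptyset$) precisely when $\|X \in \dot U\| = 0$, by the very definition of $I$. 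This shows the map $X \mapsto \|X \in \dot U\|$ factors through the separative quotient $\mathcal{P}(\mathcal{P}_\kappa\lambda)/I$ and is order-preserving, with $X \leq_I Y$ corresponding to $\|X \in \dot U\| \leq \|Y \in \dot U\|$.

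Next I would check that this embedding has dense range, which is where the genericity enters. Given a condition $c \in j(P)/G\ast H$, I want to find a positive set $X$ with $\|X \in \dot U\| \geq c$ (or densely below $c$); the natural candidate is to take $X$ to be (a name for) the set that $c$ forces to witness membership, using that $c$ decides, for $j``\lambda$, which reflected sets $j(\dot X)$ contain it. Concretely, using the descending chain $\langle s_\alpha\rangle$ and the enumeration $\langle \dot X_\alpha\rangle$ of all names for subsets of $\mathcal{P}_\kappa\lambda$, every condition in the quotient is compatible with some $\|X \in \dot U\|$, because the quotient generic determines $U$ completely. This surjectivity-up-to-density is the substance of the claim and should be read off from the construction of $U$ and $I$.

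The main obstacle I expect is the direction showing density, i.e.\ that \emph{every} nonzero condition in $j(P)/G\ast H$ is of the form $\|X \in \dot U\|$ up to refinement — one must argue that the quotient forcing adds no information beyond which sets land in $\dot U$, so that $\dot U$ (equivalently the membership pattern of $j``\lambda$) generates the generic object. This is essentially the statement that $\dot U$ is a \emph{generic} ultrafilter for $\mathcal{P}(\mathcal{P}_\kappa\lambda)/I$, and it relies on the fact that any $(V,j(P))$-generic $\overline G$ with $\pi``\overline G \subseteq G\ast H$ is recoverable from the induced ultrafilter together with $G\ast H$. Once density is in hand, the isomorphism of separative quotients is immediate, since a dense embedding between two separative posets induces an isomorphism of their Boolean completions, and both sides here are already identified with their separative quotients.
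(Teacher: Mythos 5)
Your overall strategy---realizing the isomorphism via the map $\tau\colon X\mapsto \|X\in\dot U\|$ and reducing everything to showing that $\tau$ has dense range---is exactly the paper's, and your observations that $\tau$ respects the Boolean operations and kills precisely the sets in $I$ are fine. But the density of the range, which you yourself flag as ``the main obstacle,'' is exactly the part you do not prove, and it is the entire content of the claim. Saying that every condition in the quotient is \emph{compatible} with some $\|X\in\dot U\|$ is far too weak (every condition is compatible with $\|\mathcal{P}_\kappa\lambda\in\dot U\|=1$), and the assertion that $\overline G$ is ``recoverable from the induced ultrafilter together with $G\ast H$'' is essentially a restatement of the claim rather than an argument for it.

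What the paper actually does to get density is the Foreman--Magidor--Shelah argument: given $q\in j(P)/G\ast H$, pick $\beta$ with $q\in V_\beta$ and an inaccessible $\alpha\in(\beta,j(\kappa))$, restrict the ultrafilter to $U_\alpha=\dot U^{\overline G}\cap \mathcal{P}(\mathcal{P}_\kappa\lambda)^{V[G][H\upharpoonright\alpha]}$, form the ultrapower $i\colon V[G][H\upharpoonright\alpha]\to N$ by $U_\alpha$, and factor $j=k\circ i$. The key computation is that $\mathrm{crit}(k)\ge\alpha$: since the collapses make $\kappa=\mu^{+}$ and $\alpha=\lambda^{+}$ in $V[G][H\upharpoonright\alpha]$, and $\mathcal{P}(\lambda)^{V[G][H\upharpoonright\alpha]}\subseteq N$, the model $N$ has no cardinals strictly between $\kappa$ and $\alpha$, while $\mathrm{crit}(k)$ must be an $N$-cardinal $\ge\kappa$. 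Then $i(P)\cap V_\beta^{N}$ can be enumerated in $N$ in order type $\lambda<\mathrm{crit}(k)$, so $k$ fixes that enumeration pointwise, $q$ lies in the range of $k$, and hence $q=[f_q]_{U_\alpha}$ for some $f_q\colon\mathcal{P}_\kappa\lambda\to V[G][H]$; one then checks $\|\{a\mid f_q(a)\in G\}\in\dot U\|=q$. Without this factor-map argument (or a substitute for it) the proof does not go through: this is where the almost-hugeness and the specific collapse structure are actually used, and it cannot simply be ``read off from the construction of $U$.''
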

\begin{proof}[Proof of Claim]
 
The proof is based on Foreman--Magidor--Shelah~\cite{MR942519}. As in the previous argument, let us consider a mapping $\tau:\mathcal{P}(\mathcal{P}_\kappa\lambda) / I \to \mathcal{B}(j(P)/G \ast H)$ that sends $X$ to $||X \in \dot{U}||$. The standard argument shows that $\tau$ is a complete embeding and $\dot{U}$ is a $j(P)/G \ast H$-name for $(V[G][H],\mathcal{P}(\mathcal{P}_\kappa\lambda) / I)$-generic filter generated by $\tau^{-1}\dot{\overline{G}}$. Here, $\dot{\overline{G}}$ is the canonical name of $(V[G][H],j(P)/G \ast H)$-generic filter. It is enough to prove that $\tau$ is a dense embedding. 

We claim that there is an $f_q$ such that $||\{a \in \mathcal{P}_\kappa\lambda \mid f_q(a) \in G\} \in \dot{U}|| = q$ for every $q \in j(P) / G \ast H$. It follows that the range of $\tau$ is a dense subset in $\mathcal{B}(j(P) / G \ast H)$. 

Let $\overline{G}$ be an arbitrary $(V,j(P))$-generic filter with $\pi``\overline{G} \subseteq G \ast H$. Note that $q \in j(P) \cap V_{\beta}$ for some $\beta < j(\kappa)$. By the elementarity of $j$ and $j$ is almost-huge, we can choose inaccessible $\alpha < j(\kappa)$ with $\alpha > \beta$. Let $U_\alpha = \dot{U}^{\overline{G}} \cap \mathcal{P}(\mathcal{P}_\kappa\lambda)^{V[G][H\upharpoonright\alpha]}$. By the definition of $\dot{U}$, we can choose a $(V[\overline{G}],\mathrm{Coll}^{M[\overline{G}]}(j(\lambda),<jj(\kappa)))$-generic filter $\overline{H}$ such that, 
\begin{itemize}
 \item $j$ lifts to $j:V[G][H\upharpoonright \alpha] \to M[\overline{G}][\overline{H}\upharpoonright {j(\alpha)}]$ and $j(G) = \overline{G}$.
 \item $X \in U_\alpha$ if and only if $j``\lambda \in j(X)$ in $M[\overline{G}][\overline{H}\upharpoonright j(\alpha)]$.
\end{itemize}
Here, $H\upharpoonright \alpha = H \cap \mathrm{Coll}(\lambda,<\alpha)$ and $\overline{H} \upharpoonright j(\alpha) = \overline{H} \cap {\mathrm{Coll}^{M[\overline{G}]}(j(\lambda),<j(\alpha))}$. We can consider the following commutative diagram of elementary embeddings. 
   \begin{center}
   \begin{tikzpicture}[auto,->]
    \node (vtwo) at (0,-1) {$V[G][H\upharpoonright \alpha]$};
    \node (mtwo) at (5,-1) {$M[\overline{G}][\overline{H}\upharpoonright j(\alpha)]$};

    \node (n) at (2.5,-3) {$N$};
    \draw (vtwo) --node {$\scriptstyle {j}$} (mtwo);
    \draw (vtwo) --node[swap] {$\scriptstyle i$} (n);
    \draw (n) --node[swap] {$\scriptstyle k$} (mtwo);
   \end{tikzpicture}
  \end{center}
 Here, $i:V \to N \simeq \mathrm{Ult}(V[G][H\upharpoonright \alpha],U_\alpha)$ is an ultrapower mapping and $k$ is defined by $k([f]_{\dot{U}_\alpha}) = j(f)(j``\lambda)$. It is easy to see that $k$ is an elementary embedding. We claim $\mathrm{crit}(k) \geq \alpha$. Because $\alpha$ is inaccessible, $\lambda^{+} = \alpha$ in $V[G][H\upharpoonright \alpha]$. We remark that $\mathcal{P}(\lambda)^{V[G][H \upharpoonright \alpha]} \subseteq N$ and $i(\kappa) \geq \alpha = \lambda^{+}$.  $\mathcal{P}(\lambda)^{V[G][H \upharpoonright \alpha]} \subseteq N$ follows by, for each $x$, $x$ can be written as $\{\xi \in \lambda \mid i(\xi) \in i``\lambda\cap i(x)\}$. That is, $x$ is definable in $N$ by the normality of $U_\alpha$. By $\kappa = \mu^{+}$ in $V[G][H\upharpoonright \alpha]$, $N \models i(\kappa)$ is the least cardinal greater than $\mu$. $N$ has no cardinals between $\kappa$ and $\alpha$. On the other hand, $\mathrm{crit}(k) \geq \kappa$ must be cardinal in $N$. Therefore $\mathrm{crit}(k) \geq \alpha$.

 Let us find a name of $q$ in $N$. Since $|V_\beta| = \lambda < \alpha$ holds in $V[G][H \upharpoonright \alpha]$, the same thing holds in $N$.  We can enumerate $i(P) \cap V_\beta^{N}$ as $\langle q_\xi \mid \xi < \lambda \rangle$ in $N$. By $\mathrm{crit}(k) \geq \alpha > \beta$, $k$ is the identity mapping on $V_\beta$, and thus, $k(\langle q_\xi \mid \xi < \lambda \rangle) = \langle q_\xi \mid \xi < \lambda \rangle$. By the elementarity of $k$, $q$ appears in this sequence, that is there exists a $\xi$ such that $q_\xi = k(q_\xi) = q \in N$. We can choose $x$ with $[{x}]_{U_{\alpha}} = q$. Since $\dot{U}$ is $(V[G][H],\mathcal{P}(\mathcal{P}_\kappa\lambda)/ I)$-generic, $\overline{G}$ is an arbitrary, and, $I$ is a saturated ideal, there is an $f_q:\mathcal{P}_\kappa\lambda \to V[G][H]$ such that $\mathcal{P}(\mathcal{P}_\kappa\lambda)/I \force q = [f_q]_{\dot{U}_\alpha}$. 

 Therefore $||\{a \in \mathcal{P}_\kappa \lambda \mid f_q(a) \in G\} \in \dot{U}|| = q$ follows by
\begin{align*}
 \{a \in \mathcal{P}_\kappa\lambda \mid f_q(a) \in G\} \in U &\Leftrightarrow[f_q]_{U_\alpha} = q \in i(G)(\text{for some }\alpha)\\ &\Leftrightarrow k(q) = q \in k \circ i(G) = \overline{G}.
\end{align*}
\end{proof}
By virtue of Claim \ref{duality}, items (1)--(4) follow from the corresponding claims for the quotient forcing $j(P)/G \ast H$. From now on, we discuss in $V$.

Items (1) and (2) follow from Claim \ref{ccconclusion}. 
\begin{clam}\label{ccconclusion}\renewcommand{\labelenumi}{(\roman{enumi})}
 $P \ast \dot{\mathrm{Coll}}(\lambda,<j(\kappa))$ forces that 
 \begin{enumerate}
  \item $j(P) / \dot{G} \ast \dot{H}$ has the $(j(\kappa),j(\kappa),<\mu)$-c.c. 
  \item $j(P) / \dot{G} \ast \dot{H}$ does not have the $(j(\kappa),\mu,\mu)$-c.c. 
 \end{enumerate}
\end{clam}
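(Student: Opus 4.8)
The plan is to treat the two items separately: (i) is an instance of Lemma~\ref{ccquotient} applied to the projection of Lemma~\ref{mainprojection}, while (ii) is obtained by exhibiting an explicit witnessing family in the quotient.

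For (i), I would invoke Lemma~\ref{ccquotient} for the continuous projection $\pi\colon j(P)\to P\ast\dot{\mathrm{Coll}}(\lambda,<j(\kappa))$ of Lemma~\ref{mainprojection}, with $j(\kappa)$ in the role of ``$\lambda$'' and $\mu$ unchanged. Three hypotheses must be verified. First, $P\ast\dot{\mathrm{Coll}}(\lambda,<j(\kappa))$ is $(<\mu,j(\kappa))$-distributive: $P=P(\mu,\kappa)$ is $<\mu$-directed closed and $\dot{\mathrm{Coll}}(\lambda,<j(\kappa))$ is forced to be $\lambda$-directed closed with $\lambda\geq\kappa>\mu$, so the two-step iteration is $<\mu$-closed and adds no $<\mu$-sequences at all. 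Second, $\pi$ is $<\mu$-continuous, being continuous by Lemma~\ref{mainprojection}. Third, $j(P)=P(\mu,j(\kappa))$ has the $(j(\kappa),j(\kappa),<\mu)$-c.c.; this is the earlier lemma on $P(\mu,-)$ applied at $j(\kappa)$, which is legitimate because $j(\kappa)$ is inaccessible in $V$ (it is inaccessible in $M$ by elementarity, and $M^{<j(\kappa)}\subseteq M$). Lemma~\ref{ccquotient} then yields (i) verbatim.

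For (ii), I would first rewrite the $(j(\kappa),\mu,\mu)$-c.c.\ as the $(j(\kappa),\mu,<\mu^{+})$-c.c., so that its failure amounts to producing $X$ of size $j(\kappa)$ in the quotient no $\mu$-sized subfamily $Y$ of which has a lower bound (here $Y$ itself belongs to $[Y]^{\leq\mu}$ and so would be required to be bounded). The witnesses exploit the $<\mu$-support of $j(P)$. After forcing with $P\ast\dot{\mathrm{Coll}}(\lambda,<j(\kappa))$ to obtain $G\ast H$, for each $\mu$-closed cardinal $\gamma\in[\kappa,j(\kappa))$ let $x_\gamma\in j(P)$ be the condition supported on the single coordinate $\alpha=\mu$, whose value $x_\gamma(\mu)\in\mathrm{Coll}(\mu,<j(\kappa))$ is supported on the single coordinate $\gamma$ and equals there a fixed nontrivial node of ${}^{<\mu}\gamma$. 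Since $\pi$ discards exactly the coordinates $\gamma\geq\kappa$ of the $\mu$-th factor, one computes $\pi(x_\gamma)=1$, so every $x_\gamma$ lies in $j(P)/G\ast H$; there are $j(\kappa)$-many such $\gamma$, so $|X|=j(\kappa)$. Any $Y\in[X]^{\mu}$ would need a lower bound $r$ in the quotient, hence in $j(P)$, whose $\mu$-th coordinate has support $\supseteq\{\gamma: x_\gamma\in Y\}$ of size $\mu$, contradicting the $<\mu$-support of $\mathrm{Coll}(\mu,<j(\kappa))$. Thus no $\mu$-subfamily is bounded, giving (ii); as the $x_\gamma$ are ground-model elements of $j(P)$, $X$ may be taken to be a check name.

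The routine part is (i); the slightly delicate points, on which I would concentrate, are in (ii): tracing $x_\gamma$ through the three stages of $\pi$ in Lemma~\ref{mainprojection} to confirm $\pi(x_\gamma)=1$ (the first stage retains the $\mu$-th coordinate, the second restricts that coordinate to values below $\kappa$ and hence erases the value at $\gamma$), and confirming that a lower bound computed in the quotient forcing really is a lower bound in $j(P)$, so that the $<\mu$-support obstruction applies. Pairwise, and indeed $<\mu$-wise, the $x_\gamma$ remain compatible, which is precisely why this same $X$ is consistent with the $(j(\kappa),j(\kappa),<\mu)$-c.c.\ obtained in (i).
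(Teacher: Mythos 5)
Your proof of (i) is exactly the paper's: apply Lemma~\ref{ccquotient} to the continuous projection of Lemma~\ref{mainprojection}, using the $<\mu$-closure of $P \ast \dot{\mathrm{Coll}}(\lambda,<j(\kappa))$ for distributivity and the $(j(\kappa),j(\kappa),<\mu)$-c.c.\ of $P(\mu,j(\kappa))$ at the inaccessible $j(\kappa)$. For (ii) the paper runs the same support-counting argument as you, but with a different witnessing family: it takes conditions $r_\alpha$ with $\mathrm{supp}(r_\alpha) = \{\alpha\}$ for $\alpha \in [\lambda^{+},j(\kappa)) \cap \mathrm{Reg}$, so the obstruction is the $<\mu$-support of the outer diagonal product, whereas you sit at the single outer coordinate $\mu$ and vary the inner coordinate $\gamma \in [\kappa,j(\kappa))_{\mu\text{-cl}}$ of $\mathrm{Coll}(\mu,<j(\kappa))$, so the obstruction is the $<\mu$-support of that one Levy collapse. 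Both families are annihilated by $\pi$ and both work; the choice is immaterial. The one step you rightly flag as delicate --- passing from ``$r$ is a lower bound of $Y$ in the quotient order'' to the support contradiction --- is best finished the way the paper does it: given $r$ with $|\mathrm{supp}(r(\mu))| < \mu$, pick $\gamma \in Z \setminus \mathrm{supp}(r(\mu))$ and extend $r$ at the discarded coordinate $\gamma$ to a condition $s$ incompatible with $x_\gamma$ in $j(P)$; continuity gives $\pi(s) = \pi(r)$, so $s$ lies in the quotient, extends $r$ there, and witnesses $r \not\leq x_\gamma$ in the quotient. This sidesteps having to argue directly that the quotient order agrees with the $j(P)$ order on your family (which does hold here, but only because the $\pi$-discarded coordinates split off as a product factor).
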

\begin{proof}[Proof of Claim]
 For (i), We recall that $j(P)$ has the $(j(\kappa),j(\kappa),<\mu)$-c.c. and $P \ast \dot{\mathrm{Coll}}(\lambda,<j(\kappa))$ is $\mu$-closed. By Lemma \ref{ccquotient} and the continuity of $\pi$, it is forced by $P \ast \dot{\mathrm{Coll}}(\lambda,<j(\kappa))$ that $j(P) / \dot{G} \ast \dot{H}$ has the $(j(\kappa),j(\kappa),<\mu)$-c.c. 

 We prove (ii) by contradiction. Suppose otherwise. We consider a set $X= \{r_\alpha \mid \alpha \in [\lambda^{+}, j(\kappa))\cap \mathrm{Reg}\} \subseteq j(P)$ with $\mathrm{supp}(r_\alpha) = \{\alpha\}$ for every $\alpha$. By the definition of $\pi$, $\pi(r_\alpha) = \langle{\emptyset,\emptyset}\rangle$. Therefore $P \ast \dot{\mathrm{Coll}}(\lambda,<j(\kappa)) \force r_\alpha \in j(P) / \dot{G} \ast \dot{H}$ for every $\alpha$. By the assumption, there are $\dot{Z}$, $r \in j(P)$ and $\langle p,\dot{q} \rangle$ such that, $\langle p,\dot{q} \rangle$ forces that 
\begin{itemize}
 \item $r$ is a lower bound in $\{r_\alpha \mid \alpha \in \dot{Z}\}$ in $j(P) / \dot{G} \ast \dot{H}$, and
 \item $|\dot{Z}| = \mu$.
\end{itemize} 
Since $|\mathrm{supp}(r)| < \mu$, we can choose $\beta$ and $\langle p',\dot{q}' \rangle \leq \langle p,\dot{q} \rangle$ such that $\langle p',\dot{q}' \rangle \force \beta \in \dot{Z} \setminus \mathrm{supp}(r)$. Clearly, $\langle p',\dot{q}' \rangle$ does not force that $r \leq r_\beta$ in $j(P)/\dot{G} \ast \dot{H}$ but this is a contradiction. 
\end{proof}

Note that $P \ast \dot{\mathrm{Coll}}(\lambda,<j(\kappa)) \force \mathcal{P}(\mathcal{P}_{\kappa}\lambda) / \dot{I}$ is a complete Boolean algebra because $\dot{I}$ is saturated. The poset $P \ast \dot{\mathrm{Coll}}(\lambda,<j(\kappa))$ forces $|\mathcal{P}(\mathcal{P}_{\kappa}\lambda) /\dot{I}| \leq 2^{\lambda^{<\kappa}} = 2^{\lambda} = j(\kappa)$, and thus, $|\mathcal{B}(j(P) / \dot{G} \ast \dot{H})| \leq j(\kappa)$. Thus, the notion of $S$-layering sequence is not meaningless. Item (3) follows from Claim \ref{layeredconclusion}.
\begin{clam}\label{layeredconclusion}\renewcommand{\labelenumi}{(\roman{enumi})}
\begin{enumerate}
 \item If $j(\kappa)$ is Mahlo, then $P \ast \dot{\mathrm{Coll}}(\lambda,<j(\kappa)) \force j(P) / \dot{G} \ast \dot{H}$ is $S$-layered for some stationary subset $S \subseteq E^{j(\kappa)}_\lambda$. 
 \item If $j(\kappa)$ is not Mahlo, then $P \ast \dot{\mathrm{Coll}}(\lambda,<j(\kappa)) \force  j(P) / \dot{G} \ast \dot{H}$ is not $S$-layered for any stationary subset $S\subseteq \lambda$. 
\end{enumerate} 
\end{clam}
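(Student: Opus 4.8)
The plan is to transport the (non\nobreakdash-)layeredness of $j(P) = P(\mu,j(\kappa))$, analysed in Lemma~\ref{diagonallayered}, across the continuous projection $\pi : j(P) \to P \ast \dot{\mathrm{Coll}}(\lambda,<j(\kappa))$ of Lemma~\ref{mainprojection}. I fix an arbitrary generic $G \ast H$ and work in $V[G][H]$, writing $Q^{*} = j(P)/G\ast H$ for the quotient; recall that $j(\kappa)$ is inaccessible and $\lambda^{+} = j(\kappa)$ there.

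For (i), let $S = \mathrm{Reg}^{V} \cap (\lambda,j(\kappa))$. Since $j(\kappa)$ is Mahlo, the inaccessibles in $(\lambda,j(\kappa))$ are stationary, so $S$ is a stationary subset of the index set $[\mu,j(\kappa))\cap\mathrm{Reg}$ along which Lemma~\ref{diagonallayered}(1) layers $j(P)$; hence $j(P)$ is $S$-layered. As $\pi$ is continuous, hence $2$-continuous, Lemma~\ref{layeredquotient} yields that $P \ast \dot{\mathrm{Coll}}(\lambda,<j(\kappa))$ forces $Q^{*}$ to be $S$-layered. The one point needing care is $S \subseteq E^{j(\kappa)}_\lambda$: for $\gamma \in S$, $\gamma$ stays regular after the $\kappa$-c.c.\ forcing $P$ and is then collapsed to cardinality $\lambda$ by $\mathrm{Coll}(\lambda,<j(\kappa))$, whose $<\lambda$-closure adds no cofinal sequence of length $<\lambda$; thus $\mathrm{cf}^{V[G][H]}(\gamma) = \lambda$. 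Since $P \ast \dot{\mathrm{Coll}}(\lambda,<j(\kappa))$ has the $j(\kappa)$-c.c., $S$ stays stationary, and (i) follows.

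For (ii), I will exhibit a single filtration of $Q^{*}$ along which $\lessdot$ fails on a club, and then quote Lemma~\ref{layeredchar}(3). Put $R_\delta = \{q \in \bigcup_{\eta<\delta}P(\mu,\eta) \mid \pi(q)\in G\ast H\}$; this is $\subseteq$-increasing and continuous with union $Q^{*}$ and $|R_\delta|<j(\kappa)$. As $j(\kappa)$ is inaccessible but not Mahlo, there is a club $D$ of singular limit cardinals below $j(\kappa)$, contained in the club $C$ of Lemma~\ref{regularchar}(2) and above $\lambda$; $D$ is still club in $V[G][H]$. Fix $\delta\in D$ and let $p$ be the condition furnished by Lemma~\ref{singularcase}. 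Because $\mathrm{supp}(p)\cap(\lambda+1)=\emptyset$, the projection of Lemma~\ref{mainprojection} sends $p$ to $\langle\emptyset,\emptyset\rangle$, so $p\in Q^{*}$. Given any $b\in R_\delta$, the second clause of Lemma~\ref{singularcase} (applied to $q=b$) gives $r\in\bigcup_{\eta<\delta}P(\mu,\eta)$ with $\mathrm{supp}(r)\cap(\lambda+1)=\emptyset$, $r\perp p$, and $r\cdot b\in\bigcup_{\eta<\delta}P(\mu,\eta)$. Continuity gives $\pi(r\cdot b)=\pi(r)\cdot\pi(b)=\pi(b)\in G\ast H$, so $r\cdot b\in R_\delta$ extends $b$, while $r\cdot b\cdot p\le r\cdot p=0$ shows $r\cdot b\perp p$ in $Q^{*}$. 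Hence no $b\in R_\delta$ is a reduct of $p$, so $R_\delta\not\lessdot Q^{*}$. For any stationary $S\subseteq j(\kappa)$ and any club $C'$, some $\delta\in S\cap C'\cap D$ then violates the requirement of Lemma~\ref{layeredchar}(3), so $Q^{*}$ is not $S$-layered.

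The crux---and the reason Lemma~\ref{singularcase} was isolated with its precise support demands---is the second half of (ii): the condition $p$ that witnesses $\bigcup_{\eta<\delta}P(\mu,\eta)\not\lessdot P(\mu,\delta)$ must survive into $Q^{*}$ and keep refuting $\lessdot$ there. This requires both that $\pi(p)$ be trivial, so that $p$ is a genuine condition of the quotient rather than one killed by $G\ast H$, and that the incompatible extensions $r$ live off the collapsed coordinate block and project trivially, so that $r\cdot b$ remains in $R_\delta$; continuity of $\pi$ is exactly what turns $r\perp p$ in $P(\mu,\delta)$ into $r\cdot b\perp p$ in $Q^{*}$ without losing membership in the quotient. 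Part (i) is comparatively routine once the cofinality bookkeeping placing $S$ in $E^{j(\kappa)}_\lambda$ is carried out.
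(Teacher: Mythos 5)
Your proof is correct and follows essentially the same route as the paper's: part (i) combines Lemma \ref{diagonallayered}(1) with Lemma \ref{layeredquotient} via the continuity of $\pi$ and the same cofinality computation placing $[\lambda,j(\kappa))\cap\mathrm{Reg}^V$ inside $E^{j(\kappa)}_\lambda$, and part (ii) uses the identical filtration $\bigcup_{\eta<\delta}P(\mu,\eta)$ together with Lemma \ref{singularcase}, the triviality of $\pi$ on conditions supported above $\lambda$, and Lemma \ref{layeredchar}. The only differences are presentational (you argue in $V[G][H]$ where the paper argues with the forcing relation, and you make the requirement $D\subseteq(\lambda,j(\kappa))$ explicit).
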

\begin{proof}[Proof of Claim]
 First, we show (i). By Lemma \ref{diagonallayered}, $j(P)$ is $[\mu,j(\kappa))\cap \mathrm{Reg}$-layered. Note that $[\lambda,j(\kappa))\cap \mathrm{Reg}$ remains stationary in the extension. We also remark that $P \ast \dot{\mathrm{Coll}}(\lambda,<j(\kappa))$ forces $[\lambda,j(\kappa))\cap \mathrm{Reg}^{V}\subseteq E^{j(\lambda)}_{\lambda}$ since $P \ast \dot{\mathrm{Coll}}(\lambda,<j(\kappa))$ has the form of $(\kappa$-c.c.$)\ast(\lambda$-closed$)$. 
 By Lemma~\ref{layeredquotient} and the continuity of $\pi$, $P \ast \dot{\mathrm{Coll}}(\lambda,<j(\kappa)) \force j(P) / \dot{G} \ast \dot{H}$ is $[\lambda,j(\kappa))\cap \mathrm{Reg}^{V}$-layered. 

 For (ii), we let $Q_\delta = \bigcup_{\eta < \delta}P(\mu,\eta)$ and $C\subseteq j(\kappa)$ be a club in Lemma \ref{regularchar}.  $\langle Q_\delta \mid \delta < j(\kappa) \rangle$ is a filtration and $Q_\delta \not\mathrel{\lessdot} j(P)$ for all singular $\delta \in C$ by Lemma \ref{singularcase}. Since $j(\kappa)$ is not Mahlo, there is a club $D \subseteq C$ such that each element in $D$ is singular. Note that it is forced that $\langle Q_\delta/ \dot{G}\ast \dot{H} \mid \delta < j(\kappa) \rangle$ is a filtration of $j(P) / \dot{G}\ast \dot{H}$. By Lemma \ref{layeredchar}, it is enough to show that $\force Q_\delta / \dot{G}\ast \dot{H} \not\mathrel{\lessdot} j(P) / \dot{G}\ast \dot{H}$ for all $\delta \in D$. Fix $\delta \in D$, Lemma \ref{singularcase} gives a $p \in P(\mu,\delta)$ with certain properties. By $\pi(p) = \langle \emptyset,\emptyset\rangle$, we have $\force p \in P(\mu,\delta) / \dot{G}\ast \dot{H}$. We claim that $\force p \in P(\mu,\delta) / \dot{G}\ast \dot{H}$ has no reduct in $Q_\delta/\dot{G} \ast \dot{H}$. 

 For any $b \in P \ast \dot{\mathrm{Coll}}(\lambda,<j(\kappa))$ and $q \in Q_\delta$ with $b \force q \in Q_\delta / \dot{G}\ast \dot{H}$, there is an $r \in {Q_\delta}$ such that $\pi(r) = \langle\emptyset,\emptyset\rangle$, $r \perp p$ in $P(\mu,\delta)$, and, $r \cdot q \in Q_\delta$. Then, the following hold:
 \begin{itemize}
  \item $b \leq \pi(q) = \pi(q) \cdot \langle\emptyset,\emptyset\rangle = \pi(q) \cdot \pi(r) = \pi(q \cdot r)$. 
  \item $b \force q \cdot r \leq q$ in $Q_\delta / \dot{G}\ast \dot{H}$ but $(q \cdot r) \perp p$ in $P(\mu,\delta)/ \dot{G}\ast \dot{H}$.
 \end{itemize}
 Thus, $P \ast \dot{\mathrm{Coll}}(\lambda,<j(\kappa))$ forces that $q \in Q_{\delta} / \dot{G} \ast \dot{H}$ is not reduct of $p$, as desired.
\end{proof}
Item (4) follows from Claim \ref{centeredconclusion}.
\begin{clam}\label{centeredconclusion}
 $P \ast \dot{\mathrm{Coll}}(\lambda,<j(\kappa)) \force j(P) / \dot{G} \ast \dot{H}$ is not $\lambda$-centered. 
\end{clam}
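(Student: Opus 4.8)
The plan is to show that the quotient forcing $j(P)/\dot{G}\ast\dot{H}$ is not $\lambda$-centered by exhibiting, inside the quotient, a copy of a poset that is provably not $\lambda$-centered. The natural candidate comes from the Levy collapses living above $\lambda$ in $j(P)$. Recall that $\pi(p)=\langle p,\emptyset\rangle$ for $p\in P$, and more relevantly that conditions in $j(P)=P(\mu,j(\kappa))$ supported on regular cardinals $\alpha\in[\lambda^{+},j(\kappa))$ project trivially; such conditions therefore lie in the quotient. So I would fix one regular $\alpha$ with $\lambda<\alpha<j(\kappa)$ and focus on the factor $\mathrm{Coll}(\alpha,<j(\kappa))$ of $j(P)$, all of whose conditions (regarded as elements of $j(P)$) are forced into $j(P)/\dot{G}\ast\dot{H}$ because their $\pi$-image is trivial.

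The key step is the following reduction. If the quotient were $\lambda$-centered in $V[G][H]$, then its suborder consisting of conditions coming from $\mathrm{Coll}^{V}(\alpha,<j(\kappa))$ would also be $\lambda$-centered there, since a centering of the whole poset restricts to a centering of any subset. But $V[G][H]$ is an extension of $V$ by a poset of the form $(\kappa\text{-c.c.})\ast(\lambda\text{-closed})$; in particular, since $\lambda^{+}=\alpha$ need not hold, the relevant collapse structure is governed by $\mathrm{Coll}(\lambda,<j(\kappa))$, which makes $\lambda^{+}=j(\kappa)$. I would argue that the $\lambda$-centeredness of $\mathrm{Coll}^{V}(\alpha,<j(\kappa))$ in $V[G][H]$ contradicts Lemma~\ref{levycentered2}. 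Concretely, Lemma~\ref{levycentered2} (applied with inaccessible $j(\kappa)$, the collapsing cardinal $\lambda$ in place of $\kappa$, and $\alpha$ in the range $\lambda<\alpha<j(\kappa)$) says that $\mathrm{Coll}(\lambda,<j(\kappa))$ forces $\mathrm{Coll}^{V}(\alpha,<j(\kappa))$ to be \emph{not} $\lambda$-centered. So the strategy is to verify that $P\ast\dot{\mathrm{Coll}}(\lambda,<j(\kappa))$ adds no $\lambda$-centering of $\mathrm{Coll}^{V}(\alpha,<j(\kappa))$ that could have been added by the $\mathrm{Coll}(\lambda,<j(\kappa))$ factor alone.

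The main obstacle is the bookkeeping needed to pass from $V[G][H]$ back to a pure Levy-collapse extension where Lemma~\ref{levycentered2} applies directly. The forcing $P\ast\dot{\mathrm{Coll}}(\lambda,<j(\kappa))$ is not literally $\mathrm{Coll}(\lambda,<j(\kappa))$: there is the initial $P=P(\mu,\kappa)$ factor, which has size $<\kappa\leq\lambda$ and the $\kappa$-c.c., so it is absorbed harmlessly and does not disturb $\lambda$-closed cardinals above $\lambda$ or the structure of $\mathrm{Coll}^{V}(\alpha,<j(\kappa))$. I would check that $P$ is small enough that $\mathrm{Coll}^{V}(\alpha,<j(\kappa))$ computed in $V[G]$ agrees with that computed in $V$ on the relevant $\alpha$-closed cardinals, so that Lemma~\ref{levycentered2} can be invoked in $V[G]$ with $\mathrm{Coll}(\lambda,<j(\kappa))$ as the outer collapse. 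Combining this with the reduction that centering descends to suborders yields the contradiction, and hence $j(P)/\dot{G}\ast\dot{H}$ is not $\lambda$-centered, completing the proof of item (4).

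\begin{proof}[Proof of Claim]
 We argue in $V[G][H]$ and suppose toward a contradiction that $j(P)/G\ast H$ is $\lambda$-centered. Fix a regular $\alpha$ with $\lambda<\alpha<j(\kappa)$. Conditions $r\in j(P)$ with $\mathrm{supp}(r)\subseteq[\lambda^{+},j(\kappa))\cap\mathrm{Reg}$ satisfy $\pi(r)=\langle\emptyset,\emptyset\rangle$, hence are forced into the quotient; in particular every $q\in\mathrm{Coll}^{V}(\alpha,<j(\kappa))$ (viewed as a condition in $j(P)$ supported on a single regular cardinal $\alpha>\lambda$) belongs to $j(P)/G\ast H$. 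Thus $\mathrm{Coll}^{V}(\alpha,<j(\kappa))$ is a suborder of $j(P)/G\ast H$, and a $\lambda$-centering of the latter restricts to a $\lambda$-centering of the former. Therefore $\mathrm{Coll}^{V}(\alpha,<j(\kappa))$ is $\lambda$-centered in $V[G][H]$.

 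Now $P=P(\mu,\kappa)$ has the $\kappa$-c.c. and size $<\kappa\leq\lambda$, so it adds no new $\alpha$-closed cardinals in the interval $[\alpha^{+},j(\kappa))$ and $\mathrm{Coll}^{V}(\alpha,<j(\kappa))=\mathrm{Coll}^{V[G]}(\alpha,<j(\kappa))$. Working in $V[G]$, the remaining forcing $\mathrm{Coll}(\lambda,<j(\kappa))$ is exactly the Levy collapse, and $j(\kappa)$ is inaccessible with $\lambda<\alpha<j(\kappa)$. By Lemma \ref{levycentered2}, $\mathrm{Coll}(\lambda,<j(\kappa))$ forces that $\mathrm{Coll}^{V[G]}(\alpha,<j(\kappa))$ is not $\lambda$-centered. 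Hence $\mathrm{Coll}^{V}(\alpha,<j(\kappa))$ is not $\lambda$-centered in $V[G][H]$, a contradiction. Therefore $j(P)/G\ast H$ is not $\lambda$-centered.
\end{proof}
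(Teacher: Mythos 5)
Your overall strategy is the right one and close to the paper's, but there is a genuine gap at the point where you invoke Lemma~\ref{levycentered2}. You assert that $\mathrm{Coll}^{V}(\alpha,<j(\kappa))=\mathrm{Coll}^{V[G]}(\alpha,<j(\kappa))$ because $P$ is small. This is false: it is not enough that the index set $[\alpha^{+},j(\kappa))_{\alpha\text{-cl}}$ is unchanged, because the conditions themselves are partial functions built from elements of ${}^{<\alpha}\gamma$, and $P=P(\mu,\kappa)$ (which has size $\kappa$, not $<\kappa$, and collapses all cardinals in $(\mu,\kappa)$ to $\mu$) certainly adds new members of ${}^{<\alpha}\gamma$ --- e.g.\ new subsets of $\kappa<\alpha$. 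So $\mathrm{Coll}^{V[G]}(\alpha,<j(\kappa))$ properly contains $\mathrm{Coll}^{V}(\alpha,<j(\kappa))$, and Lemma~\ref{levycentered2} applied over the ground model $V[G]$ only tells you that the \emph{larger} poset $\mathrm{Coll}^{V[G]}(\alpha,<j(\kappa))$ is not $\lambda$-centered in $V[G][H]$; that says nothing directly about the suborder $\mathrm{Coll}^{V}(\alpha,<j(\kappa))$ that actually sits inside your quotient. (Note also that Lemma~\ref{levycentered2} cannot be applied over $V$ itself, since the collapse actually forced with is $\dot{\mathrm{Coll}}(\lambda,<j(\kappa))$ computed in $V^{P}$, not $\mathrm{Coll}^{V}(\lambda,<j(\kappa))$.)

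The repair is exactly the step the paper inserts: by Lemma~\ref{levyprojection} (term forcing), in $V[G]$ there is a projection from $\mathrm{Coll}^{V}(\alpha,<j(\kappa))$ onto $\mathrm{Coll}^{V[G]}(\alpha,<j(\kappa))$, and $\lambda$-centeredness is preserved by projections. Hence if $\mathrm{Coll}^{V}(\alpha,<j(\kappa))$ were $\lambda$-centered in $V[G][H]$, so would be $\mathrm{Coll}^{V[G]}(\alpha,<j(\kappa))$, and that contradicts Lemma~\ref{levycentered2} applied in $V[G]$. With that bridge added, your first reduction (that a centering of the quotient yields a centering of the copy of $\mathrm{Coll}^{V}(\alpha,<j(\kappa))$ consisting of conditions supported on $\{\alpha\}$) goes through --- though you should note why finitely many such conditions with a common lower bound in the quotient have one \emph{inside} the copy, namely by restricting a lower bound $r\in j(P)$ to its $\alpha$-coordinate. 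The paper organizes this slightly differently, factoring $j(P)\simeq P_{*}\times P^{*}$ and chaining projections $j(P)/\dot G\ast\dot H\to P^{*}\to\mathrm{Coll}^{V}(\alpha,<j(\kappa))\to\mathrm{Coll}^{V^{P}}(\alpha,<j(\kappa))$, but the substance is the same once the missing projection step is supplied.
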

\begin{proof}[Proof of Claim]
 Define 
\begin{itemize}
 \item $P_{\ast} = \prod_{\alpha \in [\mu,\lambda + 1)\cap \mathrm{Reg}}^{<\mu}\mathrm{Coll}(\alpha,<j(\kappa))$ and
 \item $P^{\ast} = {\prod^{<\mu}_{\alpha \in [\lambda + 1,j(\kappa))\cap \mathrm{Reg}}\mathrm{Coll}(\alpha,<j(\kappa))}$.
\end{itemize}
 We have $j(P) \simeq P_* \times P^*$. By the definition of $\pi$, it follows that $P \ast \dot{\mathrm{Coll}}(\lambda,<j(\kappa))$ forces $j(P)/ \dot{G} \ast \dot{H} \simeq (P_* / \dot{G} \ast \dot{H}) \times P^*$. Thus, it suffices that $P \ast \dot{\mathrm{Coll}}(\lambda,<j(\kappa))$ forces $P^{*}$ is not $\lambda$-centered. 

Fix $\alpha \in [\lambda^{+},j(\kappa)) \cap \mathrm{Reg}$. Lemma \ref{levyprojection} shows that $P$ forces $\mathrm{Coll}^{V}(\alpha,<j(\kappa))$ is projected to $\mathrm{Coll}^{V^{P}}(\alpha,<j(\kappa))$. Since $P^{\ast}$ is projected to $\mathrm{Coll}^{V}(\alpha,<j(\kappa))$, in the extension by $P \ast \dot{\mathrm{Coll}}(\lambda,<j(\kappa))$, if $P^{\ast}$ is $\lambda$-centered then so is $\mathrm{Coll}^{V^P}(\alpha,<j(\kappa))$.

 But, by Lemma \ref{levycentered2}, $P \ast \dot{\mathrm{Coll}}(\lambda,<j(\kappa))$ forces $\mathrm{Coll}^{V^{P}}(\alpha,<j(\kappa))$ is not $\lambda$-centered for all $\alpha > \lambda$. In particular, $P \ast \dot{\mathrm{Coll}}(\lambda,<j(\kappa))$ forces that $P^*$ is not $\lambda$-centered.
\end{proof}
This complete the proof. \end{proof}

Magidor~\cite{MR526312} gave a model with a normal and countably complete saturated ideal over $[\aleph_3]^{\aleph_1}$ using the universal collapse. We get the same result using the diagonal product of Levy collapses. Indeed,
\begin{thm}
 Suppose that $j$ is a huge embedding with critical point $\kappa$, GCH holds, and $\mu < \kappa < \lambda < j(\kappa)$ are regular cardinals. Then $P(\mu,\kappa) \ast \dot{\mathrm{Coll}}(\lambda,<j(\kappa))$ forces that there is a normal and $\kappa$-complete ideal ${I}$ over $[j(\kappa)]^{\kappa}$ with the following properties:
\begin{enumerate}
 \item $\mathcal{P}([j(\kappa)]^{\kappa})/I$ has the $(j(\kappa),j(\kappa),<\mu)$-c.c.
 \item $\mathcal{P}([j(\kappa)]^{\kappa})/I$ does not have the $(j(\kappa),\mu,\mu)$-c.c.
 \item $\mathcal{P}([j(\kappa)]^{\kappa})/I$ is $S$-layered poset for some stationary subset $S \subseteq E^{j(\kappa)}_{\lambda}$. In particular, it has a dense subset of size $j(\kappa)$.
 \item $\mathcal{P}([j(\kappa)]^{\kappa})/I$ is not $\lambda$-centered.
\end{enumerate}
\end{thm}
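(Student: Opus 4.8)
The plan is to run the proof of Theorem~\ref{maintheorem} essentially verbatim, making only the three adjustments forced by the stronger hypothesis and the new base set. First I would record what transfers unchanged. Since $j$ is huge with $\mathrm{crit}(j)=\kappa$ we have ${}^{j(\kappa)}M\subseteq M$, and $j(\kappa)$ is not merely inaccessible but measurable, hence Mahlo, in $V$: by elementarity $j(\kappa)$ is measurable in $M$, and $\mathcal{P}(j(\kappa))^{M}=\mathcal{P}(j(\kappa))^{V}$ by the closure of $M$, so an $M$-measure on $j(\kappa)$ is a genuine $V$-measure. As before, $j(P)=P(\mu,j(\kappa))$ has the $j(\kappa)$-c.c., and Lemma~\ref{mainprojection} with $\nu=j(\kappa)$ supplies the same continuous projection $\pi\colon j(P)\to P\ast\dot{\mathrm{Coll}}(\lambda,<j(\kappa))=:Q$ with $\pi(p)=\langle p,\emptyset\rangle$ on $P(\mu,\kappa)$. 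Thus the quotient to be analysed is again $j(P)/\dot G\ast\dot H$, literally the forcing studied in Theorem~\ref{maintheorem}.

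Next I would construct the ideal. Forcing $Q$ to obtain $G\ast H$, I take $\overline G$ a $(V,j(P))$-generic with $\pi``\overline G\subseteq G\ast H$, lifting $j$ to $j\colon V[G]\to M[\overline G]$ with $j(G)=\overline G$. The decisive change is the seed. Because $\mathrm{Coll}^{M[\overline G]}(j(\lambda),<jj(\kappa))$ is $j(\lambda)$-directed closed and $|j``H|=j(\kappa)<j(\lambda)$ (here is where $\kappa<\lambda$ is used), the coordinate-wise union $m$ of $j``H$ is a single master condition; below $m$ I lift $j$ to $j\colon V[G][H]\to M[\overline G][\overline H]$ and set $U=\{X\subseteq[j(\kappa)]^{\kappa}\mid j``j(\kappa)\in j(X)\}$. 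Since $|j``j(\kappa)|=j(\kappa)$, the closure ${}^{j(\kappa)}M\subseteq M$ is exactly what places the seed in the target, so $U$ is a normal, fine, $\kappa$-complete $V[G][H]$-ultrafilter on $[j(\kappa)]^{\kappa}$; I then define $I$ from a $j(P)/\dot G\ast\dot H$-name $\dot U$ for $U$ as before, the $j(\kappa)$-c.c. of $j(P)$ giving $j(\kappa)$-saturation.

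Then I would establish the duality $j(P)/G\ast H\simeq\mathcal{P}([j(\kappa)]^{\kappa})/I$ by repeating Claim~\ref{duality}: the map $X\mapsto\|X\in\dot U\|$ is a dense embedding, with density witnessed by representing each $q\in j(P)\cap V_{\beta}$ (for inaccessible $\beta<j(\kappa)$) as $q=[x]_{U_{\alpha}}$ through the factor $k\colon N=\mathrm{Ult}(V[G][H\!\upharpoonright\!\alpha],U_{\alpha})\to M[\overline G][\overline H\!\upharpoonright\! j(\alpha)]$ with $\mathrm{crit}(k)\geq\alpha$. Granting the duality, items (1), (2) and (4) are literally Claims~\ref{ccconclusion} and~\ref{centeredconclusion} for the same quotient $j(P)/G\ast H$, and item (3) is Claim~\ref{layeredconclusion}(i), which now applies unconditionally because $j(\kappa)$ is Mahlo; layeredness then yields the dense subset of size $j(\kappa)$.

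The step I expect to be the main obstacle is this duality, and specifically the passage from $m$ to a $j(P)/\dot G\ast\dot H$-name $\dot U$. The base set $[j(\kappa)]^{\kappa}$ has cardinality $j(\kappa)$, so under GCH there are $2^{j(\kappa)}=j(\kappa)^{+}$ subsets to classify, whereas in Theorem~\ref{maintheorem} the base $\mathcal{P}_{\kappa}\lambda$ has size $\lambda<j(\kappa)$ and only $j(\kappa)$ subsets occur. Consequently the descending-chain construction used there—building $\langle s_{\alpha}\mid\alpha<j(\kappa)\rangle$ deciding an enumeration of all names—cannot be run, since a chain of length $j(\kappa)^{+}$ need not exist in the target collapse when $\lambda=\kappa^{+}$ and $j(\lambda)=j(\kappa)^{+}$. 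This is precisely what forces the detour above: use one directed master condition $m$ to lift through the collapse, and then rely entirely on the factor-embedding representation of Claim~\ref{duality}, which never enumerates subsets, to show that $\dot U$ is in fact a $j(P)/\dot G\ast\dot H$-name and that $\tau$ is dense. Verifying that this representation survives with the new seed $j``j(\kappa)$—in particular that $\mathrm{crit}(k)\geq\alpha$ still follows from the cardinal structure of $N$—is where the argument must be checked with care.
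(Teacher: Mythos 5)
Your overall architecture matches the paper's: reduce everything to the isomorphism $\mathcal{P}([j(\kappa)]^{\kappa})/I\simeq j(P)/G\ast H$, quote Claims \ref{ccconclusion}, \ref{layeredconclusion} and \ref{centeredconclusion} for the quotient, and use the single master condition $m=\bigcup j``H$, which exists by hugeness (${}^{j(\kappa)}M\subseteq M$ puts $j``H$ into $M[\overline{G}]$). However, your diagnosis of ``the main obstacle'' is mistaken, and your proposed detour around it opens a genuine gap. The descending-chain construction \emph{can} be run, and the paper runs it: since $\lambda>\kappa$, we have $j(\lambda)\geq j(\kappa^{+})=j(\kappa)^{+}$, so $\mathrm{Coll}^{M[\overline{G}]}(j(\lambda),<jj(\kappa))$ is $j(\kappa)^{+}$-directed closed in $V[\overline{G}]$ (directed sets of size $\leq j(\kappa)$ lie in $M[\overline{G}]$ by the closure of $M[\overline{G}]$ and have lower bounds there). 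Building a chain $\langle s_{\alpha}\mid\alpha<j(\kappa)^{+}\rangle$ below $m$ never requires a lower bound for the whole chain, only for its proper initial segments, each of size $\leq j(\kappa)$; and by GCH there are only $2^{j(\kappa)}=j(\kappa)^{+}$ names $\dot{X}_{\alpha}$ to decide. This is exactly where the hypotheses $\kappa<\lambda$ and GCH are consumed.

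Your replacement --- lift $j$ to $V[G][H]$ through a generic $\overline{H}\ni m$ and set $U=\{X\mid j``j(\kappa)\in j(X)\}$ --- does not yield what you then assert, namely a $j(P)/\dot{G}\ast\dot{H}$-name $\dot{U}$. The set $U$ so defined depends on the choice of $\overline{H}$, not only on $\overline{G}$: the single condition $m$ does not decide $j``j(\kappa)\in j(\dot{X})$ for a given $X$, and different generics below $m$ produce different ultrafilters. Hence your $U$ lives in $V[\overline{G}][\overline{H}]$ and is named only by the two-step iteration $j(P)/G\ast H\ast\dot{\mathrm{Coll}}^{M[\overline{G}]}(j(\lambda),<jj(\kappa))$, which breaks the isomorphism $\mathcal{P}([j(\kappa)]^{\kappa})/I\simeq j(P)/G\ast H$ on which items (1)--(4) all depend; the factor-embedding argument of Claim \ref{duality} presupposes that $\dot{U}$ is already a $j(P)/G\ast H$-name and cannot repair this. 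The entire point of the descending chain is to decide every statement $j``j(\kappa)\in j(\dot{X}_{\alpha})$ \emph{inside} $V[\overline{G}]$, so that $U$ is determined by $\overline{G}$ alone. To fix your write-up, delete the detour and restore the chain of length $j(\kappa)^{+}$ below $m$, justified by the $j(\kappa)^{+}$-directed closure above.
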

\begin{proof}
 Let $P = P(\mu,\kappa)$. Let $G \ast H$ be a $(V,P \ast \dot{\mathrm{Coll}}(\lambda,<j(\kappa)))$-generic. By the proof of Theorem \ref{maintheorem}, it is enough to construct an ideal $I$ over $\mathcal{P}([j(\kappa)]^{\kappa}) / I$ with $\mathcal{P}([j(\kappa)]^{\kappa}) / I \simeq j(P) / G \ast H$. 

 Let $\pi:j(P) \to P \ast \dot{\mathrm{Coll}}(\lambda,<j(\kappa))$ be a projection in Lemma \ref{mainprojection} and $\overline{G}$ be an arbitrary $(V,j(P))$-generic with $\pi `` \overline{G} \subseteq G \ast H$. In $V[\overline{G}]$, $j$ lifts to $j:V[G] \to M[\overline{G}]$. By the GCH, we can choose a list $\langle X_{\alpha} \mid \alpha < j(\kappa)^{+} \rangle$ of $P \ast \dot{\mathrm{Coll}}(\lambda,<j(\kappa))$-names of subset of $[j(\kappa)]^{\kappa}$. Note that ${^{j(\kappa)}}M[\overline{G}] \cap V[\overline{G}] \subseteq M[\overline{G}]$. By $\lambda > \kappa$ and the closure property of $M[\overline{G}]$, $\mathrm{Coll}^{M[\overline{G}]}(j(\lambda),<jj(\kappa))$ is $j(\kappa)^{+}$-directed closed in $V[\overline{G}]$. Let $m$ be the coordinate-wise union of $j ``H$. Then we have $m \in \mathrm{Coll}^{M[G]}(j(\lambda),<jj(\kappa))$ by $|j ``H| = j(\kappa)$. We can construct a decending sequence $\langle s_{\alpha} \mid \alpha < j(\kappa)^{+} \rangle$ such that 
\begin{itemize}
 \item $s_{\alpha} \leq m$.
 \item $s_{\alpha}$ decides $j``j(\kappa) \in j(\dot{X}_{\alpha})$. 
\end{itemize}
 Let $U = \{X \subseteq [j(\kappa)]^{\kappa} \mid s_{\alpha} \force j``j(\kappa) \in X_{\alpha}\}$. There is a $j(P)/{G} \ast {H}$-name $\dot{U}$ for ${U}$. In $V[G][H]$, let $I$ be an induced ideal by $\dot{U}$, that is, $X \in I$ if and only if $j(P) / G \ast H \force [j(\kappa)]^{\kappa} \setminus X \in \dot{U}$. $I$ works as witness.
\end{proof}

\begin{rema}
In the proof of Theorem \ref{maintheorem}, we proved that it is forced by $P(\mu,\kappa) \ast \dot{\mathrm{Coll}}(\lambda,<j(\kappa))$ that $P^{\ast}$ is not $\lambda$-centered. On the other hand, $P_{\ast} / \dot{G} \ast \dot{H}$ is $\lambda$-centered. Indeed, in the extension by $P \ast \dot{\mathrm{Coll}}(\lambda,<j(\kappa))$, we have 
\begin{center}
 $P_{\ast} = (\prod^{<\mu}_{\alpha \in [\mu,\lambda +1)\cap \mathrm{Reg}} \mathrm{Coll}(\alpha,<j(\kappa)))^{V} = \prod^{<\mu}_{\alpha \in [\mu,\lambda+1)\cap \mathrm{Reg}^V} \mathrm{Coll}^{V}(\alpha,<j(\kappa))$.
\end{center} Remark \ref{remarklevy} and Lemma 4 in~\cite{MR925267} show $P_{\ast}$ is $\lambda$-centered. By the proof of Lemma~\ref{centeredquotient}, we have that ${P \ast \dot{\mathrm{Coll}}(\lambda,<j(\kappa))}$ forces $P_\ast / \dot{G} \ast \dot{H}$ is $\lambda$-centered, as desired.
\end{rema}

We conclude this paper with the following question. 
\begin{ques}
 Is it consistent that there is an ideal over $\aleph_1$ which is $\aleph_2$-saturated but not $(\aleph_2,\aleph_2,2)$-saturated?
\end{ques}
Note that every $\aleph_2$-saturated ideal over $\aleph_1$ is $(\aleph_2,\aleph_1,2)$-saturated under the $\mathrm{CH}$. It is known that $\mathrm{CH}$ implies the partition relation $\aleph_2 \to (\aleph_2,\aleph_1)$ (See Theorem 3,10 in~\cite{MR2768681}). That is, for every $c:[\aleph_2]^{2} \to 2$, if there is no $H_0 \in [\aleph_2]^{\aleph_2}$ with $c ``[H_0]^2 = \{0\}$ then there is an $H_1 \in [\aleph_2]^{\aleph_1}$ with $c ``[H_1]^{2} = \{1\}$. For a $\aleph_2$-saturated ideal $I$ over $\aleph_1$ and $I$-positive sets $\{A_\xi \mid \xi < \aleph_2\}$, define $c:[\aleph_2]^{2} \to 2$ by $c(\xi,\zeta) = 0 \leftrightarrow A_\xi \cap A_{\zeta} \in I$. By the saturation of $I$, there is no $H_0 \in [\aleph_2]^{\aleph_2}$ with $c ``[H_0]^2 = \{0\}$. We can take an $H_1 \in [\aleph_2]^{\aleph_1}$ with $c ``[H_1]^{2} = 1$. For every $\xi < \zeta$ in $H_1$,  by $c(\xi,\zeta) = 1$, $A_\xi \cap A_{\zeta} \not\in I$.

   \bibliographystyle{plain}
   \bibliography{reference}

\begin{thebibliography}{10}

\bibitem{MR3911105}
Sean Cox.
\newblock Layered posets and {K}unen's universal collapse.
\newblock {\em Notre Dame J. Form. Log.}, 60(1):27--60, 2019.

\bibitem{MR942519}
M.~Foreman, M.~Magidor, and S.~Shelah.
\newblock Martin's maximum, saturated ideals and nonregular ultrafilters. {II}.
\newblock {\em Ann. of Math. (2)}, 127(3):521--545, 1988.

\bibitem{MR2768692}
Matthew Foreman.
\newblock Ideals and generic elementary embeddings.
\newblock In {\em Handbook of set theory. {V}ols. 1, 2, 3}, pages 885--1147.
  Springer, Dordrecht, 2010.

\bibitem{MR925267}
Matthew Foreman and Richard Laver.
\newblock Some downwards transfer properties for {$\aleph_2$}.
\newblock {\em Adv. in Math.}, 67(2):230--238, 1988.

\bibitem{MR2768681}
Andr\'{a}s Hajnal and Jean~A. Larson.
\newblock Partition relations.
\newblock In {\em Handbook of set theory. {V}ols. 1, 2, 3}, pages 129--213.
  Springer, Dordrecht, 2010.

\bibitem{MR1994835}
Akihiro Kanamori.
\newblock {\em The higher infinite}.
\newblock Springer Monographs in Mathematics. Springer-Verlag, Berlin, second
  edition, 2003.
\newblock Large cardinals in set theory from their beginnings.

\bibitem{MR495118}
Kenneth Kunen.
\newblock Saturated ideals.
\newblock {\em J. Symbolic Logic}, 43(1):65--76, 1978.

\bibitem{MR673792}
Richard Laver.
\newblock An {$(\aleph _{2},\,\aleph _{2},\,\aleph _{0})$}-saturated ideal on
  {$\omega _{1}$}.
\newblock In {\em Logic {C}olloquium '80 ({P}rague, 1980)}, volume 108 of {\em
  Stud. Logic Foundations Math.}, pages 173--180. North-Holland, Amsterdam-New
  York, 1982.

\bibitem{MR526312}
Menachem Magidor.
\newblock On the existence of nonregular ultrafilters and the cardinality of
  ultrapowers.
\newblock {\em Trans. Amer. Math. Soc.}, 249(1):97--111, 1979.

\bibitem{MR850051}
Saharon Shelah.
\newblock {\em Around classification theory of models}, volume 1182 of {\em
  Lecture Notes in Mathematics}.
\newblock Springer-Verlag, Berlin, 1986.

\bibitem{shioya}
Masahiro Shioya.
\newblock A new saturated filter.
\newblock {\em RIMS Kokyuroku}, 1595:63--69, 2008.

\bibitem{MR4159767}
Masahiro Shioya.
\newblock Easton collapses and a strongly saturated filter.
\newblock {\em Arch. Math. Logic}, 59(7-8):1027--1036, 2020.

\bibitem{MR2355670}
Stevo Todorcevic.
\newblock {\em Walks on ordinals and their characteristics}, volume 263 of {\em
  Progress in Mathematics}.
\newblock Birkh\"{a}user Verlag, Basel, 2007.

\end{thebibliography}

\end{document}